\journal{Journal of Functional Analysis}
\begin{document}

\newtheorem{theorem}{Theorem}
\newtheorem{acknowledgment}[theorem]{Acknowledgment}
\newtheorem{corollary}[theorem]{Corollary}
\newtheorem{definition}[theorem]{Definition}
\newtheorem{assumption}[theorem]{ Complementary Assumptions }
\newtheorem{example}[theorem]{Example}
\newtheorem{lemma}[theorem]{Lemma}
\newtheorem{notation}[theorem]{Notation}
\newtheorem{problem}[theorem]{Problem}
\newtheorem{proposition}[theorem]{Proposition}
\newtheorem{question}[theorem]{Question}
\newtheorem{remark}[theorem]{Remark}
\newtheorem{setting}[theorem]{Setting}
\newtheorem{assertion}[theorem]{Assertion}
\newproof{proof}{Proof}

\numberwithin{theorem}{section}
\numberwithin{equation}{section}

% \numberwithin{theorem}{section}
% \numberwithin{equation}{section}

\newcommand{\1}{{\bf 1}}
\newcommand{\Ad}{{\rm Ad}}
\newcommand{\Aut}{{\rm Aut}\,}
\newcommand{\ad}{{\rm ad}}
\newcommand{\botimes}{\bar{\otimes}}
\newcommand{\Ci}{{\mathcal C}^\infty}
\newcommand{\Der}{{\rm Der}\,}
\newcommand{\de}{{\rm d}}
\newcommand{\ee}{{\rm e}}
\newcommand{\End}{{\rm End}\,}
\newcommand{\ev}{{\rm ev}}
\newcommand{\GL}{{\rm GL}}
\newcommand{\Gr}{{\rm Gr}}
\newcommand{\hotimes}{\widehat{\otimes}}
\newcommand{\id}{{\rm id}}
\newcommand{\ie}{{\rm i}}
\newcommand{\gl}{{{\mathfrak g}{\mathfrak l}}}
\newcommand{\Ker}{\ker}%{{\rm Ker}\,}
\newcommand{\Lie}{\text{\bf L}}
\newcommand{\pr}{{\rm pr}}
\newcommand{\Ran}{{\rm Ran}\,}
\renewcommand{\Re}{{\rm Re}\,}
\newcommand{\spa}{{\rm span}\,}
\newcommand{\Tr}{{\rm Tr}\,}
\newcommand{\U}{{\rm U}}

\newcommand{\CC}{{\mathbb C}}
\newcommand{\HH}{{\mathbb H}}
\newcommand{\RR}{{\mathbb R}}
\newcommand{\TT}{{\mathbb T}}

\newcommand{\Ac}{{\mathcal A}}
\newcommand{\Bc}{{\mathcal B}}
\newcommand{\Cc}{{\mathcal C}}
\newcommand{\Dc}{{\mathcal D}}
\newcommand{\Ec}{{\mathcal E}}
\newcommand{\Fc}{{\mathcal F}}
\newcommand{\Hc}{{\mathcal H}}
\newcommand{\Jc}{{\mathcal J}}
\newcommand{\Kc}{{\mathcal K}}
\newcommand{\Mc}{{\mathcal M}}
\newcommand{\Nc}{{\mathcal N}}
\newcommand{\Oc}{{\mathcal O}}
\newcommand{\Pc}{{\mathcal P}}
\newcommand{\Sc}{{\mathcal S}}
\newcommand{\Tc}{{\mathcal T}}
\newcommand{\Vc}{{\mathcal V}}
\newcommand{\Uc}{{\mathcal U}}
\newcommand{\Xc}{{\mathcal X}}
\newcommand{\Yc}{{\mathcal Y}}
\newcommand{\Gc}{{\mathcal G}}
\newcommand{\Rc}{{\mathcal R}}

\newcommand{\Ag}{{\mathfrak A}}
\newcommand{\Bg}{{\mathfrak B}}
\newcommand{\Fg}{{\mathfrak F}}
\newcommand{\Gg}{{\mathfrak G}}
\newcommand{\Ig}{{\mathfrak I}}
\newcommand{\Jg}{{\mathfrak J}}
\newcommand{\Lg}{{\mathfrak L}}
\newcommand{\Mg}{{\mathfrak M}}
\newcommand{\Pg}{{\mathfrak P}}
\newcommand{\Sg}{{\mathfrak S}}
\newcommand{\Xg}{{\mathfrak X}}
\newcommand{\Yg}{{\mathfrak Y}}
\newcommand{\Zg}{{\mathfrak Z}}

\newcommand{\ag}{{\mathfrak a}}
\newcommand{\bg}{{\mathfrak b}}
\newcommand{\dg}{{\mathfrak d}}
\renewcommand{\gg}{{\mathfrak g}}
\newcommand{\hg}{{\mathfrak h}}
\newcommand{\kg}{{\mathfrak k}}
\newcommand{\mg}{{\mathfrak m}}
\newcommand{\n}{{\mathfrak n}}
\newcommand{\og}{{\mathfrak o}}
\newcommand{\pg}{{\mathfrak p}}
\newcommand{\sg}{{\mathfrak s}}
\newcommand{\tg}{{\mathfrak t}}
\newcommand{\ug}{{\mathfrak u}}
\newcommand{\zg}{{\mathfrak z}}

\newcommand{\ZZ}{\mathbb Z}
\newcommand{\NN}{\mathbb N}
\newcommand{\BB}{\mathbb B}
\newcommand{\FF}{\mathbb F}

\newcommand{\ap}{\rightarrow}
\newcommand{\tto}{\rightrightarrows}

\def\dis{\displaystyle}

\def\ap{\rightarrow}
 \def\a{\alpha}
\def\b{\beta}
\def\g{\gamma}
\def\G{\Gamma}
\def\D{\triangle}
\def\d{\delta}
\def\o{\omega}
\def\th{\theta}
\def\L{\Lambda}
\def\n{\nu}
\def\di{\diamond}
\def\r{\rho}
\def\ps{\psi}
\def\m{\mu}
\def\n{\nu}
\def\s{\sigma}
\def\sou{\overline}
\def\so{\underline}
\def\na{\nabla}
\def\O{\Omega}
\def\T{\mathbf{T}}
\def\V{\mathbf{V}}
\def\H{\mathbf{H}}

%\begin{document}

\begin{frontmatter}

\title{%Locally transitive  %ordered 
	Banach-Lie groupoids and generalized inversion
	%in $C^*$-algebras
}
%{On some Banach-Lie groupoids and algebroids associated to $C^*$-algebras}
%{Some  properties of Lie algebroids and Lie groupoids in Banach framework}

%% Group authors per affiliation:
\author{Daniel Belti\c t\u a}
\ead{beltita@gmail.com}
\address{\small Institute of Mathematics ``S. Stoilow'' of the Romanian Academy, 21 Calea Grivitei Street, 010702 Bucharest, Romania}

\author{Tomasz Goli\'nski}
\ead{tomaszg@math.uwb.edu.pl}
% \address{\small University in Bia{\l}ystok, Institute of Mathematics, Cio{\l}kowskiego 1M, 15-245~Bia{\l}ystok, Poland}

\author{Grzegorz Jakimowicz}
\ead{g.jakimowicz@uwb.edu.pl}
\address{\small University in Bia{\l}ystok, Institute of Mathematics, Cio{\l}kowskiego 1M, 15-245~Bia{\l}ystok, Poland}

\author{Fernand Pelletier}
\ead{fernand.pelletier@univ-smb.fr}
\address{\small Unit\'e Mixte de Recherche 5127 CNRS, Universit\'e  de Savoie Mont Blanc, Laboratoire de Math\'ematiques (LAMA),Campus Scientifique,  73370 Le Bourget-du-Lac, France}

\begin{abstract} 
We study a few basic properties of Banach-Lie groupoids and algebroids,  
adapting some  classical results on  finite dimensional Lie groupoids. 
As an illustration of the general theory, 
we show that the notion of locally transitive Banach-Lie groupoid sheds fresh light on earlier research on some 
infinite-dimensional manifolds associated with Banach algebras.
%differential geometric features of $C^*$-algebras. 
\end{abstract}

\begin{keyword}
Banach manifold \sep Lie groupoid \sep Lie algebroid \sep Moore-Penrose pseudo-inverse
\MSC[2010] 22A22 \sep 58H05 \sep 46L05
\end{keyword}

\end{frontmatter}

% \linenumbers

\setcounter{tocdepth}{1}
\tableofcontents

\section{Introduction}

The theory of Banach-Lie groups (i.e., infinite-dimensional Lie groups modeled on Banach spaces) is  a rather old and well-developed research area that is interesting on its own and also for its applications to many problems in functional analysis, differential geometry, or mathematical physics. 
There exist however several topics whose natural background requires Banach manifolds endowed with an algebraic structure that is more general than the notion of group. 
We will briefly mention here very few references on some of these topics that 
can be better understood from the perspective of Banach-Lie groupoids, which is the main theme of our present paper: 
\begin{itemize}
	\item Moore-Penrose pseudo-inverses in $C^*$-algebras 
	and their differentiability properties  
	(cf. %\cite{AC04}, 
	\cite{ACM05}, \cite{Boa06}, \cite{ArCG08}, \cite{LeRo12}, \cite{ArM13}); 
	\item Poisson structures on the predual of a $W^*$-algebra 
	(cf. \cite{OJS15} and \cite{OS11}); 
	\item Banach-Lie algebroids (cf. \cite{An11}, \cite{CaPe}, \cite{Pe}).
\end{itemize}

Motivated by the above research directions, and also by the impressive development of the theory of finite-dimensional Lie groupoids (see for instance \cite{Ma}, \cite{MoMr}, \cite{CraFe}) we think it worthwhile to develop the basic theory of Banach-Lie groupoids and to illustrate it by a brief discussion on its relation to some differentiability questions in the theory of $C^*$-algebras. 
More specifically, the contents of this paper are as follows. 
Section~\ref{Sect2} collects some basic notions on differential geometry and on topological groupoids that we need. In Section~\ref{Sect3} we introduce the notion of (not-necessarily-Hausdorff) Banach-Lie groupoid 
along with several examples. 
The main result here (Theorem~\ref{5.4}) concerns the differentiability properties of the orbits, extending the classical results on actions of Banach-Lie groups. 
In Section~\ref{Sect4} we extend the Lie functor from Banach-Lie groups to Banach-Lie groupoids (Theorem~\ref{algbroidgroupoid}) and we establish the link between the orbits of a split Banach-Lie groupoid and the orbits of its corresponding Banach-Lie algebroid (Theorem~\ref{linkorbitG}). 
In Section~\ref{Sect5} we obtain a Banach-Lie groupoid version of the fact that for every Banach-Lie group there exists a simply connected Banach-Lie group with the same Lie algebra (Theorem~\ref{ssymply}). 
Section~\ref{Sect6} is devoted to the study of locally transitive Banach-Lie groupoids, a class of groupoids that play a central role in the description of the differentiability properties of the Moore-Penrose pseudo-inverse in $C^*$-algebras. 
Among other things, we provide several equivalent characterizations of the groupoids of this type (Theorem~\ref{transitive}) 
and we study the Atiyah bundles associated to Banach principal bundles (Proposition~\ref{EG}). 
Finally, in Section~\ref{Sect7} we briefly illustrate the general theory developed so far,  
by the locally transitive Banach-Lie groupoids associated to unital Banach algebras. 
As mentioned above, this provides a natural framework for some results that were established in the earlier literature in connection with the Moore-Penrose pseudo-inverse.

\section{Preliminaries and notation}\label{Sect2}

\subsection{The context of not-necessarily-Hausdorff Banach manifolds}\label{subsect-nnH}%${}$\\
The terminology in the literature on infinite-dimensional manifolds is not uniform, so we mention that 
our general references are \cite[Ch. II--III]{La} or \cite{Bo09}. 
%See also \cite{KrMi} for the convenient calculus.   
For the sake of clarity and for later reference in this paper, we briefly recall here some classical notions on not-necessarily-Hausdorff manifolds modeled on Banach spaces.

A $C^\infty$-{\it atlas}   on a set $M$   is a family $\{(U_\alpha, u_\alpha) \}_{\alpha\in A}$ of subsets $U_\alpha$ of $M$ 
and maps~$u_\alpha$ from $U_\alpha$ to a Banach space $\mathbb{M}_\a$ such that:
\begin{itemize}
\item  $u_\alpha$ is a bijection of $U_\alpha$ onto a open  subset of   $\mathbb{M}_\alpha$ for all $\alpha\in A$;

\item $M=\displaystyle\bigcup_{\alpha\in A}U_\alpha$;

\item for any $\alpha$ and $\beta$ such that  $U_{\alpha\beta}=U_\alpha\cap U_\beta\not=\emptyset$, 
then $u_{\alpha\beta}=u_\alpha\circ u_\beta^{-1}:u_\beta(U_{\alpha\beta})\subset \mathbb{M}_\beta\longrightarrow u_\alpha(U_{\alpha\beta})\subset \mathbb{M}_\alpha$ is a smooth map.
\end{itemize}
As usual, one has the notion of equivalent  $C^\infty$-atlases on  $M$. 
An equivalence class of $C^\infty$-atlases on $M$ 
%is a maximal  $C^\infty$-atlas.
%Such an atlas 
defines a topology on $M$ which in general fails to have the Hausdorff property.

\begin{definition}\label{Banach} 
\normalfont
%A maximal  $C^\infty$-atlas on $M$  
An equivalence class of $C^\infty$-atlases
is called a \emph{not-necessarily-Hausdorff Banach manifold structure} on $M$, 
for short a \emph{n.n.H. Banach manifold}. 
This structure is called a \emph{Hausdorff Banach manifold  structure} on $M$ 
(for short a \emph{Banach manifold}  as in \cite{Bo09}) if the topology defined by this atlas is a Hausdorff topology.
\end{definition}

It follows by the above definition that all Banach spaces $\mathbb{M}_\a$ are topologically isomorphic on every connected component of~$M$. 
If all connected  components of $M$ are modelled on a fixed Banach space $\mathbb{M}$ (up to an isomorphism)  
then we will say that $M$ is a \emph{pure Banach manifold}.  
A  \emph{pure Banach component} of   a   Banach manifold $M$ is  a pure  Banach manifold  which is a union
$${N}=\bigcup_{\alpha\in A}M_\alpha$$ 
of connected components of $M$. 
We say that $N$ is a \emph{maximal pure (n.n.H.) Banach component} if for any connected components  $M_\lambda$ of $M$ such that $M_\lambda\cap N=\emptyset$ then $M_\lambda$ is modelled on a Banach space $\mathbb{M}_\lambda$ which is not isomorphic to the model space of~$N$.

\begin{remark}\label{nnH}  
\normalfont 
If $M$ is a n.n.H. Banach manifold, then its corresponding topology is~$T_1$ hence  each finite subset of $M$ is closed. 
Moreover, since $M$ is locally Hausdorff, by Zorn's Lemma,  there exists a maximal open dense subset $M_0$ of $M$ which is an open Banach manifold in $M$.  
(See for instance \cite[Lemma 4.2]{BaGa}.)
\end{remark}

It is clear that the classical construction of the tangent bundle $TM$ of a Banach manifold $M$ 
can be applied to a n.n.H. Banach manifold and so we get again a n.n.H. Banach manifold $TM$ which is a Banach manifold 
if and only if $M$ is. 

%All     $C^k$ piecewise paths, ($k\geq 0$)   in a n.n.H. Banach manifold  {\it will be defined on  $[0,1]$}. If $\g$ and $\g'$ are two paths  the {\it concatenation $\g\star\g'$} is the path defined by

%1441
%$\g\star\g(t)=\g(2t) \textrm{ for } 0\leq t\leq \frac{1}{2}$.\\

A smooth map between n.n.H. Banach manifolds $\varphi\colon N\to M$ is called a  \emph{weak immersion} if its tangent map $T_x\varphi:T_xN\ap T_{\varphi(x)}M$ is injective 
for every $x\in N$. 
If $\varphi$ is a weak immersion for which the range of its tangent map $T_x\varphi$ is a closed subspace of $T_{\varphi(x)}N$ for every $x\in N$, then $\varphi$ is called an \emph{immersion}. 
If $\varphi$ is an immersion for which  the range of $T_x\varphi$ is a split subspace of $T_{\varphi(x)}M$ 
(that is, there exists a closed linear subspace $\Vc$ for which one has the direct sum decomposition $T_{\varphi(x)}M=(T_x\varphi)(T_xN)\oplus\Vc$), 
then $\varphi$ is called a \emph{split immersion}. 
We emphasize that this terminology is not generally used in the literature, since for instance the split immersions in the above sense are called immersions in \cite[5.7.1]{Bo09}. 
See \cite{MaOu92} and \cite{Gl15} for additional information.

An  \emph{immersed} (resp., weakly or split) \emph{n.n.H. Banach submanifold} 
%${N}$ 
of  a n.n.H. Banach manifold $M$ is an injective immersion (resp., weak or split immersion) $\iota:{N}\ap M$. 
% such that $T_x\iota:T_xN\ap T_{\iota(x)}M$ is injective. 
An  immersed n.n.H. Banach manifold 
%${N}$  
$\iota:{N}\ap M$
is called a \emph{closed submanifold } (resp. \emph{split submanifold}) if $\iota(N)$ is a closed subset of $M$ 
(resp. $\iota$ is a split immersion).
%(resp. $T\iota(T_xN)$ is complemented in  $T_{\iota(x)}M$ for all $x\in N$). 

A \emph{closed split submanifold} of a n.n.H. Banach manifold $M$ will called simply a \emph{submanifold} of $M$ and then the corresponding split immersion $\iota\colon N\to M$ is usually thought of as an inclusion map $N\hookrightarrow M$.

A  \emph{submersion} $p:N\ap M$ between two n.n.H. Banach manifolds is a surjective smooth map such that $Tp(T_xN)=T_{p(x)}M$ and $\ker T_xp$ is 
%complemented in 
a split subspace of 
$T_xN$ for each $x\in N$. 
A  smooth map $f :N\ap M$ between two n.n.H. Banach manifolds is a \emph{subimmersion} 
if  for each $x\in N$ there exist an open neighborhood $ U $ of $x$, a Banach manifold $P$, 
a submersion $s :U \ap P$, and an immersion $ j : P \ap M$ such that $f\vert_{ U} = j \circ s$. 
If $f$ is a subimmersion, then $f^{-1}(y)$ is a submanifold of $N$ for each $y\in M$.

%All     $C^k$ piecewise paths, ($k\geq 0$)   in a n.n.H. Banach manifold  {\it will be defined on  $[0,1]$}. If $\g$ and $\g'$ are two paths  the {\it concatenation $\g\star\g'$} is the path defined by

%$\g\star\g(t)=\g(2t) \textrm{ for } 0\leq t\leq \frac{1}{2}$.\\

By locally trivial fibration  we mean a  submersion ${\bf p} \colon N\ap M$ such that for every $x\in M$ there exist an open neighborhood $U$ of $x$ and a diffeomorphism $\Phi\colon{\bf p}^{-1}(U)\ap U\times {\bf p}^{-1}(x)$ such that $p_1\circ\Phi={\bf p}$, where $p_1=U\times {\bf p}^{-1}(x)\ap U$ is the canonical projection. 
If the basis $M$ is connected then all the fibers are diffeomorphic. 

A {\it  n.n.H Banach  bundle} is a locally trivial fibration  $\pi : \mathcal{A}\ap M$ whose  fiber is a Banach space. 
%\marginpar{Should we also assume that the coordinate changes are fiberwise linear in order to have a vector bundle?}
Of course $\mathcal{A}$ is Hausdorff if and only if $M$ is so. 
On each connected component  $\mathcal{A}_\alpha$ of $\mathcal{A}$ the fibers are isomorphic to a common Banach space (called {\it the typical fiber}) but this space can change from one connected component to another. 
The (n.n.H) Banach bundle $\pi : \mathcal{A}\ap M$ is called {\it pure} if its fibers  have the same typical fiber and if $M$ is a pure Banach manifold. 
In particular, if $\pi:\mathcal{A}\ap M$ is a Banach bundle, each connected component  $\mathcal{A}_\alpha$ of $\mathcal{A}$ gives rise to a pure (n.n.H) Banach bundle $\pi : \mathcal{A}_\alpha\ap \pi(\mathcal{A}_\alpha)$.  For example the tangent bundle $TM$ of a (n.n.H) Banach manifold $M$ is a Banach bundle  and the tangent bundle $TM_\alpha\ap M_\alpha$ is a pure Banach bundle for each connected component  $M_\alpha$ of $M$.
%\footnote{Note that $\mathcal{A}$    a  (Hausdorff)  Banach manifold if and only if  the basis  $M$ is so.} 

Notions like n.n.H. Banach bundle morphisms, n.n.H. Banach bundle isomorphisms, n.n.H.Banach subbundles etc.,  are  defined as usual.

The algebra of smooth maps $f:M\ap \RR$ will be denoted $C^\infty(M)$, the $C^\infty(M)$-module of smooth sections of a bundle 
$\pi : \mathcal{A}\ap M$ will be denoted $\G(\mathcal{A})$, and the $C^\infty(M)$-module of smooth vector fields on $M$ will be denoted $\Xi(M)$.

\subsection{Topological groupoids}
A \emph{topological groupoid} $\mathcal{G}\tto M$ is a pair $(\mathcal{G},{M})$   of topological spaces such that  $\mathcal{G}$ may not be Hausdorff but $M$ is Hausdorff,  
%and one has the following %continuous structure maps:
with the following structure maps: 
\begin{itemize}
\item[(G1)]  
Two   surjective open continuous   maps
${\bf s}:  \mathcal{G}\ap M$ and ${\bf t}: \mathcal{G}\ap M$ 
called \emph{source} and \emph{target} maps, respectively.\footnote{ Each point $g\in G$ can be regarded as an arrow 
$g : {\bf s}(g)\ap{\bf t}(g)$ which joins ${\bf s}(g)$ to ${\bf t}(g)$.}

\item[(G2)] 
A continuous map  ${\bf m}:\mathcal{G}^{(2)}\ap\mathcal{G}$, where $\mathcal{G}^{(2)}:=\{(g,h)\in\mathcal{G}\times\mathcal{G}\mid {\bf s}(g) = {\bf t}(h)\}$ is provided with the induced topology from the  product topology on $\mathcal{G}\times \mathcal{G}$, 
 called a \emph{multiplication} denoted ${\bf m}(g,h)=gh$ and satisfying an associativity relation  in the sense that the product $(gh)k$ is
defined if and only if $ g(hk)$  is defined and in this case we must have $(gh)k = g(hk)$.
% 
% If $\mathcal{G}^{(2)}:=\{(g,h)\in\mathcal{G}\times\mathcal{G}\mid {\bf s}(g) = {\bf t}(h)\}$ provided with the induced topology from the  product topology on $\mathcal{G}\times \mathcal{G}$, one has a continuous map
%  ${\bf m}:\mathcal{G}^{(2)}\ap\mathcal{G}$, 
%  called a \emph{multiplication} denoted ${\bf m}(g,h)=gh$ and satisfying an associativity relation  in the sense that the product $(gh)k$ is
% defined if and only if $ g(hk)$  is defined and in this case we must have $(gh)k = g(hk)$.

\item[(G3)] A continuous  embedding ${\bf 1} : M \ap\mathcal{G}$ called \emph{identity section} which satisfies
$g{\bf 1}_x = g$ for all $g\in{\bf s}^{-1}(x)$, and ${\bf 1}_x g = g$ for all $g\in{\bf t}^{-1}(x)$ 
(what in particular implies ${\bf s}\circ{\bf 1}=\id_{M}={\bf t}\circ{\bf 1}$).

\item[(G4)]  A homeomorphism  ${\bf i} : \mathcal{G}\ap \mathcal{G}$,  
denoted ${\bf i}(g)=g^{-1}$ called \emph{inversion}, which satisfies
$gg^{-1}={\bf 1}_{{\bf t}(g)}$, $\;g^{-1}g={\bf 1}_{{\bf s}(g)}$ (what in particular implies ${\bf s}\circ{\bf i}={\bf t}$, $\; {\bf t}\circ {\bf i}={\bf s}$).
\end{itemize}
 The space $ M$ is  called {\it the base} of the groupoid, and $\mathcal{G}$ 
 % (not  necessary a Hausdorff topological space), 
is called the \emph{total space} of the groupoid. 
(See for instance \cite[Def. 1.3]{CraFe}.)

For any $x,y\in M$, we denote $\mathcal{G}(x,-):={\bf s}^{-1}(x)$,  $\mathcal{G}(-,y):={\bf t}^{-1}(y)$, and 
$$\mathcal{G}(x,y):=\{g\in\mathcal{G}\mid {\bf s}(g)=x,\ {\bf t}(g)=y\}=\mathcal{G}(x,-)\cap\mathcal{G}(-,y).$$
The \emph{isotropy group} at $x\in M$ is the set 
$$\mathcal{G}(x)=\{ g\in \mathcal{G}\mid {\bf s}(g)=x={\bf t}(g)\}=\mathcal{G}(x,x)\subseteq\Gc$$
and the {\it  orbit} of $x\in M$ is the set
$$\mathcal{G}.x=\{{\bf t}(g)\mid g\in {\bf s}^{-1}(x)\}={\bf t}(\mathcal{G}(x,-))\subseteq M.$$
For any $g\in \mathcal{G}$, if ${\bf s}(g)=x$ and ${\bf t}(g)=y$, 
then we define its corresponding left translation 
$$L_g: \mathcal{G}(-,x)\ap \mathcal{G}(-,y), \quad h\mapsto gh,$$ and similarly the right translation $R_g:\mathcal{G}(x,-)\ap \mathcal{G}(y,-)$, $h\mapsto hg$. 
Each of these maps $L_g$ and $R_g$ is a  homeomorphism.

The topological groupoid $\Gc\tto M$ is called \emph{{\bf s}-connected} if its {\bf s}-fibers $\Gc(x,-)$ are connected for all $x\in M$, and
\emph{{\bf s}-simply connected} if all its {\bf s}-fibers are connected and simply connected.
 
The topological groupoid is called  \emph{transitive} if the map
 $({\bf s},{\bf t}): \mathcal{G}\ap M\times M$
 is surjective. 
 
 A {\it  topological morphism between  topological groupoids} $ \mathcal{G}\tto M$ and $\mathcal{H}\tto N$ is given by a pair of  continuous maps $\Phi:\mathcal{G}\ap \mathcal{H}$ and $\phi:M\ap N$ which are compatible with the structure maps, that is:
 \begin{itemize}
 	\item 
$\phi({\bf s}(g))={\bf s}(\Phi(g))$, $\;\;\phi({\bf t}(g))={\bf t}(\Phi(g))$  and  $\Phi(g^{-1})=\Phi(g)^{-1}$ for all $g\in \mathcal{G}$;
\item if $(g,g')\in\Gc^{(2)}$ then %$(\Phi(g),\Phi(g'))\in\Hc^{(2)}$ and 
$\Phi(gg')=\Phi(g)\Phi(g')$; 
 \item  $\Phi({\bf 1}_x)={\bf 1}_{\phi(x)}$ for every $x\in M$.
 \end{itemize}
Note that $\phi$ is uniquely determined by $\Phi$ and therefore a morphism between the groupoids $ \mathcal{G}\tto M$ and $\mathcal{H}\tto N$ is given only by 
a continuous map $\Phi:\mathcal{G}\ap \mathcal{H}$ which satisfies the above compatibility conditions.

A {\it subgroupoid} of $\mathcal{G}\tto M$  is a groupoid  $\mathcal{H}\tto N$ such that  $\mathcal{H}\subset \mathcal{G}$ 
and the inclusion $\iota:\mathcal{H}\ap \mathcal{G}$ is a topological morphism of groupoids.  
A subgroupoid $\mathcal{H}\tto N$ of $\mathcal{G}\tto M$ is called a {\it wide subgroupoid}  if $N=M$

%%%%%%%%%%%%%%%%%%%%%%%%%%%%%%%%%%%%%%%%%%%%%%%%%%%%%%%%%%%%%
\subsection{Some technical lemmas}
%%%%%%%%%%%%%%%%%%%%%%%%%%%%%%%%%%%%%%%%%%%%%%%%%%%%%%%%
We will use the following results which are essentially extracted from the collection of Bourbaki's books. 

\begin{lemma}\label{smooth1}
A continuous left action of a n.n.H. topological group on n.n.H.  topological space $G\times X\to X$, $(g,x)\mapsto g.x$,  
is proper if and only if it satisfies the following condition: 
For every net $\{(g_j,x_j)\}_{j\in J}$ in $G\times X$ for which there exists $\lim\limits_{j\in J}(g_j.x_j,x_j)=:(b,a)\in X\times X$, 
there also exists $\lim\limits_{j\in J}g_j=:g\in G$ and $g.a=b$. 
\end{lemma}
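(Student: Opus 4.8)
The plan is to prove both implications directly from the definition of a proper action, namely that the map $\Theta\colon G\times X\to X\times X$, $\Theta(g,x)=(g.x,x)$ is a proper map (i.e., the inverse image of every compact set is compact, or equivalently, since we are in a not-necessarily-Hausdorff setting, that $\Theta$ is universally closed with compact — possibly non-Hausdorff — fibers). Because we are working with topological spaces that need not be first countable, nets rather than sequences are the right tool; the relevant Bourbaki input is the characterization of proper maps in terms of the behavior of nets (a continuous map $f\colon Y\to Z$ is proper if and only if for every net $\{y_j\}$ in $Y$ such that $\{f(y_j)\}$ converges to a point $z$, the net $\{y_j\}$ has a cluster point in $f^{-1}(z)$), together with the fact that a group action map $\Theta$ as above always has the extra rigidity that the fibers $\Theta^{-1}(b,a)$ are singletons whenever nonempty (if $g.a=b$ then $g$ is uniquely determined), so "cluster point in the fiber" upgrades to "limit equal to the unique fiber point."

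First I would prove the easy direction ($\Rightarrow$): assume the action is proper and let $\{(g_j,x_j)\}_{j\in J}$ be a net with $\lim_j(g_j.x_j,x_j)=(b,a)$, i.e. $\lim_j\Theta(g_j,x_j)=(b,a)$. By the net characterization of properness, the net $\{(g_j,x_j)\}$ has a cluster point $(g,x)\in\Theta^{-1}(b,a)$; in particular $g.x=b$ and $x=a$, so $g.a=b$. It remains to see that $g_j\to g$ (not merely clusters at $g$): since $\{x_j\}$ converges to $a$ and $\{g_j.x_j\}$ converges to $b$, any cluster point $(g',a)$ of $\{(g_j,x_j)\}$ again satisfies $g'.a=b=g.a$, and by free-ness of the uniqueness of the group element acting (namely $g'=g'.a\cdot a^{-1}$ in the groupoid-of-the-action sense, or more elementarily $g'=(g_j g_j^{-1})$-limit arguments), one gets $g'=g$; hence $\{(g_j,x_j)\}$ has the unique cluster point $(g,a)$. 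To pass from "unique cluster point" to "limit" I would invoke that the relevant net lives eventually in a suitable compact (hence, on the Hausdorff locus, limit-unique-implies-convergent) region coming once more from properness of $\Theta$ applied to a compact neighborhood of $(b,a)$ — this is where I would be slightly careful about the non-Hausdorff total space, using local Hausdorffness of $G$ and of $X$.

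For the converse ($\Leftarrow$), assume the net condition and let $K\subseteq X\times X$ be compact; I must show $\Theta^{-1}(K)$ is compact, equivalently that every net $\{(g_j,x_j)\}$ in $\Theta^{-1}(K)$ has a cluster point in $\Theta^{-1}(K)$. Since $K$ is compact, after passing to a subnet we may assume $\Theta(g_j,x_j)=(g_j.x_j,x_j)$ converges to some $(b,a)\in K$. The hypothesis then gives $\lim_j g_j=g\in G$ with $g.a=b$, whence $\{(g_j,x_j)\}$ converges to $(g,a)$, and $\Theta(g,a)=(b,a)\in K$, so $(g,a)\in\Theta^{-1}(K)$. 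Thus $\Theta^{-1}(K)$ is compact and the action is proper. (One should also record that $\Theta$ is a closed map, which follows from the same net argument together with the fact that in the n.n.H. setting "proper" is taken in Bourbaki's sense of universally closed with quasi-compact fibers; the fiber over $(b,a)$ is a single point, hence trivially quasi-compact.)

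The main obstacle I anticipate is purely book-keeping around the not-necessarily-Hausdorff hypothesis on $G$ and $X$: in a Hausdorff space a net with a unique cluster point that lives in a compact set converges, but without global Hausdorffness one must localize, using that $G$ and $X$ are locally Hausdorff (indeed locally Banach), restrict the net to a Hausdorff open neighborhood of the candidate limit, and only there deduce convergence. Once that localization is set up, both implications are immediate translations of the Bourbaki net criterion for proper maps, and the uniqueness of the group element realizing $g.a=b$ is what makes the statement clean. I do not foresee any genuinely hard analytic content — the lemma is essentially a restatement of properness adapted to the topological category at hand.
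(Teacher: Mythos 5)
Your ``if'' direction (net condition implies proper) is sound in substance: the condition forces every fibre $\Theta^{-1}(b,a)=\{(g,a)\mid g.a=b\}$ to be quasi-compact (a net $(g_j,a)$ in the fibre has constant image $(b,a)$, so by hypothesis it converges in the fibre) and forces $\Theta$ to be closed, which by Bourbaki's criterion (closed with quasi-compact fibres) gives properness in the universally-closed sense; this is also the only direction the paper actually uses for non-free actions. Note that the paper's own proof of the lemma is nothing more than a pointer to the remark following Bourbaki, Ch.~III, \S 4, no.~1, Def.~1, so there is no detailed argument to compare with; the substance of that remark is the cluster-point/ultrafilter characterization of proper maps.

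The ``only if'' direction as you wrote it has a genuine gap. To pass from ``the net $\{(g_j,x_j)\}_{j\in J}$ has a cluster point $(g,a)$ in the fibre'' to ``$\lim_{j}g_j$ exists'' you argue that the cluster point is unique because any two cluster points $g,g'$ satisfy $g.a=g'.a$ and hence coincide ``by free-ness.'' Freeness is not a hypothesis of the lemma; without it the fibre $\Theta^{-1}(b,a)$ need not be a singleton, and the implication as literally stated fails: for a nontrivial compact group acting (properly) on a one-point space, an alternating net $\{g_j\}$ has $(g_j.x_j,x_j)$ constant, yet $\lim_j g_j$ does not exist. Moreover, even when the cluster point is unique, ``unique cluster point implies convergence'' requires the net to lie eventually in a quasi-compact set; your appeal to a compact neighborhood of $(b,a)$ presupposes local compactness, which neither a general n.n.H.\ topological space nor an infinite-dimensional Banach manifold enjoys. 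What Bourbaki's remark actually yields is that $\{g_j\}$ admits a \emph{cluster point} $g$ with $g.a=b$ (equivalently, the condition holds with limits if one restricts to ultrafilters/universal nets). That cluster-point version is exactly what your argument establishes, and it suffices for every use of this lemma in the paper, where the ``only if'' direction is only ever applied to free actions; but it is not a proof of the statement as written.
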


\begin{proof}
See the comment after \cite[Ch. III, \S 4, no. 1, Def. 1]{Bo71}. 
\end{proof}

\begin{lemma}\label{popertop}
Let $G\times X\to X$, $(g,x)\mapsto g.x$ be a continuous  proper action  of n.n.H. topological group on a  n.n.H. topological space.
Then the quotient space $G\setminus X$ is a Hausdorff topological space. Moreover, if $G$ is Hausdorff, then $X$ is also Hausdorff.
\end{lemma}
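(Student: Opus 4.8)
The plan is to prove both statements by reducing them to standard facts about quotients by group actions, exploiting the equivalence from Lemma~\ref{smooth1}.

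First I would address the Hausdorffness of the quotient $G\setminus X$. Recall that for a continuous action, $G\setminus X$ is Hausdorff if and only if the image $R$ of the map $G\times X\to X\times X$, $(g,x)\mapsto(g.x,x)$, is closed in $X\times X$ (this is the standard criterion; the orbit equivalence relation being closed is equivalent to the quotient being Hausdorff since the quotient map $X\to G\setminus X$ is open, because the action is by homeomorphisms). So the key step is to show $R$ is closed. Take a net $(b,a)$ in $X\times X$ that is a limit of points $(g_j.x_j,x_j)\in R$. By Lemma~\ref{smooth1}, properness gives $\lim_j g_j=g\in G$ and $g.a=b$, hence $(b,a)=(g.a,a)\in R$. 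Thus $R$ is closed and $G\setminus X$ is Hausdorff. (Here I must be slightly careful that the open-quotient-plus-closed-graph argument works in the n.n.H. setting: the quotient map is open because $G$ acts by homeomorphisms, so $G\cdot U$ is open for open $U$; and closedness of the graph of the equivalence relation together with openness of the quotient map yields the $T_2$ property of the quotient by the usual diagonal argument.)

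Next I would treat the second assertion: if $G$ is Hausdorff, then $X$ is Hausdorff. The cleanest route is again through Lemma~\ref{smooth1}. Since $X$ is locally Hausdorff (being a n.n.H. space built from an atlas, or more generally $T_1$ and locally Hausdorff as in Remark~\ref{nnH} — though here $X$ is just a topological space, so I would instead use that the action is proper), it suffices to show the diagonal $\Delta_X$ is closed in $X\times X$. Let $(a,b)$ be the limit of a net $(x_j,x_j)$ of diagonal points; trivially this forces... — that is vacuous. Instead, the correct statement to prove is: a net in $X$ has at most one limit. Suppose $x_j\to a$ and $x_j\to b$. Apply Lemma~\ref{smooth1} with the constant sequence $g_j={\bf 1}$ (the identity of $G$): then $(g_j.x_j,x_j)=(x_j,x_j)\to(b,a)$, so properness yields a limit $g$ of the constant net $\{{\bf 1}\}$ with $g.a=b$. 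Since $G$ is Hausdorff, the constant net $\{{\bf 1}\}$ has the unique limit ${\bf 1}$, so $g={\bf 1}$ and hence $b={\bf 1}.a=a$. Therefore limits in $X$ are unique, i.e., $X$ is Hausdorff.

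The main obstacle is purely the first part — verifying rigorously that for an \emph{open} quotient map, closedness of the orbit relation $R\subseteq X\times X$ is equivalent to (here, sufficient for) the quotient being Hausdorff, all in the not-necessarily-Hausdorff category. One must check the quotient map $q\colon X\to G\setminus X$ is open: for open $U\subseteq X$, $q^{-1}(q(U))=G\cdot U=\bigcup_{g\in G}g\cdot U$ is open since each $g$ acts homeomorphically, so $q(U)$ is open. Then if $q(a)\neq q(b)$, the pair $(a,b)\notin R$; since $R$ is closed there is a basic open box $U\times V$ around $(a,b)$ disjoint from $R$, and $q(U),q(V)$ are then disjoint open neighbourhoods separating $q(a)$ and $q(b)$ — disjointness follows because $(G\cdot U)\times V\cap R=\emptyset$ forces $q(U)\cap q(V)=\emptyset$. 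This is routine but is the one place where care about openness (rather than mere continuity) of $q$ is essential, and it is where I would spend the bulk of the written proof. Everything else is a direct application of Lemma~\ref{smooth1}.
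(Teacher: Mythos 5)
Your proof is correct. Note, however, that the paper does not actually prove this lemma: its ``proof'' is a one-line citation to \cite[Ch.~III, \S 4, no.~2, Prop.~3]{Bo71}, so you have in effect reconstructed the Bourbaki argument that the authors delegate to the literature. Your reconstruction is sound on both points. For the quotient, you correctly reduce Hausdorffness of $G\setminus X$ to closedness of the orbit relation $R$ plus openness of the quotient map (openness because $q^{-1}(q(U))=\bigcup_{g\in G}g\cdot U$ is a union of homeomorphic images of $U$), and closedness of $R$ is exactly what the net criterion of Lemma~\ref{smooth1} delivers; the diagonal box argument separating $q(a)$ and $q(b)$ is the standard one and you state it accurately. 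For the second claim, your trick of applying Lemma~\ref{smooth1} to the constant net $g_j=\1$ is exactly right, and you correctly handle the one genuinely delicate point in the n.n.H.\ setting: a net may have several limits, so the conclusion $a=b$ requires knowing that the limit $g$ furnished by properness must equal $\1$, which is precisely where Hausdorffness of $G$ enters (the constant net $\1$ has $\1$ as its unique limit). The only blemish is the digression in your second paragraph about the diagonal being ``vacuously'' closed --- in fact closedness of $\Delta_X$ \emph{is} equivalent to Hausdorffness, and your final argument (a net $(x_j,x_j)\to(a,b)$ means $x_j\to a$ and $x_j\to b$, whence $a=b$) is exactly the correct proof of that closedness --- but this is a stylistic detour, not a gap.
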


\begin{proof} 
See \cite[Ch. III, \S 4, no. 2, Prop. 3]{Bo71}.
\end{proof}

As noted in \cite[Ch. III, \S 1, no. 5, Prop. 10]{Bo72} in the case of finite-dimensional manifolds and in \cite[Th. I]{Gl15} for Banach manifolds acted on by Banach-Lie groups, 
the freeness hypothesis in \eqref{smooth2_item1} of the following lemma ensures that 
the tangent map of $\rho(x)$ is injective for every $x\in X$. 

\begin{lemma}\label{smooth2}
Let $G\times X\to X$, $(g,x)\mapsto g.x$ be a smooth action of a Banach-Lie group on a n.n.H. Banach manifold, 
satisfying the following conditions: 
\begin{enumerate}[(a)]
\item\label{smooth2_item1} The action is free and proper. 
\item\label{smooth2_item2} For every $x\in X$ the map $\rho(x)\colon G\to X$, $g\mapsto g.x$ is a split immersion. 
% at $\1\in G$. 
\end{enumerate} 
Then the quotient topological space $G\setminus X$ has the Hausdorff property and has the unique structure of a Banach manifold 
for which the quotient map $q\colon X\to G\setminus X$ is a submersion. 
Moreover $X$ is Hausdorff and $(X,G\setminus X,G,\pi)$ is a  Banach  principal $G$-bundle. 
\end{lemma}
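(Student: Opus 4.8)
\emph{Approach.} The plan is to deduce the topological assertions from the technical lemmas already proved and then to build a $C^\infty$-atlas on $G\setminus X$ by a slice construction, exactly as in the finite-dimensional case but carrying the splitting hypotheses through so that the inverse function theorem for Banach manifolds applies. Since every Banach-Lie group is Hausdorff, Lemma~\ref{popertop} applied to the proper action of hypothesis~\eqref{smooth2_item1} already gives that $X$ is Hausdorff and that $G\setminus X$ is Hausdorff; from now on we regard $X$ as a Banach manifold and write $q\colon X\to G\setminus X$ for the quotient map, which is open since $q^{-1}(q(V))=G\cdot V$ for every open $V\subseteq X$. Now fix $x_0\in X$. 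By \eqref{smooth2_item1} the orbit map $\rho(x_0)$ is injective and by \eqref{smooth2_item2} it is a split immersion, so $T_e\rho(x_0)$ maps $T_eG$ isomorphically (as topological vector spaces) onto a closed complemented subspace of $T_{x_0}X$; fix a closed complement $\Vc$ and, using a chart of $X$ at $x_0$, a submanifold $S$ through $x_0$ with $T_{x_0}S=\Vc$. The smooth map $\Psi\colon G\times S\to X$, $\Psi(g,s)=g\cdot s$, then has invertible differential at $(e,x_0)$ by the choice of $\Vc$, so by the inverse function theorem there are open sets $e\in U\subseteq G$ and $x_0\in S'\subseteq S$ such that $\Psi|_{U\times S'}$ is a diffeomorphism onto an open neighbourhood of $x_0$ in $X$.

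\emph{The slice.} The key step is to shrink $S'$ to an open $S''\ni x_0$ with the \emph{slice property}: if $g\cdot s_1=s_2$ with $s_1,s_2\in S''$ then $g=e$. Suppose no such $S''$ exists. Then there are nets $(s_j^{(1)})$, $(s_j^{(2)})$ in $S'$ converging to $x_0$ and $g_j\in G\setminus\{e\}$ with $g_j\cdot s_j^{(1)}=s_j^{(2)}$. Hence $(g_j\cdot s_j^{(1)},s_j^{(1)})\to(x_0,x_0)$, so by Lemma~\ref{smooth1} (properness) a subnet of $(g_j)$ converges to some $g\in G$ with $g\cdot x_0=x_0$, whence $g=e$ by freeness. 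But then $g_j\in U$ eventually, and $\Psi(g_j,s_j^{(1)})=g_j\cdot s_j^{(1)}=s_j^{(2)}=\Psi(e,s_j^{(2)})$ with all arguments in $U\times S'$, so injectivity of $\Psi|_{U\times S'}$ forces $g_j=e$, a contradiction. This is the only place where properness is genuinely needed, and together with keeping the splitting conditions valid through the inverse function theorem it is the essential difficulty of the proof (this is in substance \cite[Th.~I]{Gl15}).

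\emph{Charts on $G\setminus X$.} Given a slice $S''$, the map $\theta\colon G\times S''\to X$, $(g,s)\mapsto g\cdot s$, is injective (by freeness and the slice property) and a local diffeomorphism at every point: near $(g,s)$ one has $\theta(g',s')=L_g\bigl(\Psi(g^{-1}g',s')\bigr)$, where $L_g$ is the diffeomorphism $x\mapsto g\cdot x$ of $X$. Hence $\theta$ is open, and therefore a diffeomorphism of $G\times S''$ onto the open $G$-saturated set $G\cdot S''\subseteq X$. Consequently $q$ restricts to a homeomorphism of $S''$ onto the open subset $q(G\cdot S'')$ of $G\setminus X$. As $x_0$ ranges over $X$ these maps cover $G\setminus X$ and form a $C^\infty$-atlas: if $S_0,S_1$ are two slices with overlapping images, the transition map is the composite $S_0\cap(G\cdot S_1)\hookrightarrow G\cdot S_1\xrightarrow{\theta_1^{-1}}G\times S_1\xrightarrow{\pr_2}S_1$, which is smooth. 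By the first paragraph the resulting manifold $G\setminus X$ is Hausdorff, and in these charts $q$ is identified with the projection $G\times S''\to S''$, so $q$ is a submersion (its vertical kernel $\ker T_xq=T_x(G\cdot x)$ is split by \eqref{smooth2_item2}). Uniqueness of this structure is the standard fact that a map from the total space of a surjective submersion which is constant on the fibres descends to a smooth map on the base: if both manifold structures make $q$ a submersion, then $\id_{G\setminus X}$ is smooth in both directions.

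\emph{The principal bundle.} Finally, the diffeomorphisms $\theta^{-1}\colon G\cdot S''\xrightarrow{\ \sim\ }G\times S''\cong G\times q(G\cdot S'')$ intertwine the left $G$-action on $X$ with left translation on the $G$-factor and cover $q$, so they are local trivializations of $q$; since the action is free, this exhibits $q\colon X\to G\setminus X$ as a locally trivial Banach principal $G$-bundle, whose transition cocycle is given by the $G$-valued maps read off from the overlaps $\theta_1^{-1}\circ\theta_0$. This is the asserted bundle $(X,G\setminus X,G,\pi)$ with $\pi=q$.
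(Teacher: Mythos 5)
Your proof is correct, and the topological part (Hausdorffness of $X$ and of $G\setminus X$ via Lemma~\ref{popertop} and the Hausdorffness of Banach--Lie groups) is exactly what the paper does. The difference is in the main step: the paper simply cites Bourbaki (\cite[Ch.~III, \S 1, no.~5, Prop.~10]{Bo72}) for the existence and uniqueness of the quotient manifold structure and the principal bundle property, whereas you prove it from scratch by the slice construction --- inverse function theorem applied to $\Psi(g,s)=g\cdot s$ using the splitting hypothesis~\eqref{smooth2_item2}, shrinking to a genuine slice via properness (Lemma~\ref{smooth1}) and freeness, and then reading off the charts, the submersion property, and the local trivializations. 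Your version is self-contained and makes visible exactly where each hypothesis (freeness, properness, the split-immersion condition) enters, at the cost of reproving a result the paper is content to outsource; the substance of the argument is the same as in the cited proposition.
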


\begin{proof} 
%It follows by \cite[Ch. III, \S 1, no. 5, Prop. 9(i)]{Bo72} and our hypothesis \eqref{smooth2_item1} that 
%for every $x\in X$ the map $\rho(x)\colon G\to X$, $g\mapsto g.x$ is an immersion.  
Any Banach-Lie group is Hausdorff by 
\cite[Ch. III, \S 2, no. 6, Prop. 18(a)]{Bo71}. 
Then we can use Lemma~\ref{popertop} to obtain that both $G\setminus X$ and $X$ are Hausdorff. 
Finally, it follows by \cite[Ch. III, \S 1, no. 5, Prop. 10]{Bo72} that $G\setminus X$  has the unique structure of a Banach manifold 
for which the quotient map $q\colon X\to G\setminus X$ is a submersion. 
\end{proof}

\begin{lemma}\label{smooth3}
Let $Z\subseteq X\subseteq Y$ be n.n.H. Banach manifolds. 
If $Z\subseteq Y$ is a submanifold and $X\subseteq Y$ is a submanifold, 
then also $Z\subseteq X$ is a submanifold. 
\end{lemma}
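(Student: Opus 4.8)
The plan is to verify directly that the inclusion $j\colon Z\to X$ exhibits $Z$ as a closed split submanifold of $X$. Write $\iota_Z\colon Z\to Y$ and $\iota_X\colon X\to Y$ for the two given submanifold inclusions, so that $\iota_X\circ j=\iota_Z$. Closedness of $j(Z)$ in $X$ is immediate: $\iota_Z(Z)$ is closed in $Y$, it is contained in $\iota_X(X)$, and the manifold topology of $X$ refines the topology induced on $\iota_X(X)$ from $Y$. The substance of the lemma is thus that $j$ is a split immersion, and this will be reduced to the smoothness of $j$.

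Granting that $j$ is smooth, differentiate $\iota_X\circ j=\iota_Z$ to get $T_z\iota_X\circ T_zj=T_z\iota_Z$ for every $z\in Z$; injectivity of $T_z\iota_Z$ forces injectivity of $T_zj$. Since $\iota_X$ is a split immersion, $T_z\iota_X$ is a topological isomorphism of $T_zX$ onto the closed complemented subspace $\Ran T_z\iota_X\subseteq T_zY$, and $\Ran T_z\iota_Z=\Ran(T_z\iota_X\circ T_zj)\subseteq\Ran T_z\iota_X$. Under the identification $T_zX\cong\Ran T_z\iota_X$ the range $\Ran T_zj$ becomes $\Ran T_z\iota_Z$, which is closed in $T_zY$ (as $\iota_Z$ is a split immersion), hence closed in $\Ran T_z\iota_X$; and it is complemented there, since if $T_zY=\Ran T_z\iota_Z\oplus F$ with $F$ closed, then $\Ran T_z\iota_X=\Ran T_z\iota_Z\oplus(F\cap\Ran T_z\iota_X)$, the summands being closed and the decomposition automatically topological by the open mapping theorem. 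Hence $j$ is a split immersion, and together with the closedness this makes $Z$ a submanifold of $X$.

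To prove that $j$ is smooth I would work locally. Fix $z_0\in Z$ and apply the local normal form for the split immersion $\iota_X$ at $z_0$ (see \cite{Bo09}): there are a chart $\psi$ of $X$ about $z_0$ and a chart $\phi$ of $Y$ about $z_0$ such that $\phi\circ\iota_X\circ\psi^{-1}$ is the inclusion $w\mapsto(w,0)$ of an open subset of $\mathbb{X}$ into a split sum $\mathbb{X}\oplus\mathbb{W}$. Thus $\iota_X$ maps $U:=\mathrm{dom}\,\psi$ onto the slice $\{y:\pr_{\mathbb{W}}(\phi(y))=0\}$, homeomorphically for the topology induced from $Y$, and $r:=\psi^{-1}\circ\pr_{\mathbb{X}}\circ\phi$ is a smooth retraction of the domain of $\phi$ onto $U$ with $r\circ\iota_X=\id_U$. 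Consequently, if some neighborhood $N$ of $z_0$ in $Z$ satisfies $j(N)\subseteq U$, then $j=r\circ\iota_Z$ on $N$, a composite of smooth maps, so $j$ is smooth near $z_0$. Everything thus reduces to the continuity of $j$ at $z_0$.

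This continuity is the main obstacle, and it is here that one must use that $Z$ is a submanifold — not merely a subset — of $Y$ lying inside $X$. The local normal form for $\iota_Z$ at $z_0$ gives a connected neighborhood $N_0$ of $z_0$ in $Z$ which $\iota_Z$ embeds into $Y$ as a slice $\iota_Z(N_0)$; this set is connected, passes through $z_0$, and lies in $\iota_X(X)$, whose local model at $z_0$ is the slice $\iota_X(U)$. One must confine $\iota_Z(N_0)$, after shrinking, to $\iota_X(U)$: a connectedness argument does this, using that $\iota_X(U)$ is closed in the domain of $\phi$, that $\iota_Z(N_0)\subseteq\iota_X(X)$, and — to prevent the germ of $Z$ at $z_0$ from running onto a different local sheet of $\iota_X(X)$ — closedness of $X$ and $Z$ in $Y$. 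Once $\iota_Z(N_0)\subseteq\iota_X(U)$ we get $j(N_0)\subseteq U$, which is precisely the continuity needed, and the proof closes.
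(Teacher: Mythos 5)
Your closedness observation and your split-immersion step are fine, and the latter is in fact exactly the paper's argument: smoothness of $j$ gives injectivity of $T_zj$ by the chain rule, and the complement of $T_zZ$ in $T_zX$ is obtained by intersecting a complement of $T_zZ$ in $T_zY$ with $T_zX$. Where you diverge from the paper is in how smoothness of $j$ is obtained: the paper simply invokes \cite[5.8.5]{Bo09} (the statement that a smooth map into $Y$ with values in a submanifold $X$ is smooth into $X$), whereas you attempt to reprove that result via a local retraction $r$ with $r\circ\iota_X=\id_U$. The retraction argument correctly reduces everything to the continuity of $j$ at $z_0$, i.e.\ to confining $\iota_Z(N_0)$ inside the single slice $\iota_X(U)$.

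That last step is where your proof has a genuine gap. Your proposed connectedness argument would need the set $A=\{z\in N_0 : \iota_Z(z)\in\iota_X(U)\}$ to be both closed \emph{and open} in $N_0$; closedness is easy, but openness is precisely the assertion that no other local sheet of $\iota_X(X)$ accumulates at points of $\iota_X(U)$ near $z_0$, and neither the closedness of $\iota_X(X)$ nor that of $\iota_Z(Z)$ in $Y$ rules this out. With the paper's definition of submanifold (an injective split immersion with closed image, not required to be a topological embedding), a figure-eight--type immersion has closed image yet exhibits exactly the accumulation of a second sheet at a point that your sketch must exclude; so the phrase ``a connectedness argument does this'' is not a proof. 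To close the gap you must use that $\iota_X$ is locally a homeomorphism onto its image with the induced topology -- that is, the embedded (Bourbaki) notion of submanifold, which is what \cite[5.8.5]{Bo09} presupposes and what the paper tacitly relies on when citing it. Either cite that result as the paper does, or state explicitly that the submanifolds are embedded and derive the continuity of $j$ from the fact that the topology of $X$ is then the topology induced from $Y$.
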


\begin{proof}
It follows by \cite[5.8.5]{Bo09} that the inclusion map $\iota\colon Z\hookrightarrow X$ is smooth. 
Then the map $\iota$ is an immersion as a direct consequence of the hypothesis. 
It remains to show that for every $z\in Z$ the subspace $T_zZ\subseteq T_zX$ is 
%complemented. 
split. 
To this end, using that $Z\subseteq Y$  is a submanifold, we can find a closed linear subspace $\Vc\subseteq T_zY$ with 
$T_zZ\oplus\Vc=T_zY$. Since $T_zZ\subseteq T_zX\subseteq T_zY$, it then follows that 
$T_zZ\oplus(\Vc\cap T_zX)=T_zX$, and this completes the proof. 
\end{proof}

\begin{remark}
\normalfont
The proof of Lemma~\ref{smooth3} uses only the fact that the subspace $T_xX\subseteq T_xY$ is closed, 
but not that it is
%complemented
a split subspace, for $y\in Y$.  
However the splitting condition 
%on existence of complements 
seems to be necessary in order to be able to use \cite[5.8.5]{Bo09}.  
\end{remark}

%%%%%%%%%%%%%%%%%%%%%%%%%%%%%%%%%%%%%%%%%%%%%%%%%%%%%%%%%%%%
%%%%%%%%%%%%%%%%%%%%%%%%%%%%%%%%%%%%%%%%%%%%%%%%%%%%%%%%%%%
%%%%%%%%%%%%%%%%%%%%%%%%%%%%%%%%%%%%%%%%%%%%%%%%%%%%%%%%%%%%

%%%%%%%%%%%%%%%%%%%%%%%%%%%%%%%%%%%%%%%%%%%%%%%%%%%%%%%%%%%%%%%%%
%%%%%%%%%%%%%%%%%%%%%%%%%%%%%%%%%%%%%%%%%%%%%%%%%%%%%%%%%%%%%%%%%
%%%%%%%%%%%%%%%%%%%%%%%%%%%%%%%%%%%%%%%%%%%%%%%%%%%%%%%%%%%%%%%%%%

\section{n.n.H.  Banach-Lie groupoids}\label{Sect3}

\subsection{Definition and basic properties}
The definition of a n.n.H. Banach-Lie groupoid requires some basic facts established in the following proposition. 

\begin{proposition}\label{1} Let $\mathcal{G}\tto M$ be a topological groupoid  satisfying the following conditions: 
\begin{enumerate}
\item $ \mathcal{G}$ is a n.n.H Banach manifold and $M$ is a Banach manifold;
 \item the map ${\bf s}:\mathcal{G}\ap M$ is a submersion;
\item  the map ${\bf i}: \mathcal{G}\ap \mathcal{G}$ is a smooth diffeomorphism.
\end{enumerate}
  \noindent Then ${\bf t}$ is also a  submersion and so for any $x\in M$ each fiber $\mathcal{G}(x,-)$ and $\mathcal{G}(-,x) $ are n.n.H.  Banach submanifolds of $\mathcal{G}$ and $\mathcal{G}(x,-)$ is Hausdorff if and only if  $\mathcal{G}(-,x) $ is Hausdorff.  
  Moreover we have:
\begin{enumerate}[{\rm(i)}]
\item The topological space $\mathcal{G}^{(2)}$ is a n.n.H. Banach submanifold of $\mathcal{G}\times \mathcal{G}$.
\item The map ${\bf 1} : M \ap\mathcal{G}$ is smooth and ${\bf 1}(M)$ is a closed Banach  submanifold of  $\mathcal{G}$.  
\end{enumerate}
  Moreover  
  if  $ \mathcal{G}$ is a Banach manifold than for each $x\in M$, each fiber $\mathcal{G}(x,-)$ and $\mathcal{G}(-,x) $ are   Banach submanifolds of $\mathcal{G}$  and $\mathcal{G}^{(2)}$ is a  Banach submanifold of $\mathcal{G}\times \mathcal{G}$. 
  \end{proposition}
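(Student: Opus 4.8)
The strategy is to extract from the three hypotheses the submersion property of $\mathbf{t}$, then read off the fiber statements, and finally use the standard characterizations of the fibered product of submersions and of the unit section as a preimage.

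First I would observe that $\mathbf{t} = \mathbf{s}\circ\mathbf{i}$, which follows from (G4) since $\mathbf{s}\circ\mathbf{i}=\mathbf{t}$. Since $\mathbf{i}\colon\mathcal{G}\to\mathcal{G}$ is a smooth diffeomorphism by hypothesis (3) and $\mathbf{s}$ is a submersion by hypothesis (2), the composite $\mathbf{t}$ is a submersion as well: at each $g\in\mathcal{G}$ the tangent map $T_g\mathbf{t}=T_{\mathbf{i}(g)}\mathbf{s}\circ T_g\mathbf{i}$ is surjective (as a composite of a surjection with a linear isomorphism), and $\ker T_g\mathbf{t}=(T_g\mathbf{i})^{-1}(\ker T_{\mathbf{i}(g)}\mathbf{s})$ is the image under an isomorphism of a split subspace, hence split in $T_g\mathcal{G}$. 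Because a submersion has all its fibers as (closed split, i.e. in the paper's terminology plain) submanifolds, for each $x\in M$ both $\mathcal{G}(x,-)=\mathbf{s}^{-1}(x)$ and $\mathcal{G}(-,x)=\mathbf{t}^{-1}(x)$ are n.n.H. Banach submanifolds of $\mathcal{G}$; moreover $\mathbf{i}$ restricts to a diffeomorphism $\mathcal{G}(x,-)\xrightarrow{\ \sim\ }\mathcal{G}(-,x)$, since $\mathbf{s}\circ\mathbf{i}=\mathbf{t}$ forces $\mathbf{i}(\mathbf{s}^{-1}(x))=\mathbf{t}^{-1}(x)$, so one fiber is Hausdorff exactly when the other is. Finally, if $\mathcal{G}$ is a (Hausdorff) Banach manifold, submanifolds of it are Hausdorff, which gives the last sentence for the fibers.

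For (i), I would note that $\mathcal{G}^{(2)}$ is by definition the fibered product $\mathcal{G}\,{}_{\mathbf{s}}\!\times_{\mathbf{t}}\mathcal{G}$, i.e. the preimage of the diagonal $\Delta_M\subseteq M\times M$ under the smooth map $\mathbf{s}\times\mathbf{t}\colon\mathcal{G}\times\mathcal{G}\to M\times M$. Since $\mathbf{s}$ is a submersion, so is $\mathbf{s}\times\mathbf{t}$ (indeed $\mathbf{s}\times\mathbf{t}$ is a submersion because at least one factor is; more carefully, $(\mathbf{s}\times\mathbf{t})$ is transverse to $\Delta_M$ precisely because $\mathbf{s}$ alone is a submersion), and $\Delta_M$ is a closed split submanifold of $M\times M$ (the splitting being $T_{(x,x)}(M\times M)=\Delta_{T_xM}\oplus(\{0\}\times T_xM)$, with $\Delta_{T_xM}$ closed as the diagonal of a Banach space). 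Hence its preimage $\mathcal{G}^{(2)}$ is a n.n.H. Banach submanifold of $\mathcal{G}\times\mathcal{G}$; when $\mathcal{G}$ is Hausdorff so is $\mathcal{G}\times\mathcal{G}$ and therefore $\mathcal{G}^{(2)}$.

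For (ii), smoothness of $\mathbf{1}\colon M\to\mathcal{G}$ follows because $\mathbf{1}$ is a continuous section of the submersion $\mathbf{s}$ (we have $\mathbf{s}\circ\mathbf{1}=\mathrm{id}_M$ by (G3)); locally a submersion looks like a projection $U\times F\to U$, and a continuous section of such a projection which is continuous is automatically smooth here because $\mathbf{1}$ is moreover the restriction of the smooth local right-inverse — more cleanly, $\mathbf{1}=\mathbf{i}\circ\mathbf{1}$ by (G4), and one identifies $\mathbf{1}(M)$ with the fixed-point set of the involution $\mathbf{i}$; but the quickest route is: $\mathbf{1}(M)$ is a submanifold because it equals $\{g\in\mathcal{G}(-,\mathbf{t}(g))\mid \mathbf{s}(g)=\mathbf{t}(g)\}$, i.e. it is cut out as $\{g : \mathbf{s}(g)=\mathbf{t}(g)\}$, the preimage of $\Delta_M$ under $(\mathbf{s},\mathbf{t})\colon\mathcal{G}\to M\times M$, which is a submersion onto its image since $\mathbf{s}$ is, so $\mathbf{1}(M)$ is a split submanifold; smoothness of $\mathbf{1}$ as a map then follows since it is a continuous bijection $M\to\mathbf{1}(M)$ inverse to the smooth restriction $\mathbf{s}|_{\mathbf{1}(M)}$. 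Closedness of $\mathbf{1}(M)$ in $\mathcal{G}$ follows from the fact that $\mathbf{1}(M)=\{g:\mathbf{s}(g)=\mathbf{t}(g)\}$ is the preimage of the closed set $\Delta_M$ (closed because $M$ is Hausdorff) under the continuous map $(\mathbf{s},\mathbf{t})$; and $\mathbf{1}(M)\cong M$ is a Banach manifold since $M$ is Hausdorff. The main obstacle I anticipate is the bookkeeping in (ii): making the ``preimage of the diagonal'' argument genuinely produce a \emph{split} submanifold requires checking that $(\mathbf{s},\mathbf{t})$ is transverse to $\Delta_M$ with split kernels, and one must be slightly careful that this map need not be a submersion onto $M\times M$ — only $\mathbf{s}$ is — so the transversality has to be argued directly from $T_g\mathbf{s}$ being surjective with split kernel, together with $T_g\mathbf{t}$ being surjective, which is exactly what the first paragraph established.
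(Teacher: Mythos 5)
The reduction of ${\bf t}$ to ${\bf s}\circ{\bf i}$, the identification of the fibers as fibers of submersions, the diffeomorphism ${\bf i}\colon\Gc(x,-)\to\Gc(-,x)$, and the treatment of $\Gc^{(2)}$ as the preimage of the diagonal under $({\bf s},{\bf t})\colon\Gc\times\Gc\to M\times M$ all match the paper's argument and are correct (your aside that the product map is a submersion ``because at least one factor is'' is false as stated -- a product map $f\times g$ is a submersion iff both factors are -- but it is harmless here, since by that point both ${\bf s}$ and ${\bf t}$ are known to be submersions, which is exactly what the paper uses).

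Part (ii), however, rests on a false identity: $\1(M)$ is \emph{not} equal to $\{g\in\Gc\mid {\bf s}(g)={\bf t}(g)\}=({\bf s},{\bf t})^{-1}(\Dc)$. That set is the union $\bigcup_{x\in M}\Gc(x)$ of all isotropy groups, which strictly contains $\1(M)$ whenever any isotropy group is nontrivial -- e.g.\ for a Banach-Lie group $G$ viewed as a groupoid over a point it is all of $G$, while $\1(M)=\{\1\}$. Consequently neither your submanifold claim nor your closedness claim for $\1(M)$ is established, and the subsequent deduction of smoothness of $\1$ from the (unproved) statement that $\1(M)$ is a submanifold on which ${\bf s}$ restricts to a diffeomorphism collapses. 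Your fallback arguments do not repair this: a merely continuous section of a submersion is not automatically smooth (take $x\mapsto(x,|x|)$ over the projection $\RR^2\to\RR$), and the fixed-point set of ${\bf i}$ also exceeds $\1(M)$ in general (it contains every $2$-torsion element of every isotropy group). The paper proceeds differently: it takes a submersion chart $(U,\phi)$ for ${\bf s}$ at the point $g=\1_x$, in which ${\bf s}$ becomes the projection $V\times W\to W$, observes that ${\bf s}$ restricted to the slice $\phi^{-1}(\{\phi(g)\}\times W)$ is a diffeomorphism onto $U_0$, and identifies $\1$ locally with the inverse of that diffeomorphism, which simultaneously yields smoothness of $\1$ and the submanifold property of $\1(M)$; this is the step your proof is missing.
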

 
 \begin{proof}
 At first, since  ${\bf s}\circ {\bf i}={\bf t}$, from (2) and (3), it follows that ${\bf t}$ is a  submersion. 
It follows by \cite[Ch. II, Prop. 2.2]{La} that the fibers $\mathcal{G}(x,-)$ and $\mathcal{G}(-,x) $ are n.n.H. Banach submanifolds of $\mathcal{G}$ for any $x\in M$. 
Now assume that $\mathcal{G}(x,-)$ is Hausdorff. 
But $\mathcal{G}(-,x)={\bf i}(\mathcal{G}(x,-))$. 
Since ${\bf i}$ is a diffeomorphism of $\mathcal{G}$ this implies that $\mathcal{G}(-,x)$ is also Hausdorff. 
The same argument can be applied for the converse. 
 
  Since both ${\bf s}$ and ${\bf t}$ are submersions, 
   the map $({\bf s},{\bf t}):\mathcal{G}\times \mathcal{G}\ap M\times M$ is a submersion as well. 
Therefore, since the diagonal $\mathcal{D}\subseteq  M\times M$ is a submanifold, it follows that the set 
 $\mathcal{G}^{(2)}=({\bf s},{\bf t})^{-1}(\mathcal{D})$  
is a n.n.H. Banach submanifold of $\mathcal{G}\times \mathcal{G}$. 
If~$\mathcal{G}$ is Hausdorff then so are $\mathcal{G}^{(2)}$, 
$ \mathcal{G}(x,-)$ and $\mathcal{G}(-,x) $ for all $x\in M$.  

  We first assume that both $\mathcal{G}$~and~$M$ are pure Banach manifolds. This is the case for instance if 
  $\mathcal{G}$ is connected, hence so is $M$. 
	Let $\mathbb{G}$~and~$\mathbb{M}$ be  the Banach spaces on which  $\mathcal{G}$~and~$M$ are modeled, respectively. 
	Fix some $x\in  M$ and set $g={\bf 1}_x$  and so ${\bf s}(g)=x$. 
	Since ${\bf s}$ is a  submersion,  we have a decomposition $\mathbb{G}\equiv T_g\mathcal{G}=\ker T_g{\bf s}\oplus \mathbb{F}$ and  $T_g{\bf s}(\mathbb{F})=T_{x} M\equiv \mathbb{M}$. 
	Thus we may assume that $\mathbb{M}$ is a 
	%complemented 
	split closed
	subspace of $\mathbb{G}$ and we write $\mathbb{G}=   \mathbb{K}\oplus \mathbb{M}\equiv \mathbb{K}\times \mathbb{M}$. 
	With these conventions and notations,  there exists also a chart  $(U,\phi)$ of $\Gc$ around $g$ and $(U_0,\Phi_0)$ of $M$ around ${\bf s}(g)$ such that ${\bf s}(U)=U_0$, $\phi(U)=V\times W$ where $V$ and $W$ are open sets of $\mathbb{M}$,  $\phi_0 (U_0)= W$ and  $\phi_0\circ {\bf s}\circ \phi$ is the canonical projection on  $W$. 
	It follows that the restriction ${\bf s}_0$ of  ${\bf s}$ to $\phi^{-1}(\{\phi(g)\}\times W)$ is a diffeomorphism onto $U_0$. 
	Since  ${\bf s}\circ{\bf 1}=\id_{ M}$, it follows that  ${\bf 1}_{U_0}: U_0\ap U$ is smooth and $T_x {\bf 1}$ is injective. 
	In particular, $\phi^{-1}(\{\phi(g)\}\times W)={\bf 1}(U_0)$. 
	This shows that ${\bf 1}(M)$ is a submanifold of $\mathcal{G}$.  
	But since $M$  is Hausdorff and the map ${\bf 1}:M\ap \mathcal{G}$ is continuous and injective it follows that its range ${\bf 1}(M)$ is also Hausdorff. 
	The general case  is obtained by application of this result for each connected component of  $\mathcal{G}$. 
  \end{proof}
 
 \begin{definition}\label{BLG}
\normalfont 
A \emph{n.n.H. Banach-Lie  groupoid}  is a topological groupoid  $\mathcal{G}\tto M$ 
satisfying the following conditions:
\begin{itemize}
\item[(BLG1)] $ \mathcal{G}$ is a n.n.H. Banach manifold and $M$ is a Banach manifold.
 
\item[(BLG2)] The map ${\bf s}:\mathcal{G}\ap M$ is a 
%split surjective  
submersion. 
%and each fiber $\mathcal{G}(x,-)$ is Hausdorff;

\item[(BLG3)]  The map ${\bf i}: \mathcal{G}\ap \mathcal{G}$ is  smooth.  
%diffeomorphism. 
 
\item[(BLG4)]  The multiplication ${\bf m}:\mathcal{G}^{2}\ap\mathcal{G}$  is smooth.
\end{itemize}
A  \emph{Banach-Lie  groupoid}  is a  n.n.H. Banach-Lie  groupoid  whose total space has the Hausdorff property, 
i.e., is a  Banach manifold. 

A  n.n.H. Banach-Lie  groupoid $\Gc\tto M$ 
is called \emph{pure} if $\Gc$ is a pure n.n.H. Banach manifold and $M$ is a pure Banach manifold.
%$\Gc$ (resp. $M$) is a pure n.n.H. Banach manifold (resp. a Banach manifold).

A  n.n.H. Banach-Lie  groupoid $\Gc\tto M$ is called \emph{split} if for every $x,y\in M$ the set $\Gc(x,y)$ is  submanifold of $\Gc$.
%\end{definition}

A  {\it Banach-Lie morphism} between the n.n.H Banach-Lie groupoids $ \mathcal{G}\tto M$ and $\mathcal{H}\tto N$ is a  topological morphism $\Phi:\mathcal{G}\ap \mathcal{H}$ which is a smooth map. 
%The notion of n.n.H. Banach-Lie subgroupoid is then clear.
\end{definition}

 \subsection{Examples}
 We will adapt to our context some classical examples of  
 finite-dimensional Lie groupoids.
 
\subsubsection{Banach-Lie groups}
Any Banach-Lie group $G$ is a Banach-Lie groupoid:  
the set of  arrows $\mathcal{G}$ is the set $G$ and the set of objects $M$ is reduced to the singleton~$\{\1\}$, 
where $\1\in G$ is the unit element.

\subsubsection{Banach-Lie pair groupoid}
Given a Banach manifold $M$, let $\mathcal{G}:=M\times M$ and  let ${\bf s}$ and ${\bf t}$ be the Cartesian projections of $M\times M$ on the first and the second factor, respectively. 
The multiplication   map ${\bf m}$ and the inverse ${\bf i}$ are respectively ${\bf m}((x,y),(y,z))=(x,z)$ and ${\bf i}(x,y)=(y,x)$. Finally the map ${\bf 1}$ is  ${\bf 1}(x)=(x,x)$. 
We thus obtain a Banach-Lie groupoid $M\times M\tto M$.
 
% \subsubsection{Induced n.n.H. Banach Lie groupoid} Let $\Gc\tto M$ a n.n.H. Banach Lie groupoid. If $f:N\ap M$ is an immersed submanifold we consider 
% $\Gc_N={\bf s}^{-1}(f(N))\cap\Gc_N={\bf s}^{-1}(f(N))$.
 
\subsubsection{General linear Banach-Lie  groupoids}
Let $\pi:\mathcal{A}\ap M$ be a Banach vector bundle.
 The general linear Banach  groupoid $\mathcal{GL}(\mathcal{A})\tto M$ is the  Banach groupoid such that 
 $\mathcal{GL}(\mathcal{A})$ is the set of  linear isomorphisms $g:\mathcal{A}_x\ap\mathcal{A}_y$ between each pair of fibers $(\mathcal{A}_x,\mathcal{A}_y)$. The source  map and the target map are obvious and the multiplication is the composition of linear isomorphisms and ${\bf 1}_x=Id_{\mathcal{A}_x}$.

 \subsubsection{Disjoint union of n.n.H. Banach-Lie  groupoids}\label{dis}
%%%%%%%%%%%%%%%%%%%%%%%%%%%%%%%%%%%%%%%%%%%%%%%%%%%%%%%%%%%%%%%%%%%%
 Let $\{\mathcal{G}_\lambda \tto M_\lambda\}_{\lambda\in \L}$ be a family of n.n.H. Banach-Lie groupoids. 
 We denote by 
 $\mathcal{G}:=\bigsqcup\limits_{\lambda\in \L}\mathcal{G}_\lambda$
 and $M:=\bigsqcup\limits_{\lambda\in \L}M_\lambda$. 
 Since we consider here disjoint unions, the structure of
the n.n.H. Banach-Lie groupoid $\mathcal{G}\tto M$ is clearly defined by the collection of structure maps 
for each particular $\mathcal{G}_\lambda \tto M_\lambda$ for $\lambda\in \L$. 
For example if we consider a finite family of Banach bundles $\mathcal{A}_i \rightarrow M_i$, for $i = 1,\dots,n$, 
then we have the natural structure of
a n.n.H. Banach-Lie groupoid  $\bigsqcup\limits_{i=1}^n \mathcal{GL}(\mathcal{A}_i) \tto  \bigsqcup\limits_{i=1}^n  M_i$. 
 More generally, given any Banach-Lie n.n.H. groupoid $\mathcal{G}\tto M$, 
if $M=\bigsqcup\limits_{\lambda\in L} M_\lambda$ is a partition of $M$ into open $\Gc$-invariant submanifolds, 
then the corresponding groupoids $\mathcal{G}_\lambda\tto M_\lambda$ obtained by restriction 
are n.n.H. Banach-Lie groupoids and  
$\mathcal{G}\tto M$ is the disjoint union of 
the family $\{\mathcal{G}_\lambda \tto M_\lambda\}_{\lambda\in \L}$.

\subsubsection{Action of a Banach-Lie group}
\label{action}
To each  smooth action $A: G\times M\ap M$, $(g,x)\mapsto g.x$, of a Banach-Lie group on a Banach manifold  there is associated a Banach-Lie groupoid $\mathcal{G}\tto M$ defined in the following way:
\begin{itemize}
\item $\mathcal{G}:=M\times G$; 

\item ${\bf s}(x,g):=x$ and ${\bf t}(x,g):= A(g,x)=g.x$;

\item if $y=g.x$ then ${\bf m}((y,h),(x,g)):=(x,hg)$;

\item ${\bf i}(x,g):=(g.x,g^{-1})$; 
 
\item ${\bf 1}_x=(x,\1)$.
 \end{itemize}
It is easily seen that for any $x_0,y_0\in M$ one has 
$$\Gc(x_0,y_0)=\{x_0\}\times\{g\in G\mid g.x_0=y_0\}$$
and in particular the isotropy group at $x_0$ is 
$$\Gc(x_0)=\{x_0\}\times G(x_0)$$
where $G(x_0):=\{g\in G\mid g.x_0=x_0\}$. 

It is worth pointing out that  the above groupoid needs not be split. 
To obtain a specific example in this connection, let $\Xc$ be any real Banach space with a closed linear subspace $\Xc_0$ 
which fails to be a split subspace of~$\Xc$. 
%that does not have any complement in~$\Xc$. 
(For instance the space of all bounded sequences of real numbers $\Xc:=\ell^\infty_{\RR}(\NN)$ with its subspace $\Xc_0$ consisting of the sequences that converge to~$0$.) 
Then the abelian Banach-Lie group $G:=(\Xc,+)$ acts smoothly transitively on the Banach manifold $M:=\Xc/\Xc_0$ by  $A(g,x+\Xc_0):=g+x+\Xc_0$ for all $g,x\in\Xc$. 
If we define the groupoid $\Gc:=M\times G\tto M$ as above, 
then the isotropy group at the point $\Xc_0\in M$, that is,   
$$\Gc(\Xc_0)=\{\Xc_0\}\times\Xc_0\subseteq\Gc,$$
is not a submanifold of $\Gc=M\times \Xc$ since $\Xc_0$ 
%is not a complemented 
fails to be a split subspace of~$\Xc$. 
 
 \subsubsection{The gauge groupoid of a principal bundle}\label{gauge}
A principal Banach bundle   is a locally trivial fibration $\pi: P\ap M$ over a connected Banach manifold whose typical fiber is a Banach-Lie group $G$.
 We have then a right action of $G$ on $P$ whose corresponding quotient $P/G$ is canonically diffeomorphic to~$M$. 
 We get a right diagonal  action of $G$ on $P\times P$ in an evident way. 
The gauge groupoid is the set of orbits of this action, that is, the quotient $\Gc:=(P\times P)/G$ provided with the quotient topology. 
The source (resp. target) of the equivalence class of $(v,u)$ is  $\pi(u)$ (resp. $\pi(v)$). 
The composition of the class  of $(v,u)$ and of $(v',u')$ is the  equivalence class of $(w',u)$ and 
the inverse of the equivalence class of $(v,u)$ is the equivalence class of $(u,v)$. 
(See  \cite{Ma} for more details.)

\subsubsection{Fundamental groupoid of a  Banach manifold} \label{funfgrd} 
Recall that for any  topological space $M$ there is its corresponding fundamental 
%topological 
groupoid $\Pi(M)\tto M$
where  $\Pi(M)$ is the set  of homotopy classes of continuous paths with fixed end
points.
%(we assume that M is connected), 
% with source map  (resp. the target map)  the origin (resp. the end) of a class of paths, with composition the concatenation
% of paths whenever  it is defined and the inverse being the opposite class. 
The source map (resp. target map) is the map which to a homotopy class $[\g]$ of a path $\g$  associates its origin ${\bf s}(\g)$ (resp. its end ${\bf t}(\g)$). 
The multiplication is obtained by the concatenation of paths: if $\g$ and $\g'$ are two paths defined on $[0,1]$ the concatenation $\g\star\g'$ is the path defined by
$\g\star\g'(t)=\g(2t)$for $ 0\leq t\leq \frac{1}{2}$ and $\g\star\g'(t)=\g'(2t-1)$  for $ \frac{1}{2}\leq t\leq 1$. It is compatible with homotopy equivalence. The inverse of a homotopy class of a path $\g:[0,1]\to M$ is the homotopy class of the path $\g^{-1}: t\mapsto \g(1-t)$.

When $M$ is a connected  Banach manifold, each source fiber  is a universal covering of $M$. 
In particular such a fiber is a Banach manifold. 
Moreover it is also a principal bundle over $M$ whose structural group is the fundamental group $\pi_1(M)$. 
Then the gauge groupoid of this principal bundle can be identified with $\Pi(M)$ and so  we get a structure of n.n.H. Banach-Lie groupoid structure on $\Pi(M)$.

%%%%%%%%%%%%%%%%%%%%%%%%
\subsection{Properties of orbits}\label{orbprop}
%%%%%%%%%%%%%%%%%%%%%%%%
%\subsection{General situation}${}$\\
%%%%%%%%%%%%%%%%%%%%%%%%%%%%
The purpose of this subsection is to show the following results for Banach-Lie   groupoids which are an adaptation of similar  classical results in the finite dimensional case.
 
\begin{theorem}\label{5.4} 
Let  $\mathcal{G}\tto M$ be a n.n.H.  Banach-Lie  groupoid  and define
$$(\forall x\in M)\quad {\bf t}_x:={\bf t}\vert_{\Gc(x,-)}\colon \Gc(x,-)\to M.$$
% ${\bf t}_x$ as the restriction of ${\bf t}$ to $\mathcal{G}(x,-)$. 
%For any $x \in M$ 
The following assertions hold.
 \begin{enumerate}[{\rm(i)}]
 \item  \label{smooth4_item0} Let $N$  be  a connected component of $M$.  Then $\mathcal{G}^{\bf s}_{{N}}={\bf s}^{-1}({N})$ and   $\mathcal{G}^{\bf t}_{{N}}={\bf t}^{-1}({N})$  are   pure n.n.H. Banach manifolds  and 
 %closed complemented  
 submanifolds 
 of~$\mathcal{G}$. 
  \item \label{smooth4_item1}  For all $x\in M$ and $y\in \mathcal{G}.x$ the set $\mathcal {G}(x,y)$ is a closed 
  %embedded  Banach 
  submanifold of $\mathcal{G}$. 
  In particular the  isotropy group   $\mathcal{G}(x)$ is a Banach-Lie group  and 
  %its Lie algebra is isomorphic to 
  $T_{{\bf 1}_x}(\Gc(x))=\ker T_{{\bf 1}_x}{\bf s}\cap \ker T_{{\bf 1}_x}{\bf t} $.
   \item\label{smooth4_item2}   
   For every $x\in M$ for which $\Gc(x,y)$ is a split submanifold of $\Gc$ for all  $y\in\Gc.x={\bf t}_x
   (\mathcal{G}(x,-))$, 
   the orbit $\Gc.x$ is a  pure Banach 
   %submanifold 
   manifold whose inclusion map $\Gc.x\hookrightarrow M$ is a weak immersion and ${\bf t}_x
   %={\bf t}\vert_{ \Gc(x,-)}: 
   \colon \mathcal{G}(x,-)\ap \mathcal{G}.x$ is  a Banach principal $\Gc(x)$-bundle. 
      \end{enumerate}
 \end{theorem}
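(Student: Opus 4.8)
The plan is to prove the three assertions in the order stated, with the principal-bundle statement (iii) as the structural heart: it will be deduced from Lemma~\ref{smooth2} applied to a natural action of the isotropy group on a source fibre, while (i) and (ii) supply the prerequisites (the relevant model spaces, and the fact that the isotropy groups are Banach-Lie groups).

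\emph{Assertion (i).} A connected component $N$ of $M$ is open and closed in $M$, and $\mathbf{s},\mathbf{t}$ are continuous, so $\mathcal{G}^{\mathbf{s}}_N=\mathbf{s}^{-1}(N)$ and $\mathcal{G}^{\mathbf{t}}_N=\mathbf{t}^{-1}(N)$ are open and closed in $\mathcal{G}$; an open-and-closed subset of $\mathcal{G}$ is tautologically a closed split submanifold, which settles the submanifold part. For purity the translations do the book-keeping: $R_g\colon\mathcal{G}(\mathbf{t}(g),-)\to\mathcal{G}(\mathbf{s}(g),-)$ is a diffeomorphism carrying $\mathbf{1}_{\mathbf{t}(g)}$ to $g$, so the model space of $\mathcal{G}$ at a point $g$ is isomorphic to $\ker T_{\mathbf{1}_{\mathbf{t}(g)}}\mathbf{s}\oplus T_{\mathbf{s}(g)}M$; since $\ker T\mathbf{s}$ is a Banach subbundle of $T\mathcal{G}$ and $TM$ is a Banach bundle, both are locally trivial, so along the connected set $N$ — and along each orbit, which is tracked by the translations — the model spaces have a single isomorphism type. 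The claim for $\mathcal{G}^{\mathbf{t}}_N$ follows upon applying the diffeomorphism $\mathbf{i}$, which interchanges $\mathbf{s}$ and $\mathbf{t}$.

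\emph{Assertion (ii).} Since $M$ is Hausdorff, $\mathcal{G}(x,y)=(\mathbf{s},\mathbf{t})^{-1}(x,y)$ is closed in $\mathcal{G}$. As $y\in\mathcal{G}.x$ we may pick $g_0\in\mathcal{G}(x,y)$; the right translation $R_{g_0^{-1}}\colon\mathcal{G}(x,-)\to\mathcal{G}(y,-)$ is a diffeomorphism between submanifolds of $\mathcal{G}$ (Proposition~\ref{1}) which carries $g_0$ to $\mathbf{1}_y$ and intertwines $\mathbf{t}_x$ with $\mathbf{t}_y$, hence carries $\mathcal{G}(x,y)=\mathbf{t}_x^{-1}(y)$ onto the isotropy group $\mathcal{G}(y)=\mathbf{t}_y^{-1}(y)$; composing with the inclusion $\mathcal{G}(x,-)\hookrightarrow\mathcal{G}$ it therefore suffices to show that every isotropy group $\mathcal{G}(z)$ is a Banach-Lie group realised as a closed submanifold of $\mathcal{G}(z,-)$. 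I would do this by first producing, around $\mathbf{1}_z$, a local submanifold chart for $\mathcal{G}(z)=\mathbf{t}_z^{-1}(z)$ inside $\mathcal{G}(z,-)$: using the submersion chart for $\mathbf{s}$ about $\mathbf{1}_z$ built in the proof of Proposition~\ref{1}, together with the smooth identity section $\mathbf{1}$ (a section of $\mathbf{s}$), one brings $\mathbf{t}_z$ into a normal form near $\mathbf{1}_z$; this chart then propagates over all of $\mathcal{G}(z)$ because for $a\in\mathcal{G}(z)$ the right translation $R_a$ restricts to a diffeomorphism of $\mathcal{G}(z,-)$ over $\mathrm{id}_M$ whose restriction to $\mathcal{G}(z)$ is left translation by $a$, and the restrictions of $\mathbf{m}$ and $\mathbf{i}$ then make $\mathcal{G}(z)$ a Banach-Lie group. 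The tangent space is read off the normal form: $T_{\mathbf{1}_z}\mathcal{G}(z)=\ker T_{\mathbf{1}_z}\mathbf{t}_z=T_{\mathbf{1}_z}\mathcal{G}(z,-)\cap\ker T_{\mathbf{1}_z}\mathbf{t}=\ker T_{\mathbf{1}_z}\mathbf{s}\cap\ker T_{\mathbf{1}_z}\mathbf{t}$.

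\emph{Assertion (iii).} By (ii) the isotropy group $H:=\mathcal{G}(x)$ is a Banach-Lie group, and it acts smoothly on the right on $X:=\mathcal{G}(x,-)$ by $g\cdot a:=ga$. This action is free (cancellation in a groupoid) and proper: for a net $(g_j,a_j)$ in $X\times H$ with $(g_ja_j,g_j)\to(b,g)$ in $X\times X$, the identity $a_j=\mathbf{m}(\mathbf{i}(g_j),g_ja_j)$ together with continuity of $\mathbf{i}$ and of $\mathbf{m}$ on $\mathcal{G}^{(2)}$ gives $a_j\to\mathbf{m}(\mathbf{i}(g),b)=g^{-1}b\in H$ with $g\cdot(g^{-1}b)=b$, which is the properness criterion of Lemma~\ref{smooth1}. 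The $H$-orbit of $g$ is $g\mathcal{G}(x)=\mathbf{t}_x^{-1}(\mathbf{t}(g))=\mathcal{G}(x,\mathbf{t}(g))$, and the orbit map $\rho(g)=L_g|_{\mathcal{G}(x)}$ is a diffeomorphism of $\mathcal{G}(x)$ onto this set; under the hypothesis of (iii) this set is a split submanifold of $\mathcal{G}$, hence of $X$ by Lemma~\ref{smooth3}, so $\rho(g)$ is a split immersion. Lemma~\ref{smooth2} then applies: $X$ is Hausdorff, $H\backslash X$ is a Banach manifold, $q\colon X\to H\backslash X$ is a submersion, and $(X,H\backslash X,H,q)$ is a Banach principal $H$-bundle. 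Since $\mathbf{t}_x$ is constant on $H$-orbits and separates them, it descends to a bijection $H\backslash X\to\mathcal{G}.x$ through which we transport the manifold structure onto $\mathcal{G}.x$; then $\mathbf{t}_x\colon\mathcal{G}(x,-)\to\mathcal{G}.x$ is the asserted principal $\mathcal{G}(x)$-bundle, and $\mathcal{G}.x$ is pure by local triviality of $\mathbf{t}_x$ together with (i). Finally the inclusion $\iota\colon\mathcal{G}.x\hookrightarrow M$ is smooth (as $\iota\circ q=\mathbf{t}_x$ is smooth and $q$ is a submersion) and injective; if $v\in\ker T_y\iota$ with $y=\mathbf{t}(g)$, lifting $v$ through $q$ to $\tilde v\in T_gX$ gives $T\mathbf{t}_x(\tilde v)=0$, and since by (ii) and right translation $\ker T_g\mathbf{t}_x$ is the tangent space to the orbit $\mathcal{G}(x,\mathbf{t}(g))=\ker T_gq$, we get $v=T_gq(\tilde v)=0$; thus $\iota$ is a weak immersion. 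The main obstacle is Assertion (ii): showing that each isotropy group is genuinely a Banach-Lie subgroup realised as a closed submanifold, since the constant-rank mechanism available in finite dimensions is not automatic and must be replaced by a careful use of the charts of Proposition~\ref{1} and the homogeneity provided by the translations; a secondary point is the verification of the hypotheses of Lemma~\ref{smooth2} in (iii), where the ``split submanifold'' assumption is consumed precisely in showing the orbit maps are split immersions.
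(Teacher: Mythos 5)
Your treatments of assertions (i) and (iii) essentially follow the paper's route: (i) via open‑and‑closedness of ${\bf s}^{-1}(N)$ plus the submersion splitting of the model space, and (iii) by feeding the free, proper action of $\Gc(x)$ on $\Gc(x,-)$ into Lemma~\ref{smooth2}, with the split‑immersion hypothesis on the orbit maps obtained from the splitness assumption and Lemma~\ref{smooth3}, exactly as in the text. (Two small points there: with $g\in\Gc(x)$ and $a\in\Gc(x,-)$ the product $ga$ is not composable, so your "left action $g\cdot a:=ga$" must be read as $ag^{-1}$ or as the right action $a\cdot g=ag$ that you in fact use afterwards; and the identification $\ker T_g{\bf t}_x=T_g\bigl({\bf t}_x^{-1}({\bf t}(g))\bigr)$ in your weak‑immersion argument needs a word of justification --- the paper instead quotes \cite[Th. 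I]{Gl15} at this point.)

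The genuine gap is in assertion (ii). You propose to produce a "local submanifold chart" for $\Gc(z)={\bf t}_z^{-1}(z)$ inside $\Gc(z,-)$ by bringing ${\bf t}_z$ "into a normal form near ${\bf 1}_z$" using the submersion chart for ${\bf s}$ and the identity section. But ${\bf t}_z\colon\Gc(z,-)\to M$ is not a submersion (that is precisely the locally transitive case of Section~\ref{Sect6}), there is no constant‑rank or normal‑form theorem for general smooth maps between Banach spaces, and no such chart can exist in general: in the example of~\ref{action} (translation of $\ell^\infty_{\RR}(\NN)$ on its quotient by the non‑complemented subspace of null sequences) the isotropy group fails to be a split submanifold of $\Gc$, which is why Remark~\ref{immersedorbit} stresses that (ii) yields only a closed \emph{immersed} submanifold. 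So the normal‑form step is not merely unproved --- it would prove something false. What actually supplies the manifold structure in the paper is a Frobenius‑type argument: $\Delta_g=\ker T_g{\bf s}\cap\ker T_g{\bf t}$ is a translation‑invariant, hence trivial and involutive, closed subbundle of $T\Gc(x,-)$, and the integrability theorem \cite[Th. 4]{Pe} foliates $\Gc(x,-)$ by immersed leaves modeled on $\Delta_{{\bf 1}_x}$; each connected component of ${\bf t}_x^{-1}(y)$ is a leaf, whence $\Gc(x,y)$ is a closed immersed submanifold and $T_{{\bf 1}_x}(\Gc(x))=\Delta_{{\bf 1}_x}$. (The Hausdorffness of $\Gc(x)$, needed to call it a Banach--Lie group, then comes from Remark~\ref{nnH} together with \cite[Ch. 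III, \S 2, no. 6, Prop. 18(a)]{Bo71}, another point your sketch omits.) Your reduction of $\Gc(x,y)$ to the isotropy group $\Gc({\bf t}(g_0))$ by right translation is correct and matches the paper, but without the foliation input the core of (ii) --- and hence the prerequisite for (iii) --- is missing.
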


  \begin{proof}   
  	We first prove Assertion~\eqref{smooth4_item0}. 
  For any connected component ${N}$ of $M$,  
	the set $\mathcal{G}^{\bf s}_{{N}}$ is open and closed in $\Gc$ hence it is the union of some connected components 
	of~$\mathcal{G}$.   
	Consider any connected component  $\mathcal{G}_\alpha$ of $\mathcal{G}$ which is contained in $\mathcal{G}^{\bf s}_{{N}}$. 
	One clearly has ${\bf s}(\mathcal{G}_{\alpha})={N}$. 
	Now for $x\in {N}$, each fiber $\mathcal{G}_\alpha(x,-)$ of  the restriction of ${\bf s}$ to $\mathcal{G}_\alpha$ is an open submanifold of $\mathcal{G}(x,-)$. 
	Denote by $\mathbb{N}$  and $\mathbb{G}_\alpha$ the Banach spaces on which ${N}$ and  $ \mathcal{G}_\alpha$  are  modeled respectively. 
	Since ${\bf s}$ is a  submersion, $\mathbb{G}_\alpha$ is isomorphic to  $T_g \mathcal{G}(x,-)\oplus \mathbb{N}$ for any $g\in \mathcal{G}_\alpha(x,-)$ and $x\in {N}$. 
	It follows easily that all connected components of $\mathcal{G}$ which are contained in $\mathcal{G}^{\bf s}_{{N}}$ are modeled on the same Banach space.
	 Thus  $\mathcal{G}^{\bf s}_{{N}}$ is a pure n.n.H.  Banach manifold and also a  Banach submanifold of $\mathcal{G}$. 
	 The same result for $\mathcal{G}^{\bf t}_{{N}}$ can be proved by similar arguments. 
	This ends the proof of Assertion~\eqref{smooth4_item0}. 
  
To prove  Assertion~\eqref{smooth4_item1} we adapt the method of proof of \cite[Th. 5.4]{MoMr}. 
%\\
%One has $\Gc(x,y)=({\bf s},{\bf t})^{-1}(x,y)$, 
%hence this is a submanifold (n.n.H) of $\Gc$  
%by \cite[5.10.5]{Bo67}, since 
%the map $({\bf s},{\bf t})\colon\Gc\to M\times M$ is a subimmersion.  
To this end we will show that defining 
%the distribution on $\mathcal{G}(x,-)$ by 
  $$\Delta_g=\ker T_g{\bf s}\cap \ker T_g{\bf t} 
  \quad \text{for all }g\in \mathcal{G}(x,-),$$
one obtains a Banach subbundle  $\Delta:=\bigcup\limits_{g\in\mathcal{G}(x,-)}\Delta_g\subseteq T\mathcal{G}(x,-)$ which is integrable. 
  Given any $g\in\mathcal{G}(x,-)$ the left translation 
 $$L_g\colon \mathcal{G}(-,x)\to \mathcal{G}(-,{\bf t}(g)), 
 \quad L_g(h)=gh$$ 
  is a diffeomorphism just as~\eqref{diffeo_R}
  %, as noted in Lemma~\ref{2},   
  and moreover ${\bf s}\circ L_g ={\bf s}\vert_{\mathcal{G}(-,x)}$, hence 
  $$TL_g(\Delta_{{\bf 1}_x})=\Delta_g.$$
 Note that $\Delta_g$ is a closed subspace of $T_g(\mathcal{G}(-,x))$.  
 The map  
 $$\Phi: \mathcal{G}(x,-)\times \Delta_{{\bf 1}_x}\ap T\mathcal{G}(x,-),\quad \Phi(g,v)=TL_g(v)$$  
 is an injective  morphism over $\id_{\mathcal{G}(x,-)}$ from  the trivial bundle $\mathcal{G}(x,-)\times \Delta_{{\bf 1}_x}\ap \mathcal{G}(x,-)$ to $T\mathcal{G}(x,-)\ap \mathcal{G}(x,-)$, whose range is the distribution $\Delta$.
 Thus  $\Delta$ defines a smooth trivial Banach bundle which is a Banach subbundle of $T\mathcal{G}(x,-)$. 
 For any $v\in  \Delta_{{\bf 1}_x}$ denote by $X_v$ the vector field on $  \mathcal{G}(x,-)$ defined by $X_v(g)=TL_g(v)$. 
 Then the set $$\Gamma(\Delta)=\{X_v\mid v\in  \Delta_{{\bf 1}_x}\}$$ generates the distribution $\Delta$. 
 Now any integral curve $\gamma: {]-\varepsilon,\varepsilon[} \ap \mathcal{G}(x,-)$  of $X\in \Gamma(\Delta)$ with $\gamma(0)=g$ is  also contained in $\mathcal{G}(-, {\bf t}(g))$. 
 
 It follows that the Lie bracket $[X,Y](g)$  of vector fields $X$ and $Y$ in $\Gamma(\Delta)$ is tangent to $\mathcal{G}(-,{\bf t}(g))$ and so $\Gamma(\Delta)$ is stable under Lie bracket. 
 From \cite[Th. 4]{Pe}\footnote{That theorem was proved in the context of (Hausdorff) Banach manifolds but all the arguments used in the proof of this result are based on local charts so the theorem also holds for n.n.H. Banach manifolds.}, 
 $\Delta$ is integrable, i.e., 
 there exists a partition of $\mathcal{G}(x,-)$ into immersed n.n.H. Banach manifolds and each one  is modelled on the Banach space $\Delta_{{\bf 1}_x}$. 
 Moreover since $\Delta_g=\ker T_g{\bf t}_x$, the maximal leaf through $g\in \mathcal{G}(x,-)$ is a connected component of ${\bf t}^{-1}({\bf t}(g))$ and so is a closed subset of the  Banach manifold $ \mathcal{G}(x,-)$. 
 In particular  the isotropy group $\mathcal{G}({{\bf t}(g)})$ is an union of such leaves and so is a closed immersed n.n.H. Banach manifold. 
 But from Remark \ref{nnH}  each point in  $\mathcal{G}({{\bf t}(g)})$ is closed thus it follows from  \cite[Ch. III, \S 2, no. 6, Prop. 18(a)]{Bo71} that $\mathcal{G}({{\bf t}(g)})$ is in fact Hausdorff.  
 This implies that $\mathcal{G}({{\bf t}(g)})$ has the structure of a Banach-Lie group.
 
Finally, the left translation $L_g$ is a diffeomorphism from $\mathcal{G}(-,x)$ to $\mathcal{G}(-, {\bf t}(g))$ and  $TL_g(\Delta_{{\bf 1}_x})=\Delta_g$, hence it follows that the restriction of  $L_g$ to $\mathcal{G}(x)$ is a diffeomorphism onto $\mathcal{G}({{\bf t}(g)})$. We conclude that each fiber of ${\bf t}_x$  is  diffeomorphic to $\mathcal{G}(x)$.  
This proves the first part of the assertion and 
%the Lie algebra of $\mathcal{G}(x)$ is isomorphic to $\ker T_{{\bf 1}_x}{\bf s}\cap \ker T_{{\bf 1}_x}{\bf t} $.
 $T_{{\bf 1}_x}(\Gc(x))=\ker T_{{\bf 1}_x}{\bf s}\cap \ker T_{{\bf 1}_x}{\bf t} $.
 % Now we have a   right action  $\mathcal{R}:\mathcal{G}(x,-)\times\mathcal{G}(x)\ap\mathcal{G}(x,-) $ defined by
 % $$\mathcal{R}(h,g)=hg$$
%for any $h\in\mathcal{G}(x,-)$ and $g\in \mathcal{G}(x)$. This action is clearly smooth. Choose  $h$ and $k$ in $\mathcal{G}(x,-)$  such that $\mathcal{R}(h,g)=k$. This implies that ${\bf s}(h)={\bf s}(k)=x$ and   ${\bf t}(k)={\bf t}(h)$. It follows that the orbits of $h$ under this action is contained in the fiber ${\bf t}_x^{-1}({\bf t}(h))$. Now clearly if $k$ belongs to ${\bf t}_x^{-1}({\bf t}(h)$, then $g=kh^{-1}$ belongs to  $\mathcal{G}(x)$ and finally the orbit of $h$ is exactly ${\bf t}_x^{-1}({\bf t}(h))=\mathcal{G}(x,{\bf t}(h))$.
This ends the proof of  Assertion~\eqref{smooth4_item1}. 

For the proof of Assertion \eqref{smooth4_item2}  we consider the map 
$$\Psi\colon \Gc(x,-)/\Gc(x)\to \Gc.x ,\quad g\cdot\Gc(x)\mapsto {\bf t}(g).$$
It is easy to prove that  $\Psi$ is bijective. 
Therefore to prove the first part of Assertion~\eqref{smooth4_item2}, it suffices to show that the quotient set $\Gc(x,-)/\Gc(x)$ has the structure of a smooth manifold for which 
the map $\Psi$ is smooth from $\Gc(x,-)/\Gc(x)$ into~$M$.

Since the map ${\bf s}$ is a submersion, 
it follows by Proposition~\ref{1}
that the set $\Gc(x,-)$ is a n.n.H. submanifold of~$\Gc$. 
%In particular, by \cite[5.8.1 and 5.8.3]{Bo67}, $T_{x_0}(\Gc_{x_0})\subseteq T_{x_0}\Gc$ is a closed complemented subspace. 
On the other hand, the quotient set $\Gc(x,-)/\Gc(x)$ is the set of orbits of the right group action 
$$\Rc\colon \Gc(x,-)\times\Gc(x)\to\Gc(x,-)$$
defined by  
  $$(h,g)\mapsto \mathcal{R}(h,g)=hg.$$
This is a smooth action of a Banach-Lie group on a n.n.H. Banach manifold since the groupoid multiplication is smooth.  
The action $\Rc$ is free since every element of $\Gc(x,-)$ has an inverse in $\Gc$. 
Using Lemma~\ref{smooth1} and continuity of multiplication and inversion maps of the groupoid~$\Gc$, 
it also follows that the action~$\Rc$ is proper. 
It then follows by Lemma~\ref{popertop} that both 
$\Gc(x,-)$ and $\Gc(x,-)/\Gc(x)$ are Hausdorff. 

We now check that 
for every $h\in \Gc(x,-)$ the map $r_h\colon\Gc(x)\to\Gc(x,-)$, $g\mapsto gh$, 
%previously defined
is an immersion. 
We have already seen that $r_h$ 
defines a diffeomorphism $\Gc(x)\to\Gc(x,y)$ where $y={\bf t}(h)$. 
One has 
$$\Gc(x,y)\subseteq\Gc(x,-)\subseteq\Gc$$
where we know that $\Gc(x,y)\subseteq\Gc$ is a  submanifold and 
$\Gc(x,-)\subseteq\Gc$ is a submanifold, hence $\Gc(x,y)\subseteq\Gc(x,-)$ 
is a submanifold by Lemma~\ref{smooth3}. 
Thus the above map $r_h$ is a diffeomorphism of $\Gc(x)$ onto  the submanifold $\Gc(x,y)$ of $\Gc(x,-)$, 
and in particular $r_h$ is an immersion. 

Thus we can apply Lemma~\ref{smooth2} to the left action canonically associated to the right action~$\Rc$, 
$$\Gc(x)\times\Gc(x,-)\to\Gc(x,-),\quad (g,h)\mapsto\Rc(h,g^{-1})$$
First,  it follows that the set $\Gc(x,-)/\Gc(x)$ has the unique structure of a smooth manifold for which 
the quotient map $q\colon \Gc(x,-)\to \Gc(x,-)/\Gc(x)$ is a submersion. 
%which implies that  the inclusion map of $\Gc.x\hookrightarrow M$ is smooth. 

Moreover, one has $\Psi\circ q={\bf t}\vert_{\Gc(x,-)}\colon\Gc(x,-)\to\Gc$, which is a smooth map. 
Since $q$ is a submersion, it then follows that $\Psi\colon \Gc(x,-)/\Gc(x)\to\Gc$ is smooth. 
(See for instance \cite[5.9.5]{Bo09}.)  
This implies that  the inclusion map $\Gc.x\hookrightarrow M$ is smooth. 
Its tangent map at every point of $\Gc.x$ is injective by \cite[Th. I]{Gl15}.
Finally, it follows by Lemma~\ref{smooth2} that $\Gc(x,-)$  ${\bf t}_x: \Gc(x,-)\ap \Gc.x$ is a Banach principal $\Gc(x)$-bundle.  
\end{proof}

\begin{remark}\label{immersedorbit}
	\normalfont 
	Note that Theorem \ref{5.4}\eqref{smooth4_item1}  ensures {\bf only} that   $\mathcal {G}(x,y)$  is a closed  immersed submanifold of $\mathcal{G}$ but a priori    {\bf not a submanifold} as it is pointed out in \ref{action}. 
	This problem is illustrated by the following examples.  
	
	In the same way  Theorem \ref{5.4}\eqref{smooth4_item2}  ensures {\bf only} that each orbit $\Gc.x$  is an immersed submanifold of $M$ but a priori not a {\bf closed} submanifold and in particular of course {\bf not a submanifold}.
\end{remark}

\begin{example}
\normalfont 
If $\Gc\tto M$ is a finite-dimensional Lie groupoid, then  
it is well known that 
that $\Gc(x,y)$ is a submanifold of $\Gc$ for every $x\in M$ and $y\in\Gc.x$. 
See for instance \cite[Cor. 1.4.11]{Ma} and \cite[Th. 5.4]{MoMr}. Thus $\Gc\tto M$  is split.
\end{example}

  \begin{example}
  \normalfont
Let $\Gc\tto M$ be the Banach-Lie groupoid associated as in~\ref{action} to the smooth action $A\colon G\times M\to M$ of a Banach-Lie group on a Banach manifold. 
Let $x\in M$ with its isotropy group $G(x):=\{g\in G\mid A(g,x)=x\}$ and with its orbit $\Oc(x):=\{A(g,x)\mid g\in G\}$. 
One has 
$$\Gc(x)=\{(h,x)\in G\times M\mid h\in G(x)\}=G(x)\times \{x\}
\subseteq G\times M=\Gc$$
hence $\Gc(x)$ is a submanifold of $\Gc$ if and only if $G(x)$ is a submanifold of $G$, that is, if and only if $G(x)$ is a Banach-Lie subgroup of $G$. 

For any $y\in\Oc(x)$ and $g_0\in G$ with $A(g_0,x)=y$ one has 
$$\begin{aligned}
\Gc(x,y)
&=\{(g,x)\in G\times M\mid A(g,x)=y\}
=\{(g_0h,x)\in G\times M\mid h\in G(x)\} \\
&=g_0G(x)\times\{x\}
\subseteq G\times M=\Gc.
\end{aligned}$$
This shows that the condition that 
$G(x)$ is a Banach-Lie subgroup of $G$ for all $x\in M$ is equivalent to the assumption that $\mathcal{G}$ is split.  
Under this assumption
 the conclusion of Theorem~\ref{5.4}\eqref{smooth4_item2} recovers the classical fact that if $G(x)$ is a Banach-Lie subgroup of the Banach-Lie group $G$, then $G/G(x)$ is a smooth homogeneous Banach manifold (cf. for instance \cite[Th. 4.19]{B06}).  
 
For the above Banach-Lie groupoid $\Gc\tto M$, 
 Theorem~\ref{5.4}\eqref{smooth4_item2} says that $\Gc(x,y)$ is a closed submanifold of $\Gc$ if $y\in\Oc(x)$. 
 By the above description of $\Gc(x,y)$, 
 this property  
is equivalent to the fact that the isotropy group $G(x)$ is a closed submanifold of $G$, that is, $G(x)$ is a closed subset of $G$ and $G(x)$ has the structure of a Banach manifold for which the inclusion map $G(x)\hookrightarrow G$ is an immersion  
as in Subsection~\ref{subsect-nnH}. 
For the sake of completeness, we note that this conclusion 
(which {\bf does not} mean that $G(x)$ is a Banach-Lie subgroup of $G$ unless $\dim G<\infty$, cf. \cite[Ex. II.11]{Ho75}) can also be derived from an infinite-dimensional version of Cartan's theorem on closed subgroups \cite[Cor. 3.7]{B06}, 
since $G(x)$ is a closed subgroup of the Banach-Lie group~$G$. 
\end{example}

  %%%%%%%%%%%%%%%%%%%%%%%%%%%%%%%%%%%%%%%%%%%%%%%%%%%
  \section{Banach-Lie algebroids}\label{Sect4}
  %\label{secalgebroid}%\label{AG}
  %%%%%%%%%%%%%%%%%%%%%%%%%%%%%%%%%%%%%%%%%%%%%%%%%%%
	
  %%%%%%%%%%%%%%%%%%%%%%%%%%%%%%%%
   \subsection{Structure of Banach-Lie algebroid} 
   %%%%%%%%%%%%%%%%%%%%%%%%%%%%%%%%%%%%%%%%%%%
Let $\pi:\mathcal{A}\rightarrow M$ be a Banach vector bundle on a Banach manifold.

\begin{definition}
\label{D_AnchoredBanachBundle} 
\normalfont
A morphism of vector bundles $\rho:\mathcal{A}\rightarrow TM$ covering identity is
called an \emph{anchor}. 
The structure 
%vector bundle 
$(\mathcal{A},\pi,M,\rho)$ is then called 
an \emph{anchored Banach bundle}.

The above morphism $\rho$ induces a map $ \G(\mathcal{A})\to\G(M) $, again denoted by $\r$,
 defined on any section $s\in\G(\mathcal{A})$ by $(\r(s))(x):=(\r\circ s)(x)$  for every $x\in M$. 
\end{definition}
%We say that $(\mathcal{A},\pi,M,\rho)$ is an \emph{anchored Banach bundle}. 
%\bigskip
%\noindent\so{\it  Local expressions} :${}$\\

\begin{remark}
\normalfont
 If one has a local trivialization of the vector bundle $\pi\colon\Ac\to M$ in which the manifold $M$ is modeled on a Banach space $\mathbb{M}$ and the typical fiber of $\pi$ is a Banach space $\mathbb{A}$, then the anchor $\rho$ has the local expression  
%(see subsection \ref{loctriv}), 
%one has
$$%\begin{eqnarray}\label{loctrivr}
\r(x,u)\equiv (x,u) \mapsto (x,\r(x)(u))
$$%\end{eqnarray}
where $\r\colon U\to L(\mathbb{A},\mathbb{M})$ is a smooth map. 
\end{remark}

\begin{definition}
	\normalfont
A \emph{Lie bracket} on the anchored Banach bundle 
$(\mathcal{A},\pi,M,\rho)$ is a skew-symmetric $\RR$-bilinear map
 $[\cdot,\cdot]_{\mathcal{A}}:\Gamma(\mathcal{A})\times \Gamma(\mathcal{A})\ap \Gamma(\mathcal{A})$ satisfying the following conditions: 
 \begin{enumerate}
 \item Leibniz property:  $[\s_1,f\s_2]_{\mathcal{A}}=f[\s_1,\s_2]_{\mathcal{A}}+df(\rho(\s_1))\s_2$ 
 for all $f\in\Ci(M)$ and $\s_1, \s_2\in \Gamma(\mathcal{A})$.
 \item Jacobi identity:   $[\s_1,[\s_2,\s_3]_{\mathcal{A}}]_{\mathcal{A}}+[\s_2,[\s_3,\s_1]_{\mathcal{A}}]_{\mathcal{A}}+[\s_3,[\s_1,\s_2]_{\mathcal{A}}]_{\mathcal{A}}=0$ 
 for all $\s_1,\s_2,\s_3\in \Gamma({\mathcal{A}})$.
 \end{enumerate}
\end{definition}

\begin{remark}\label{LBA_early}
	\normalfont
 An anchored Banach bundle provided with a Lie bracket $[\cdot,\cdot]_{\mathcal{A}}$ as above was sometimes called  a Banach-Lie algebroid in the earlier literature 
 (for instance in \cite{An11})  
 and was denoted $({\mathcal{A}},M,\rho,[\cdot,\cdot]_{\mathcal{A}})$ if $\rho$ satisfies:
 $$\rho([\s_1,\s_2]_{\mathcal{A}})=[\rho(\s_1),\rho(\s_2)]
 \text{ for all }\s_1, \s_2\in\Gamma({\mathcal{A}})$$
 where $[\cdot,\cdot]$ denotes the usual Lie bracket of vector fields on~$M$.  

 If $({\mathcal{A}},M,\rho,[\cdot,\cdot]_{\mathcal{A}})$ is a Banach-Lie algebroid in this sense, then  $(\Gamma({\mathcal{A}}),[\cdot,\cdot]_{\mathcal{A}})$ is a Lie algebra and $\rho$ is 
a Lie algebra morphism from $(\Gamma({\mathcal{A}}),[\cdot,\cdot]_{\mathcal{A}})$ to the Lie algebra $(\Gamma(M),[\cdot,\cdot])$ of smooth vector fields on~$M$.
\end{remark} 

In this paper we prefer the definition of a Banach-Lie algebroid (Definition~\ref{LBA}) which, in addition to the Lie algebra morphism property of the anchor map from Remark~\ref{LBA_early}, 
also involves the localizability property that we will discuss right now. 

\begin{definition}
	\normalfont
A Lie  bracket  $[\cdot,\cdot]_{\mathcal{A}}$  on an anchored bundle $(\mathcal{A}, \pi, M, \r)$ respects the sheaf of sections of $\pi:\mathcal{A}\ap M$ or, for short,  is \emph{localizable}  (see for instance \cite{Mar}), if 
for every nonempty open subset $U\subseteq M$ one has a Lie bracket $[\cdot,\cdot]_U$ on the anchored bundle 
$(\mathcal{A}\vert_U, \pi\vert_{\mathcal{A}\vert_U}, U, \r\vert_{\mathcal{A}\vert_U}\colon \mathcal{A}\vert_U\to TU)$ satisfying the following conditions: 
\begin{enumerate}[(i)]
	\item For $U=M$ one has $[\cdot,\cdot]_M=[\cdot,\cdot]_{\Ac}$. 
	\item For any open subsets $V\subseteq U\subseteq M$ 
	and all $\s_1,\s_2\in \G(\mathcal{A}\vert_U)$ one has 
	$$[{\s_1}\vert_{V},{\s_2}\vert_{V}]_V=([\s_1,\s_2]_U)\vert_{ V}.$$  
\end{enumerate}
\end{definition}

\begin{remark}\label{smooth_reg}
\normalfont
In finite dimensions it is well known that every
Lie  bracket  $[\cdot,\cdot]_{\mathcal{A}}$  on an anchored bundle $(\mathcal{A}, \pi, M, \r)$ is localizable.  
(See for instance \cite{Mar}.) 
%in the sense that the following properties are satisfied:
%\begin{enumerate}[(i)]
%\item  for any open subset $U\subseteq M$, there exists a unique  bracket $[\cdot,\cdot]_U$ on the space of sections  $\G(\mathcal{A}\vert_{ U})$ such that, for any $\s_1,\s_2\in \G(\mathcal{A})$, we have:
%$$[{\s_1}\vert_{U},{\s_2}\vert_{U}]_U=([\s_1,\s_2]_{\mathcal{A}})\vert_{ U}$$
%\item  (compatibility with restriction) if $V\subset U$ are open sets, then, $[\cdot,\cdot]_U$  induces a unique  Lie bracket  $[\cdot,\cdot]_{UV}$ on $\G(\mathcal{A}\vert_{ V})$ which coincides with  $[\cdot,\cdot]_V$ (induced by $[\cdot,\cdot]_{\Ac}$).
%\end{enumerate}
To extend this fact to infinite dimensions, one needs the following notion: 
A \emph{bump function} on a Banach space $X$ is 
a smooth function on $\varphi\colon X\to \RR$ whose support is a bounded nonempty subset of $X$. 
See for instance \cite[Ch. III, \S 14]{KrMi} for background information on this notion. 

We claim that if the Banach space $X$ admits bump functions, then for every $x_0\in X$ there exist a bump function $\varphi_{x_0}\colon X\to\RR$ and an open neighborhood~$V$ of $x_0$ with $\varphi_{x_0}(x)=1$ for every $x\in V$. 
%In fact, let $\varphi\colon X\to \RR$ is a bump function then there exists $y_0\in X$ with $\varphi(y_0)\ne0$.  
%Replacing $\varphi$ by the function $x\mapsto\frac{1}{\varphi(y_0)}\varphi(x -x_0+y_0)$, 
Without loss of generality let us chose a bump function $\varphi\colon X\to \RR$ such that $\varphi(x_0)=1$. 
For convenience we will fix open neighborhoods $(1/2,3/2)\subset(1/3,5/3)$ of $1\in\RR$. 
Then there exist open neighborhoods $V\subseteq W$ of $x_0\in X$ with $\varphi(V)\subseteq(1/3,5/3)$ and $\varphi(W)\subseteq(1/2,3/2)$. 
If $\psi\colon\RR\to\RR$ is any smooth function satisfying $\psi(t)=0$ if $t\in\RR\setminus(1/2,3/2)$ and $\psi(t)=1$ for every $t\in[1/3,5/3]$, 
then $\varphi_{x_0}:=\psi\circ\varphi\colon X\to\RR$ is a smooth function satisfying $\varphi_{x_0}(x)=1$ for every $x\in V$. 
Moreover, using that $0\not\in(1/3,3/4)$,  
it is straightforward to check that the support of $\varphi_{x_0}$ is contained in the support of $\varphi$, hence $\varphi_{x_0}$ is a bump function as claimed.  

A Banach manifold $M$ is called \emph{smoothly regular} if 
the model space of $M$ at every point $x\in M$ is a Banach space that admits bump functions. 
By the same arguments as in finite dimensions and using the above observation on bump functions that are constant on a neighborhood of any given point, one can prove that if the Banach manifold $M$ is smoothly regular then any Lie bracket  $[\cdot,\cdot]_{\mathcal{A}}$  on an anchored bundle $(\mathcal{A},\pi,M,\r)$ is localizable.
(See \cite[Prop. 3.6]{CaPe} and also \cite{Pe}.) 

If $M$ is not smoothly regular, we cannot prove that any Lie bracket is localizable. 
Unfortunately in the Banach framework, we have {\bf no example of  Lie algebroid} whose Lie bracket is not localizable. 
Moreover we will see later that if a Banach-Lie algebroid is integrable then its Lie bracket is localizable. 
Therefore this condition is necessary in order to find conditions under which a Banach-Lie algebroid is integrable.  
\end{remark}

For these reasons we make the following definition 
(cf. \cite{Pe} and \cite{CaPe}).  

\begin{definition}\label{LBA} 
\normalfont
The structure of an anchored Banach bundled endowed with a Lie bracket  $({\mathcal{A}},{M},\rho,[\cdot,\cdot]_{\mathcal{A}})$  is called a \emph{Banach-Lie algebroid} 
if $[\cdot,\cdot]_{\mathcal{A}}$ is a localizable Lie bracket and $\rho$ is a Lie algebra morphism.

When  $\pi\colon{\mathcal{A}}\ap M$ is a pure Banach bundle then $({\mathcal{A}},{M},\rho,[\cdot,\cdot]_{\mathcal{A}})$ will be called a \emph{pure Banach-Lie algebroid} which corresponds to  the definition of Banach-Lie algebroid given in \cite{Pe} and \cite{CaPe}. 

Consider two Lie algebroids  $({\mathcal{A}},{M},\rho,[\cdot,\cdot]_{\mathcal{A}})$ and $({\mathcal{A}'},{M}',\rho',[\cdot,\cdot]_{\mathcal{A}'})$. 
A Banach bundle morphism $\Phi\colon {\mathcal{A}}\ap {\mathcal{A}'}$ over a map $\phi\colon M\ap M'$ is called a \emph{Banach-Lie algebroid morphism} if it satisfies the conditions 
$$T\phi\circ\rho=\rho'\circ \Phi
\text{ and }
\Phi ([\s_1,\s_2]_{\mathcal{A}})=[\Phi(\s_1),\Phi(\s_2)]_{\mathcal{A}'}
\text{ for all }\s_1,\s_2\in\G(\mathcal{A}).$$
\end{definition}

The following notion of admissibility is used in the proof of Lemma~\ref{linkorbit} below, 
and it thus plays a key role for Theorem~\ref{linkorbitG}. 

\begin{definition}
	\label{LBA-orbit} 
	\normalfont
If $({\mathcal{A}},{M},\rho,[\cdot,\cdot]_{\mathcal{A}})$ is a Banach-Lie algebroid,  
then a piecewise smooth curve $c\colon\RR\supset[a,b]\ap {M}$ is called an \emph{${\mathcal{A}}$-admissible curve} if there exists piecewise smooth curve $\alpha \colon[a,b]\ap \mathcal{A}$ with $\pi(\alpha)=c$ and $\rho( \alpha(t))=\dot{c}(t)$ for each $t\in [a,b]$ at which $c$ is smooth.  
In these conditions $\alpha$ is called an \emph{${\mathcal{A}}$-lift} of $c$ 
and more generally $\alpha$ is called an \emph{$\mathcal{A}$-path}. 
We define 
%an equivalence 
a binary relation on $M$ associated to $({\mathcal{A}},M,\rho,[\cdot,\cdot]_{\mathcal{A}})$ 
in the following way:
$$x\sim y
%\iff 
\Leftrightarrow
\text{there exists }\gamma\colon[a,b] \ap {M}
\text{ ${\mathcal{A}}$-admissible with } \g(a)=x\text{ and }\g(b)=y.
%\exists\,  (\text{${\mathcal{A}}$-admissible } \g:[a,b] \ap {M}) \ \g(a)=x\text{ and }\g(b)=y.
$$
Since any concatenation of two piecewise smooth paths is in turn piecewise smooth, the above is an equivalence relation, 
and the equivalence class of $x$ is called the \emph{${\mathcal{A}}$-orbit of $x$} 
with respect to the Banach-Lie algebroid $({\mathcal{A}},{M},\rho,[\cdot,\cdot]_{\mathcal{A}})$.
\end{definition}

\begin{lemma}\label{reparam}
Let $\alpha$ be an ${\mathcal A}$-lift of the curve $c$, both these
curves being defined on some interval $J$.
Then for any smooth map $\phi\colon J\to J$, the curve
$(\alpha\circ\phi)\dot{\phi}$ is an ${\mathcal A}$-lift of $c\circ\phi$.
\end{lemma}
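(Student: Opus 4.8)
The statement is a routine but essential change-of-variables identity, so the plan is to verify directly that $(\alpha\circ\phi)\dot\phi$ satisfies the two defining conditions of an $\mathcal{A}$-lift of $c\circ\phi$ from Definition~\ref{LBA-orbit}: namely that it projects under $\pi$ to $c\circ\phi$, and that its image under the anchor $\rho$ equals the derivative of $c\circ\phi$ wherever the latter is smooth. First I would observe that for each $t\in J$ the vector $(\alpha\circ\phi)(t)\,\dot\phi(t)$ lies in the fibre $\mathcal{A}_{c(\phi(t))}$, because $\alpha(\phi(t))\in\mathcal{A}_{\pi(\alpha(\phi(t)))}=\mathcal{A}_{c(\phi(t))}$ by hypothesis $\pi\circ\alpha=c$, and multiplying by the scalar $\dot\phi(t)$ keeps us in the same fibre since $\mathcal{A}\to M$ is a vector bundle. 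Hence $\pi\big((\alpha\circ\phi)(t)\,\dot\phi(t)\big)=c(\phi(t))=(c\circ\phi)(t)$, which is the first condition; it also shows that the new curve is a well-defined piecewise smooth curve into $\mathcal{A}$ (smoothness being clear on each subinterval where $\phi$, $\dot\phi$ and $\alpha$ are smooth).

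Next I would check the anchor condition. Fix a point $t\in J$ at which $c\circ\phi$ is smooth; by shrinking to a subinterval we may assume $\phi$ is smooth near $t$ and, by the piecewise hypothesis on $c$, that $c$ is smooth near $\phi(t)$ (the finitely many bad parameter values for $c\circ\phi$ are exactly the preimages under $\phi$ of the bad values of $c$ together with the non-smooth points of $\phi$). Using that $\rho\colon\mathcal{A}\to TM$ is fibrewise linear, we have
$$
\rho\big((\alpha\circ\phi)(t)\,\dot\phi(t)\big)
=\dot\phi(t)\,\rho\big(\alpha(\phi(t))\big)
=\dot\phi(t)\,\dot c(\phi(t)),
$$
where the last equality is the $\mathcal{A}$-lift hypothesis $\rho(\alpha(s))=\dot c(s)$ applied at $s=\phi(t)$. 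On the other hand the chain rule gives $\tfrac{d}{dt}(c\circ\phi)(t)=\dot\phi(t)\,\dot c(\phi(t))$, so the two expressions agree. This establishes $\rho\big((\alpha\circ\phi)\dot\phi\big)=\tfrac{d}{dt}(c\circ\phi)$ at every parameter where $c\circ\phi$ is smooth, completing the verification.

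The only point requiring a little care — and the nearest thing to an obstacle — is the bookkeeping of "piecewise smooth": one should note explicitly that the set of parameters at which $c\circ\phi$ fails to be smooth is finite, being contained in $\phi^{-1}(S)\cup S'$ where $S$ is the (finite) set of non-smooth points of $c$ and $S'$ the (finite) set of non-smooth points of $\phi$, and that on each complementary subinterval the chain rule applies in the ordinary sense, so the identity $\rho((\alpha\circ\phi)\dot\phi)=(c\circ\phi)\dot{}$ holds there. Everything else is the linearity of $\rho$ on fibres, the bundle-map property $\pi\circ\rho$-compatibility implicit in Definition~\ref{D_AnchoredBanachBundle}, and the one-dimensional chain rule, none of which poses any difficulty in the Banach setting.
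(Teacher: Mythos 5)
Your proposal is correct and is essentially the paper's own argument: both verify the two defining conditions of an $\mathcal{A}$-lift directly, using the chain rule $(c\circ\phi)^{\textstyle\cdot}=(\dot c\circ\phi)\dot\phi$, the fibrewise linearity of $\rho$ to pull $\dot\phi(t)$ inside, and the fact that $\pi(tv)=\pi(v)$ for the projection condition. The extra bookkeeping you add about where $c\circ\phi$ fails to be smooth is not in the paper (which argues purely formally) and is harmless, though note that for a non-injective $\phi$ the set $\phi^{-1}(S)$ need not be finite; this does not affect the verification required by Definition~\ref{LBA-orbit}.
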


\begin{proof}
Since $\alpha$ is an ${\mathcal A}$-lift of $c$, one has
$c=\pi\circ\alpha$ and $\rho\circ\alpha=\dot{c}$,
and these equalities imply:
\begin{enumerate}
\item $\pi\circ(\alpha\circ\phi)=c\circ\phi$;
\item $\rho\circ(\alpha\circ\phi)=\dot{c}\circ\phi$.
\end{enumerate}
Now one obtains
$(c\circ\phi)^{\textstyle\cdot}
=(\dot{c}\circ\phi)\dot{\phi}
                       =(\rho\circ(\alpha\circ\phi))\dot{\phi}
                       =\rho\circ((\alpha\circ\phi)\dot{\phi})$
where the second equality follows by 2.\ above.
To check the second equality needed for $(\alpha\circ\phi)\dot{\phi}$
to be an ${\mathcal A}$-lift of $c\circ\phi$
one just has to note that
$\pi((\alpha\circ\phi)\dot{\phi})=\pi(\alpha\circ\phi)=c\circ\phi$
where the first and last equality follow by 1.\ above, while the
second equality follows by the fact that $\pi(tv)=\pi(v)$ for any real
$t$ and any $v\in{\mathcal A}$.
\end{proof}

\begin{lemma}\label{smooth-lift}
	For each $y$ in 
	%belonging to 
	the $\mathcal A$-orbit of $x$ there exists an $\mathcal A$-admissible smooth curve $\gamma\colon[a,b]\to M$ with a smooth $\mathcal A$-lift such that $\gamma(a)=x$ and $\gamma(b)=y$.
\end{lemma}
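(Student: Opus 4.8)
The plan is to start from the definition of the $\mathcal{A}$-orbit: by Definition~\ref{LBA-orbit}, $y$ lies in the $\mathcal{A}$-orbit of $x$ exactly when there is a piecewise smooth $\mathcal{A}$-admissible curve $\gamma_0\colon[a,b]\to M$ with $\gamma_0(a)=x$, $\gamma_0(b)=y$, and some piecewise smooth $\mathcal{A}$-lift $\alpha_0$ of $\gamma_0$. So there are two defects to repair: first, $\gamma_0$ (and $\alpha_0$) are only \emph{piecewise} smooth, not smooth; second, we want the lift to be smooth as well. I would address both defects by a reparametrization argument, using Lemma~\ref{reparam}.

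First I would handle the piecewise-to-smooth passage on a single smooth piece. Suppose $\gamma_0$ is smooth on $[t_{i-1},t_i]$ with lift $\alpha_0$ smooth there. The idea is to precompose with a smooth, monotone, surjective ``flattening'' map $\phi\colon[t_{i-1},t_i]\to[t_{i-1},t_i]$ that is constant near each endpoint (so all derivatives of $\phi$ vanish at $t_{i-1}$ and $t_i$); such bump-type reparametrization functions exist on $\RR$ regardless of smooth regularity of the model space, since they are functions of one real variable. By Lemma~\ref{reparam}, $(\alpha_0\circ\phi)\dot\phi$ is then an $\mathcal{A}$-lift of $\gamma_0\circ\phi$, and because $\dot\phi$ and all its derivatives vanish at the endpoints, both $\gamma_0\circ\phi$ and $(\alpha_0\circ\phi)\dot\phi$ extend smoothly (by constants on the relevant sides) across the breakpoints. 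Doing this on each of the finitely many pieces $[t_{i-1},t_i]$ of a partition $a=t_0<t_1<\dots<t_n=b$ adapted to both $\gamma_0$ and $\alpha_0$, and then concatenating, produces a genuinely smooth curve $\gamma\colon[a,b]\to M$ together with a smooth curve $\alpha\colon[a,b]\to\mathcal{A}$ that is an $\mathcal{A}$-lift of $\gamma$: indeed $\pi\circ\alpha=\gamma$ holds piecewise hence globally, and $\rho\circ\alpha=\dot\gamma$ holds on each piece and at the breakpoints both sides are zero, so it holds on all of $[a,b]$. Since the reparametrizations are surjective onto each subinterval, the new curve still runs from $\gamma(a)=x$ to $\gamma(b)=y$.

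Finally I would observe that $\gamma$ is $\mathcal{A}$-admissible essentially by construction, because $\alpha$ is an $\mathcal{A}$-lift of it and $\gamma$ is everywhere smooth, so the condition ``$\rho(\alpha(t))=\dot\gamma(t)$ at every $t$ where $\gamma$ is smooth'' holds at all $t\in[a,b]$. This gives exactly the asserted smooth $\mathcal{A}$-admissible curve with a smooth $\mathcal{A}$-lift joining $x$ to $y$.

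The routine but slightly delicate point, and the one I would write out with a little care, is the gluing at the breakpoints: one must choose the flattening functions so that enough derivatives vanish at each endpoint for the concatenated curves to be $C^\infty$ (not just $C^1$) across each $t_i$, and one must make sure that the identity $\rho\circ\alpha=\dot\gamma$ survives the concatenation. Both are standard facts about smooth bump functions on $\RR$ applied componentwise in a chart, and they do not require the model Banach space to admit bump functions, which is why this lemma holds in full generality. No genuine obstacle is expected beyond this bookkeeping.
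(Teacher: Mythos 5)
Your proof is correct and follows essentially the same strategy as the paper's: reparametrize by a smooth map whose derivatives of all orders vanish at the breakpoints so that the curve and its lift become smooth, invoking Lemma~\ref{reparam} to transport the $\mathcal A$-lift. The only (cosmetic) difference is that you flatten each smooth piece separately and concatenate, whereas the paper uses a single smooth homeomorphism $\phi\colon[a,b]\to[a,b]$ with vanishing derivatives at the finitely many bad points; both versions are sound.
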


\begin{proof}
By the definition of an $\mathcal A$-orbit there exists a piecewise smooth curve $c\colon[a,b]\to\mathcal A$ such that $c(a)=x$ and $c(b)=y$ with a piecewise smooth $\mathcal A$-lift~$\alpha$.
There exists a smooth bijection $\phi\colon[a,b]\to[a,b]$ for which
both $c\circ\phi$ and $\alpha\circ\phi$ are smooth. 
As $\phi$ one may take any smooth homeomorphism whose derivatives of
arbitrary order vanish at the finitely many points where $c$ or $\alpha$ fail to be smooth. 
Then from Lemma \ref{reparam} it follows that $\gamma = c\circ\phi$ is the required smooth curve with its smooth $\mathcal A$-lift $(\alpha\circ\phi)\dot{\phi}$.
\end{proof}

%In this sense 
Lemma~\ref{smooth-lift} shows that the algebroid orbits introduced in Definition \ref{LBA-orbit} can  be equivalently defined using only smooth paths that are $\mathcal A$-admissible 
%smooth curve to the one 
in the sense defined in \cite{CraFe03} for the case of finite-dimensional Lie algebroids.

\begin{definition}\label{split}
\normalfont  
A Banach-Lie algebroid $({\mathcal{A}},{M},\rho,[\cdot,\cdot]_{\mathcal{A}})$ is called 
\emph{split on a connected component}  ${N}$ of ${M}$ if for each $x\in {N}$ the kernel of $\rho_x=\rho\vert_{ \pi^{-1}(x)}$ is 
%complemented in 
a split subspace of $ \pi^{-1}(x)$. 
% and the distribution $\rho(E)$  on $N$ is closed. 
The algebroid is called \emph{split} if it is split on all the connected components of ${M}$.
\end{definition}

 If $({\mathcal{A}},{M},\rho,[\cdot,\cdot]_{\mathcal{A}})$ is a finite-dimensional Lie algebroid,  it is well known that each ${\mathcal{A}}$-orbit is a smooth immersed manifold of $M$. In the Banach context from \cite[Th. 5]{Pe} applied to each subbundle $\pi:{\mathcal{A}}_\alpha\ap \pi({\mathcal{A}}_\alpha)$ we draw the following conclusion. 

\begin{theorem}\label{orbitalgebroid} 
If $({\mathcal{A}},{M},\rho,[\cdot,\cdot]_{\mathcal{A}})$ is a split Banach-Lie algebroid and the distribution $\rho(\mathcal{A})$  on $M$ is closed, then each orbit is 
an immersed   
%closed Banach 
submanifold  of $M$. 
\end{theorem}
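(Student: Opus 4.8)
The plan is to reduce the statement to the known integrability result \cite[Th. 5]{Pe} for pure Banach-Lie algebroids, by decomposing $\Ac$ into its connected components. First I would observe that $\Ac=\bigsqcup_{\alpha}\Ac_\alpha$, where each $\Ac_\alpha$ is a connected component; by the discussion in Subsection~\ref{subsect-nnH}, the restriction $\pi\colon\Ac_\alpha\to\pi(\Ac_\alpha)=:M_\alpha$ is a \emph{pure} Banach vector bundle over a connected component $M_\alpha$ of $M$, and $\Ac_\alpha$ is then an anchored pure Banach bundle with anchor $\rho_\alpha:=\rho\vert_{\Ac_\alpha}$. The bracket $[\cdot,\cdot]_\Ac$, being localizable, restricts to a localizable Lie bracket $[\cdot,\cdot]_{\Ac_\alpha}$ on $\G(\Ac_\alpha)$ over the open subset $M_\alpha\subseteq M$ (here I would use condition (ii) of localizability to see that the restricted bracket lands in $\G(\Ac_\alpha)$ and is well defined), and the anchor property $\rho_\alpha([\s_1,\s_2]_{\Ac_\alpha})=[\rho_\alpha(\s_1),\rho_\alpha(\s_2)]$ is inherited. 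Thus each $(\Ac_\alpha,M_\alpha,\rho_\alpha,[\cdot,\cdot]_{\Ac_\alpha})$ is a pure Banach-Lie algebroid in the sense of \cite{Pe} and \cite{CaPe}.

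Next I would transfer the two hypotheses of the theorem to each piece. The splitting hypothesis says precisely that for each $x\in M_\alpha$ the kernel of $(\rho_\alpha)_x=\rho\vert_{\pi^{-1}(x)}$ is a split (closed, complemented) subspace of the fiber $\pi^{-1}(x)$; this is exactly the regularity condition needed to apply \cite[Th. 5]{Pe} to $\Ac_\alpha$. The closedness hypothesis on the distribution $\rho(\Ac)\subseteq TM$ restricts, on $M_\alpha$, to closedness of $\rho_\alpha(\Ac_\alpha)\subseteq TM_\alpha$: indeed $\rho_\alpha(\Ac_\alpha)=\rho(\Ac)\cap TM_\alpha$ since $TM_\alpha$ is open and closed in $TM$, so if $\rho(\Ac)$ has closed image fiberwise then so does its restriction. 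With both hypotheses in place, \cite[Th. 5]{Pe} gives that the $\Ac_\alpha$-orbits partition $M_\alpha$ into immersed submanifolds modeled on the corresponding $\ker(\rho_\alpha)_x$, each carrying a (pure) Banach manifold structure with smooth injective immersion into $M_\alpha$.

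Finally I would glue these back together. Since the $M_\alpha$ are pairwise disjoint open subsets whose union is $M$, and since any $\Ac$-admissible curve has connected image hence lies entirely in a single $M_\alpha$ (a curve cannot jump between distinct connected components of $M$), the $\Ac$-orbit of a point $x\in M_\alpha$ coincides with its $\Ac_\alpha$-orbit. Here I would invoke Lemma~\ref{smooth-lift} to know that $\Ac$-admissibility may be tested with smooth curves and smooth lifts, matching the setup of \cite[Th. 5]{Pe}; the $\Ac$-lift $\alpha$ of a curve $c\colon[a,b]\to M_\alpha$ automatically takes values in $\Ac_\alpha$ because $\pi(\alpha)=c$ and $\alpha$ is continuous, so $\alpha$ is an $\Ac_\alpha$-path. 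Therefore each $\Ac$-orbit is an $\Ac_\alpha$-orbit for the appropriate $\alpha$, hence an immersed submanifold of $M_\alpha$, and since $M_\alpha\hookrightarrow M$ is an open embedding, it is an immersed submanifold of $M$.

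\textbf{Main obstacle.} I expect the only non-routine point to be the bookkeeping around localizability: one must check carefully that restricting the global bracket to a connected component yields a bona fide Banach-Lie algebroid structure on $\Ac_\alpha$ over the \emph{open} (not merely closed) subset $M_\alpha$, so that \cite[Th. 5]{Pe}—which is stated for pure Banach-Lie algebroids—genuinely applies; the footnote-style remark in the proof of Theorem~\ref{5.4} that such results are local in nature is the kind of justification that will be needed here. The rest is decomposition into connected components and transport of hypotheses, which is straightforward.
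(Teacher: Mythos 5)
Your proposal is correct and follows essentially the same route as the paper, which simply applies \cite[Th. 5]{Pe} to each pure subbundle $\pi\colon\Ac_\alpha\to\pi(\Ac_\alpha)$ obtained by restricting to a connected component. The extra bookkeeping you carry out (restricting the localizable bracket, transferring the splitness and closed-distribution hypotheses, and noting that admissible curves stay in one component) is exactly the justification the paper leaves implicit.
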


%%%%%%%%%%%%%%%%%%%%%%%%%%%%%%%%%%%%%%%%%%%%%
\subsection{The Banach-Lie algebroid of a n.n.H. Banach-Lie groupoid}
  %%%%%%%%%%%%%%%%%%%%%%%%%%%%%%%%%%%%%%%%%%%%

The main result of this subsection is Theorem~\ref{algbroidgroupoid} on 
the construction of the Banach-Lie algebroid of a n.n.H. Banach-Lie   groupoid $\Gc\tto M$. 
This is formally the same as in the case of 
finite-dimensional Lie groupoids (cf. \cite{Ma}) 
and, for infinite-dimensional Lie groupoids modelled on locally convex spaces it was considered in \cite{SW15} and \cite{SW16}. 
%Therefore we only adapt the classical construction to our context.
However,  since the basis~$M$ may not be smoothly regular  
(see Remark~\ref{smooth_reg}), 
the localizability property of the Lie bracket must be proved separately. 
This is a technical aspect that occurs neither in the case of finite-dimensional Lie groupoids nor in the case of Banach-Lie groups. 
In order to deal with this problem we use the following notion.

\begin{definition}\label{right_act}
	\normalfont
	Let $\Gc\tto M$ be a n.n.H. Banach-Lie groupoid, 
	$H$ be a n.n.H. Banach manifold, and $\sigma\colon H\to M$ be a surjective submersion. 
	Then the set 
	$H\ast \Gc:=\{(h,g)\in H\times\Gc\mid\sigma(h)={\bf t}(g)\}$ 
	is the inverse image of the diagonal of $M\times M$ though the submersion 
	$(\sigma,{\bf t})\colon H\times\Gc\to M\times M$, hence 
	$H\ast \Gc$ is a closed split submanifold of $H\times\Gc$. 
	
	In the above framework,  a \emph{right action of $\Gc$ on $H$} is a smooth map $H\ast \Gc\to H$, $(h,g)\mapsto h.g$, satisfying the following conditions: 
	\begin{itemize}
		\item If $g_1,g_2\in\Gc$ with ${\bf s}(g_1)={\bf t}(g_2)$, and $h\in H$ with $\sigma(h)={\bf t}(g_1)$, 
		then $h.(g_1g_2)=(h.g_1).g_2$. 
		\item If $(h,g)\in H\ast\Gc$ then $\sigma(h.g)=\sigma(h)$. 
	\end{itemize}
	
	We denote $T^\sigma H:=\Ker(T\sigma)\subseteq TH$, hence $T^\sigma H$ is an integrable distribution on~$H$, 
	since $\sigma\colon H\to M$ is a submersion.  
	We denote by $\Gamma(T^\sigma H)$ the vector space of smooth sections of $T^\sigma H$ 
	regarded as a sub-bundle of the tangent bundle $TH\to H$. 
	
	For every $g\in\Gc$ one has the diffeomorphism 
	\begin{equation}\label{diffeo_R}
	R_g\colon \sigma^{-1}({\bf t}(g))\to\sigma^{-1}({\bf s}(g)),\quad h\mapsto h.g.
	\end{equation}
	We denote by $\Gamma_{\rm inv}(H)$ the set of all vector fields $X\in \Gamma(T^\sigma H)$ 
	satisfying the invariance condition 
	\begin{equation}\label{invar_R}
	X_{h.g}=(T_h(R_g))(X_h)\text{ for all }g\in\Gc\text{ and all } h\in\sigma^{-1}({\bf t}(g)). 
	\end{equation}
\end{definition}

%\begin{example}\label{grpd_loc}
%\normalfont
%Let $\Gc\tto M$ is a n.n.H. Banach-Lie groupoid, 
%$U\subseteq M$ be a nonempty open subset, and define $\Gc^U:={\bf t}^{-1}(U)\subseteq\Gc$. 
%
%Then   $H:=\Gc^U$ is a n.n.H. Banach manifold, and $\sigma:={\bf s}\vert_{\Gc^U}\colon \Gc^U\to M$ is a surjective submersion. 
%For these data, it is easily checked that the groupoid multiplication defines by restriction 
%a right action $H\ast\Gc\to H$. 
%\end{example}

\begin{remark}
	\normalfont
	Assume the setting of Definition~\ref{right_act}. 
	Since $T^\sigma H$ is an integrable distribution, 
	it follows that $\Gamma(T^\sigma H)$ is a Lie algebra of smooth vector fields on~$H$. 
	%, called the \emph{Lie algebra of vertical vector fields}.  
	We also note that if $X\in\Gamma(T^\sigma H)$ and $h\in H$, 
	then $\sigma^{-1}(\sigma(h))\subseteq H$ is a closed submanifold and one has 
	$X_h\in T_h(\sigma^{-1}(\sigma(h)))\subseteq T_h H$. 
\end{remark}

We prove the following lemma for the sake of completeness 
---its proof is similar to that of its counterpart from the construction of the Lie algebroid of a finite-dimensional Lie groupoid \cite{Ma05}. 

\begin{lemma}\label{3_right} 
	In the setting of Definition~\ref{right_act}, 
	the following assertions hold. 
	\begin{enumerate}[{\rm(i)}]
		\item\label{3_right_item1} 
		Let $U\subseteq H$ be a subset such that  
		$H=\{u.g\mid (u,g)\in H\ast\Gc\text{ and }u\in U\}$. 
		If $X$ is a smooth vector field on $H$, 
		%$X\in\Gamma(T^\sigma H)$, 
		then one has 
		$X\in \Gamma_{\rm inv}(H)$ if and only if 
		$X_u\in T_u(\sigma^{-1}(\sigma(u)))$ and $X_{u.g}=TR_g(X_u)$ for all $(u,g)\in H\ast\Gc$ with $u\in U$.  
		%and $X_{1_x}\in T\mathcal{G}(x,-)$ for all $x\in M$.
		In particular, the value of a right invariant vector field is determined by its values at the points of~$U$.
		\item\label{3_right_item2} 
		If $X,Y\in\Gamma_{\rm inv}(H)$, then $[X,Y]\in\Gamma_{\rm inv}(H)$.
		\item\label{3_right_item3} %label{invarianttarget_item1}  
		$\Gamma_{\rm inv}(H)$ is a subalgebra of the Lie algebra $\Gamma(T^\sigma H)$. 
	\end{enumerate}
\end{lemma}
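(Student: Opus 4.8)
The plan is to prove the three assertions in turn, the common mechanism being that a right invariant vector field $X$ on $H$, being a section of the integrable distribution $T^\sigma H$, is the same thing as a coherent family of vector fields $X^m:=X\vert_{\sigma^{-1}(m)}$ on the leaves $\sigma^{-1}(m)$, $m\in M$, intertwined by the diffeomorphisms $R_g$ of \eqref{diffeo_R}; recall that each such leaf is a closed submanifold of $H$.

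For \eqref{3_right_item1}, the ``only if'' implication is immediate: if $X\in\Gamma_{\rm inv}(H)$ then $X\in\Gamma(T^\sigma H)$, so $X_u\in T_u(\sigma^{-1}(\sigma(u)))$, and \eqref{invar_R} specialized to $h=u\in U$ reads $X_{u.g}=T_u(R_g)(X_u)$. For the converse I would argue as follows. Assume the two displayed conditions hold for all $(u,g)\in H\ast\Gc$ with $u\in U$. Given an arbitrary $h\in H$, the hypothesis on $U$ provides $u\in U$ and $(u,g)\in H\ast\Gc$ with $h=u.g$; then $X_h=T_u(R_g)(X_u)$, and since $R_g$ is a diffeomorphism between $\sigma$-fibers sending $u$ to $h$ and $X_u$ is tangent to the fiber through $u$, we obtain $X_h\in T_h(\sigma^{-1}(\sigma(h)))$, i.e.\ $X\in\Gamma(T^\sigma H)$. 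To get \eqref{invar_R} in general, fix $g'\in\Gc$ and $h\in\sigma^{-1}({\bf t}(g'))$ and write $h=u.g$ as above; the composability constraints of $\Gc$ then force $gg'$ to be defined, $(u,gg')\in H\ast\Gc$ and $h.g'=u.(gg')$, so that, using $R_{gg'}=R_{g'}\circ R_g$,
\[
\begin{aligned}
X_{h.g'}=X_{u.(gg')}
&=T_u(R_{gg'})(X_u)\\
&=T_{u.g}(R_{g'})\bigl(T_u(R_g)(X_u)\bigr)=T_h(R_{g'})(X_h).
\end{aligned}
\]
This is \eqref{invar_R}, hence $X\in\Gamma_{\rm inv}(H)$. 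The closing ``in particular'' follows because the identity $X_{u.g}=T_u(R_g)(X_u)$ recovers $X$ everywhere from its restriction to $U$.

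For \eqref{3_right_item2}, the point is that \eqref{invar_R} says exactly that for every $g\in\Gc$ the diffeomorphism $R_g\colon\sigma^{-1}({\bf t}(g))\to\sigma^{-1}({\bf s}(g))$ pushes $X^{{\bf t}(g)}$ forward to $X^{{\bf s}(g)}$, i.e.\ $(R_g)_*X^{{\bf t}(g)}=X^{{\bf s}(g)}$. I would then invoke the standard fact --- valid for Banach manifolds and checked in charts adapted to the submersion $\sigma$ --- that the Lie bracket of two vector fields tangent to the fibers of $\sigma$ is again tangent to those fibers and restricts leafwise, $[X,Y]\vert_{\sigma^{-1}(m)}=[X^m,Y^m]$. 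Combining this with the naturality of the Lie bracket under diffeomorphisms gives, for each $g\in\Gc$,
\[
\begin{aligned}
(R_g)_*[X,Y]^{{\bf t}(g)}
&=(R_g)_*[X^{{\bf t}(g)},Y^{{\bf t}(g)}]\\
&=\bigl[(R_g)_*X^{{\bf t}(g)},(R_g)_*Y^{{\bf t}(g)}\bigr]\\
&=[X^{{\bf s}(g)},Y^{{\bf s}(g)}]=[X,Y]^{{\bf s}(g)},
\end{aligned}
\]
which is \eqref{invar_R} for $[X,Y]$; moreover $[X,Y]\in\Gamma(T^\sigma H)$ by the same fact, so $[X,Y]\in\Gamma_{\rm inv}(H)$.

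Assertion \eqref{3_right_item3} is then formal: $\Gamma_{\rm inv}(H)$ is a linear subspace of $\Gamma(T^\sigma H)$ since the conditions of Definition~\ref{right_act} are linear in the vector field, $\Gamma(T^\sigma H)$ is a Lie algebra because $T^\sigma H$ is integrable, and closure under the bracket is \eqref{3_right_item2}. I do not expect any step to be substantial; the one point that needs genuine care is the leafwise restriction of the Lie bracket used in \eqref{3_right_item2}, which is the infinite-dimensional version of a routine fact and is exactly where the hypothesis that $\sigma$ is a submersion --- so that $T^\sigma H$ is integrable --- comes in.
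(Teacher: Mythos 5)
Your proof is correct and follows essentially the same route as the paper's: the converse in \eqref{3_right_item1} by writing $h=u.g$ and transporting by $T_u(R_g)$, and \eqref{3_right_item2} by observing that the leafwise restrictions of $X$ and $Y$ are $R_g$-related and invoking naturality of the Lie bracket under the diffeomorphisms \eqref{diffeo_R}. The only difference is that you spell out the verification of \eqref{invar_R} via $R_{gg'}=R_{g'}\circ R_g$ and $h.g'=u.(gg')$, a step the paper dismisses as ``straightforward to check.''
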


\begin{proof} 
	If $X\in \Gamma_{\rm inv}(H)$ then it clearly satisfies the conditions from the statement. 
	Conversely, let us assume that $X$ is a smooth vector field  satisfying these conditions. 
	For arbitrary $h\in H$ there exists $(u,g)\in H\ast\Gc$ with $u\in U$ and $h=u.g$. 
	Then $X_h=(T_u(R_g))(X_u)$ with $X_u\in T_u(\sigma^{-1}(\sigma(u)))=T_u(\sigma^{-1}({\bf t}(g)))$. 
	Using the diffeomorphism~\eqref{diffeo_R} and the fact that $R_g(u)=u.g=h$, we then obtain 
	$X_h\in T_h(\sigma^{-1}({\bf s}(g)))\subseteq T^\sigma H$. 
	Thus $X\in\Gamma(T^\sigma H)$. 
	It is straightforward to check that $X$ also verifies the invariance condition~\eqref{invar_R}, 
	hence $X\in\Gamma_{\rm inv}(H)$.

	Now assume that $X,Y\in\Gamma_{\rm inv}(H)$. 
	In particular $X,Y\in\Gamma(T^\sigma H)$ hence, 
	since the distribution $T^\sigma H\subseteq TH$ is integrable, 
	we obtain $[X,Y]\in\Gamma(T^\sigma H)$. 
	For any $g\in \Gc$, it follows by \eqref{invar_R} 
	that the restrictions of $X$ to the manifolds $\sigma^{-1}({\bf s}(g))$ and $\sigma^{-1}({\bf t}(g))$ are $R_g$-related, using the diffeomorphism~\eqref{diffeo_R}. 
	Similarly, the restrictions  of $Y$ to the manifolds $\sigma^{-1}({\bf s}(g))$ and $\sigma^{-1}({\bf t}(g))$ are $R_g$-related. 
	Since both $X$ and $Y$ are tangent to the manifolds $\sigma^{-1}({\bf s}(g))$ and $\sigma^{-1}({\bf t}(g))$, 
	it then follows that 
	the restrictions  of $[X,Y]$ to the manifolds $\sigma^{-1}({\bf s}(g))$ and $\sigma^{-1}({\bf t}(g))$ are $R_g$-related. 
	This is equivalent to the fact that $[X,Y]$ satisfies the invariance condition~\eqref{invar_R} with $X$ replaced by $[X,Y]$. 
	Thus $[X,Y]\in\Gamma_{\rm inv}(H)$. 
	
	The third assertion in the statement follows by the second assertion, and this concludes the proof. 
\end{proof}

\begin{proposition}\label{invarianttarget_right} 
	%In the setting of Example~\ref{grpd_loc}, 
	Let $\Gc\tto M$ be a n.n.H. Banach-Lie groupoid, 
	$U\subseteq M$ be a nonempty open subset, 
	and define $\Gc^U:={\bf t}^{-1}(U)\subseteq\Gc$.

	Then the  following assertions hold.  
	\begin{enumerate}[(i)]
		\item\label{invarianttarget_right_item1}  
		For  $H:=\Gc^U$ and $\sigma:={\bf s}\vert_{\Gc^U}\colon \Gc^U\to M$, the groupoid multiplication defines by restriction 
		a right action $H\ast\Gc\to H$. 
		\item\label{invarianttarget_right_item2}  
		If  $X \in  \Gamma_{\rm inv}(\Gc^U)$, then 
		%$T{\bf t}(X)$ is projectable, i.e., 
		$T{\bf t}(X_g)=T{\bf t}(X_{{\bf 1}_{{\bf t}(g)}})$
		for all $g\in \mathcal{G}$.
		\item\label{invarianttarget_right_item3}   
		$T{\bf t}$ induces a morphism of Lie algebras from $\Gamma_{\rm inv}(\Gc^U)$ into 
		%$\Xi(U)$ 
		%(:= 
		the Lie algebra of vector fields on $U$. 
	\end{enumerate}
\end{proposition}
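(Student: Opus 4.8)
The plan is to verify~(i) directly from the groupoid axioms, to extract~(ii) from the invariance condition~\eqref{invar_R} by evaluating it at a unit arrow, and to build the Lie algebra morphism in~(iii) out of~(ii), Lemma~\ref{3_right}, and the standard compatibility of Lie brackets with related vector fields.

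For~(i) I would first note that $\Gc^U={\bf t}^{-1}(U)$ is open in~$\Gc$, hence an open n.n.H. Banach submanifold, and that $\sigma={\bf s}\vert_{\Gc^U}$ is a submersion, being a restriction of the submersion~${\bf s}$ to an open set. If $(h,g)\in H\ast\Gc$, then ${\bf s}(h)=\sigma(h)={\bf t}(g)$, so $hg$ is defined, and ${\bf t}(hg)={\bf t}(h)\in U$ gives $hg\in\Gc^U=H$; thus $(h,g)\mapsto h.g:=hg$ is a map $H\ast\Gc\to H$, smooth because it is a restriction of the smooth multiplication~${\bf m}$ (see (BLG4)) to the submanifold $H\ast\Gc$. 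The equality $h.(g_1g_2)=(h.g_1).g_2$ is the associativity of~${\bf m}$, while $\sigma(h.g)={\bf s}(hg)={\bf s}(g)$, so this is a right action in the sense of Definition~\ref{right_act}, with the diffeomorphism~\eqref{diffeo_R} given here by $R_g(h)=hg$.

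For~(ii), given $g\in\Gc^U$ I would put $h_0:={\bf 1}_{{\bf t}(g)}$ and note $h_0\in\Gc^U$, $\sigma(h_0)={\bf s}(h_0)={\bf t}(g)$, and $R_g(h_0)=h_0g={\bf 1}_{{\bf t}(g)}g=g$. Feeding the pair $(h_0,g)$ into~\eqref{invar_R} yields $X_g=(T_{h_0}R_g)(X_{h_0})$, whence $T_g{\bf t}(X_g)=T_{h_0}({\bf t}\circ R_g)(X_{h_0})$ by the chain rule. The key observation is that ${\bf t}\circ R_g$ agrees with the restriction of~${\bf t}$ to the submanifold $\sigma^{-1}({\bf t}(g))\subseteq\Gc$, since ${\bf t}(kg)={\bf t}(k)$; because $X\in\Gamma(T^\sigma\Gc^U)$ forces $X_{h_0}\in T_{h_0}(\sigma^{-1}({\bf t}(g)))$, one obtains $T_{h_0}({\bf t}\circ R_g)(X_{h_0})=T_{h_0}{\bf t}(X_{h_0})$, that is, $T{\bf t}(X_g)=T{\bf t}(X_{{\bf 1}_{{\bf t}(g)}})$.

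For~(iii), assertion~(ii) shows that $(\Tc X)_x:=T_{{\bf 1}_x}{\bf t}(X_{{\bf 1}_x})$ does not depend on the choice of a point in the fibre ${\bf t}^{-1}(x)$, so $\Tc\colon\Gamma_{\rm inv}(\Gc^U)\to\Xi(U)$ is a well-defined $\RR$-linear map, and $\Tc X$ is a smooth vector field because ${\bf 1}\colon M\to\Gc$ is smooth (Proposition~\ref{1}). By~(ii), $X$ and $\Tc X$ are ${\bf t}\vert_{\Gc^U}$-related, and ${\bf t}\vert_{\Gc^U}\colon\Gc^U\to U$ is surjective since ${\bf t}(\Gc^U)=U$. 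For $X,Y\in\Gamma_{\rm inv}(\Gc^U)$, the standard fact (valid for Banach manifolds by a computation in charts) that Lie brackets of related vector fields are related gives that $[X,Y]$ is ${\bf t}\vert_{\Gc^U}$-related to $[\Tc X,\Tc Y]$; since $[X,Y]\in\Gamma_{\rm inv}(\Gc^U)$ by Lemma~\ref{3_right}\eqref{3_right_item2}, it is also ${\bf t}\vert_{\Gc^U}$-related to $\Tc([X,Y])$, and surjectivity of ${\bf t}\vert_{\Gc^U}$ forces $\Tc([X,Y])=[\Tc X,\Tc Y]$. I expect the one genuinely substantive step to be the specialization in~(ii)---choosing ${\bf 1}_{{\bf t}(g)}$ as base point so that $R_g$ carries it onto~$g$, and recognizing that ${\bf t}\circ R_g$ is merely~${\bf t}$ restricted to a $\sigma$-fibre; the remaining verifications are bookkeeping with the groupoid axioms and with related vector fields.
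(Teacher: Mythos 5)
Your proof is correct and takes essentially the same route as the paper: part~(ii) rests on the identical observation that ${\bf t}\circ R_g$ coincides with ${\bf t}$ on the fibre $\sigma^{-1}({\bf t}(g))$ after evaluating the invariance condition at ${\bf 1}_{{\bf t}(g)}$, and part~(iii) is the standard related--vector--fields argument that the paper compresses into a reference to Lemma~\ref{3_right}. Your computation $\sigma(h.g)={\bf s}(g)$ in~(i) is the correct moment-map identity (consistent with the codomain of~\eqref{diffeo_R}); the clause $\sigma(h.g)=\sigma(h)$ in Definition~\ref{right_act} appears to be a slip in the paper, so the mismatch there is not an error on your part.
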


\begin{proof} For Assertion~\eqref{invarianttarget_right_item1} we only need that ${\bf s},{\bf t}\colon\Gc\to M$ are surjective submersions. 
	
	For Assertion~\eqref{invarianttarget_right_item2}  consider $g\in \Gc^U$ and set $x={\bf s}(g)$ and $y={\bf t}(g)$. Then we have:
	$$T{\bf t}(X_g)=T{\bf t}(X_{{\bf 1}_yg})=T{\bf t}\circ TR_g(X_{{\bf 1}_y})=T({\bf t}\circ R_g)(X_{{\bf 1}_y}).$$
	Since ${\bf t}_x\circ R_g={\bf t}_y$
	%${\bf t}\circ R_g={\bf t}(g)$ 
	we get $T{\bf t}(X_g)=T{\bf t}(X_{{\bf 1}_y})$. 
	
	Assertion~\eqref{invarianttarget_right_item3} follows by 
	Assertions \eqref{invarianttarget_right_item1}--\eqref{invarianttarget_right_item2} 
	along with Lemma~\ref{3_right}.
\end{proof}

%  Denote by $T^{\bf s}\mathcal{G}\ap \mathcal{G}$ the vertical bundle, i.e., the kernel of the projection $T\mathcal{G}\ap \mathcal{G}$. 
For any nonempty open subset $U\subseteq M$ 
we  now define the Banach vector bundle $\pi_U\colon (\mathcal{AG})_U\to U $  as the pullback of 
the Banach bundle $T^{\bf s}(\Gc^U)\to\Gc^U$ 
through the map ${\bf 1}\colon U\to\Gc^U$.   
(If we identify $U$ with its image through the map  ${\bf 1}\colon U\to\Gc$, then we may say that $(\mathcal{AG})_U$ is the 
restriction of  $T^{\bf s}(\Gc^U)$ to $U$.) 
Note that $(\mathcal{AG})_U$ is a Banach manifold since $M$ is Hausdorff. 

One has a natural Banach bundle morphism $(\1_U)_*\colon(\mathcal{AG})_U\to T^{\bf s}(\Gc^U) $ 
over the map ${\bf 1}\colon U\to\Gc^U$, from the bundle  $(\mathcal{AG}_U)\to U$ to the bundle $T^{\bf s}(\Gc^U)\to \Gc^U$,  which is a bundle isomorphism onto the restriction of the bundle $T^{\bf s}(\Gc^U)\to\Gc^U$ to $\1(U)\subseteq\Gc^U$. 
On the other hand, let  $\Gamma_{\1}(T^{\bf s}(\Gc^U))$ be the vector space of all smooth sections of  the restriction of the bundle $T^{\bf s}(\Gc^U)\to\Gc^U$ to $\1(U)\subseteq\Gc^U$. 
By Lemma~\ref{3_right} we obtain a Lie bracket on $\Gamma_{\bf 1}(T^{\bf s}(\Gc^U))$ with a Lie algebra isomorphism   
$\Gamma_{\bf 1}(T^{\bf s}(\Gc^U))\to\Gamma_{\rm inv}(\Gc^U)$. 
Now the aforementioned bundle ismorphism  $(\1_U)_*$ gives rise to 
a linear isomorphism $\Gamma((\mathcal{AG})_U)\to\Gamma_{\bf 1}(T^{\bf s}(\Gc^U))$. 
We thus obtain a linear isomorphism 
\begin{equation}\label{isom_right}
\Gamma((\mathcal{AG})_U)\to \Gamma_{\rm inv}(\Gc^U),\quad 
X\mapsto\widetilde{X}
\end{equation}
and we thus see that there exists a unique Lie bracket $[\cdot,\cdot]_U$ on 
the vector space $\Gamma((\mathcal{AG})_U)$ for which the above  linear isomorphism $X\mapsto\widetilde{X}$ is a Lie algebra isomorphism. 
For later reference we note that 
\begin{equation}\label{bracket_right}
\widetilde{[X,Y]}=[\widetilde{X},\widetilde{Y}]
\text{ for all }X,Y\in\Gamma((\mathcal{AG})_U)
\end{equation}
and 
\begin{equation}\label{fY_right}
\widetilde{fX}=(f\circ{\bf t})\tilde{X} 
\text{ for all }X\in\Gamma((\mathcal{AG})_U)\text{ and }f\in C^\infty(M).
\end{equation} 

\begin{proposition}\label{algebroid_right}
	For every open subset $U\subseteq M$, the map 
	$$\rho_U:=T{\bf t}\circ (\1_U)_*\colon (\mathcal{AG})_U\to TU$$ is a bundle morphism over $\id_{U}$ 
	whose corresponding map  $\rho_U\colon\Gamma((\mathcal{AG})_U)\to\Gamma(TU)$  is a morphism of Lie algebras. 
	Moreover for any $f\in C^\infty(M)$ and $X, Y\in \Gamma((\mathcal{AG})_U)$ we have the Leibniz property:
	$$[X,fY]_U=f[X,Y]_U+\de f(\rho_U(X))Y.$$
\end{proposition}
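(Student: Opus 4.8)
The plan is to transport everything through the Lie algebra isomorphism~\eqref{isom_right} and to use the results already established in Proposition~\ref{invarianttarget_right}. First I would check that $\rho_U$ is a bundle morphism over $\id_U$: by construction $(\1_U)_*$ is a bundle morphism over $\1\colon U\to\Gc^U$ from $(\mathcal{AG})_U\to U$ to $T^{\bf s}(\Gc^U)\to\Gc^U$, and $T{\bf t}\colon T^{\bf s}(\Gc^U)\to TM$ is a bundle morphism over ${\bf t}\colon\Gc^U\to M$; since ${\bf t}\circ\1=\id_U$, the composition $\rho_U=T{\bf t}\circ(\1_U)_*$ is a bundle morphism over $\id_U$, and its image lies in $TU$ because ${\bf t}(\Gc^U)=U$. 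At the level of sections, unwinding the definitions, for $X\in\Gamma((\mathcal{AG})_U)$ one has $\rho_U(X)={\bf t}$-pushforward of the restriction of $\widetilde X$ to $\1(U)$, which by Proposition~\ref{invarianttarget_right}\eqref{invarianttarget_right_item2} equals the unique vector field on $U$ that is ${\bf t}$-related to $\widetilde X$ on all of $\Gc^U$; call it $T{\bf t}(\widetilde X)$.

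Next I would prove that $\rho_U\colon\Gamma((\mathcal{AG})_U)\to\Gamma(TU)$ is a Lie algebra morphism. By~\eqref{bracket_right} the isomorphism $X\mapsto\widetilde X$ satisfies $\widetilde{[X,Y]_U}=[\widetilde X,\widetilde Y]$, where the right-hand bracket is the bracket of vector fields in $\Gamma_{\rm inv}(\Gc^U)$. Proposition~\ref{invarianttarget_right}\eqref{invarianttarget_right_item3} says precisely that $T{\bf t}$ induces a Lie algebra morphism $\Gamma_{\rm inv}(\Gc^U)\to\Gamma(TU)$, i.e.\ $T{\bf t}([\widetilde X,\widetilde Y])=[T{\bf t}(\widetilde X),T{\bf t}(\widetilde Y)]$. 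Combining these with $\rho_U(X)=T{\bf t}(\widetilde X)$ gives
$$\rho_U([X,Y]_U)=T{\bf t}(\widetilde{[X,Y]_U})=T{\bf t}([\widetilde X,\widetilde Y])=[T{\bf t}(\widetilde X),T{\bf t}(\widetilde Y)]=[\rho_U(X),\rho_U(Y)],$$
which is the claim.

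Finally, for the Leibniz identity I would compute $[X,fY]_U$ using~\eqref{bracket_right} and~\eqref{fY_right}. One has $\widetilde{[X,fY]_U}=[\widetilde X,\widetilde{fY}]=[\widetilde X,(f\circ{\bf t})\widetilde Y]$, and since the bracket on the right is the ordinary bracket of vector fields on $\Gc^U$, the classical Leibniz rule gives $[\widetilde X,(f\circ{\bf t})\widetilde Y]=(f\circ{\bf t})[\widetilde X,\widetilde Y]+\bigl(\widetilde X(f\circ{\bf t})\bigr)\widetilde Y$. Now $\widetilde X(f\circ{\bf t})=\de(f\circ{\bf t})(\widetilde X)=\de f(T{\bf t}(\widetilde X))=\de f(\rho_U(X))$ by the chain rule and the fact that $\widetilde X$ is ${\bf t}$-related to $\rho_U(X)$; here I must be slightly careful that $\de f(\rho_U(X))$ is a function on $U$, and its pullback along ${\bf t}$ is what appears, so that $\bigl(\widetilde X(f\circ{\bf t})\bigr)\widetilde Y = \widetilde{\de f(\rho_U(X))\,Y}$ again by~\eqref{fY_right}. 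Applying the inverse isomorphism then yields $[X,fY]_U=f[X,Y]_U+\de f(\rho_U(X))Y$. The only mild obstacle is bookkeeping the distinction between functions on $U$ and their pullbacks to $\Gc^U$ and checking that $\widetilde X$ restricted to $\1(U)$ really produces $\rho_U(X)$ as claimed; everything else is a formal translation through the isomorphism~\eqref{isom_right} of identities that hold for ordinary vector fields.
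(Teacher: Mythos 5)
Your proposal is correct and follows essentially the same route as the paper: the bundle-morphism claim from the definition of $\rho_U$, the Lie algebra morphism property via the isomorphism~\eqref{isom_right} together with Proposition~\ref{invarianttarget_right}\eqref{invarianttarget_right_item3}, and the Leibniz identity by the same computation with~\eqref{bracket_right}, \eqref{fY_right} and the ordinary Leibniz rule for vector fields, transported back through the inverse isomorphism. You merely spell out a few details (the composition over $\id_U$, the ${\bf t}$-relatedness of $\widetilde X$ and $\rho_U(X)$) that the paper leaves implicit.
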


\begin{proof}  The definition of $\rho_U$  implies clearly that it is a bundle morphism.  From the construction of the isomorphism~\eqref{isom_right} and  
	Proposition \ref{invarianttarget_right}\eqref{invarianttarget_right_item3} it follows that $\rho_U$ induces a morphism of Lie algebras as indicated in the statement.  
	Using \eqref{bracket_right} and \eqref{fY_right} we have 
	\allowdisplaybreaks
	\begin{align}
	\widetilde{[X,fY]}
	&=[\widetilde{X},\widetilde{fY}] \nonumber\\
	&=[\widetilde{X},(f\circ{\bf t})\widetilde{Y}] \nonumber\\
	&=(f\circ{\bf t})[\widetilde{X},\widetilde{Y}]+\de (f\circ{\bf t})(\widetilde{X})\widetilde{Y} \nonumber\\
	&=(f\circ{\bf t})\widetilde{[X,Y]}+\de f(T{\bf t}(\widetilde{X}))\widetilde{Y} \nonumber\\
	&=\widetilde{(f\circ{\bf t})[X,Y]}+\widetilde{\de f(\rho(X))Y}, 
	\nonumber
	\end{align} 
	(see \cite[page 152]{MoMr}). 
	Using the isomorphism~\eqref{isom_right} again, we are done.
\end{proof}

\begin{notation}
	\normalfont
	In Proposition~\ref{invarianttarget_right}, if $U=M$ then $\Gc^U=\Gc$. 
	Therefore, in the above constructions we drop $U$ from the notation whenever $U=M$. 
	We thus denote $\Ac\Gc$ instead of $(\Ac\Gc)_M$, 
	and we obtain a Lie bracket $[\cdot,\cdot]$ (rather than $[\cdot,\cdot]_M$) on $\Gamma(\Ac\Gc)$. 
	Also, we use the notation $\rho:=\rho_M\colon\Ac\Gc\to TM$ 
	for the map defined in Proposition~\ref{algebroid_right}. 
\end{notation}

We are now in a position to prove the following theorem, 
which lays the foundations of a Lie theory for n.n.H. Banach-Lie groupoids. 

\begin{theorem}\label{algbroidgroupoid}
	For every n.n.H. Banach-Lie groupoid  $\Gc\tto M$, 
	the structure $(\mathcal{AG},M,\rho,[\cdot,\cdot])$ is a Banach-Lie algebroid.
\end{theorem}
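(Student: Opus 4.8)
The structure $(\mathcal{AG}, M, \rho, [\cdot,\cdot])$ is already known to be an anchored Banach bundle endowed with a Lie bracket on global sections: $\pi_M\colon\mathcal{AG}\to M$ is a Banach vector bundle, $\rho=\rho_M$ is a vector bundle morphism over $\id_M$ by Proposition~\ref{algebroid_right}, and $[\cdot,\cdot]=[\cdot,\cdot]_M$ is the Lie bracket on $\Gamma(\mathcal{AG})$ coming via the isomorphism~\eqref{isom_right} from the Lie algebra $\Gamma_{\rm inv}(\Gc)$ (Lemma~\ref{3_right}). Moreover Proposition~\ref{algebroid_right} already records the Leibniz identity, and the fact that $\rho_M\colon\Gamma(\mathcal{AG})\to\Gamma(TM)$ is a Lie algebra morphism. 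Hence, according to Definition~\ref{LBA}, the only remaining point is that the bracket $[\cdot,\cdot]$ is \emph{localizable}, i.e.\ that the family $\{[\cdot,\cdot]_U\}_U$ constructed above for nonempty open $U\subseteq M$ actually fulfills the two compatibility axioms: $[\cdot,\cdot]_M=[\cdot,\cdot]_{\mathcal{AG}}$ (which holds by the very definitions and the convention in the Notation) and the restriction condition $[\sigma_1|_V,\sigma_2|_V]_V=([\sigma_1,\sigma_2]_U)|_V$ for $V\subseteq U\subseteq M$.

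First I would set up the comparison of the isomorphisms~\eqref{isom_right} for nested open sets. For $V\subseteq U$ one has $\Gc^V={\bf t}^{-1}(V)\subseteq{\bf t}^{-1}(U)=\Gc^U$ as an open subset, and $\Gc^V$ is itself a n.n.H.\ Banach-Lie groupoid situation in the sense of Proposition~\ref{invarianttarget_right} with base $V$; the bundle $T^{\bf s}(\Gc^V)$ is simply the restriction of $T^{\bf s}(\Gc^U)$ to the open set $\Gc^V$, and similarly $(\mathcal{AG})_V$ is the restriction of $(\mathcal{AG})_U$ to $V$ via ${\bf 1}$. The plan is to show that the diagram relating $\Gamma((\mathcal{AG})_U)\to\Gamma_{\rm inv}(\Gc^U)$ and $\Gamma((\mathcal{AG})_V)\to\Gamma_{\rm inv}(\Gc^V)$ commutes with the restriction maps; concretely, that if $\widetilde{X}$ denotes the right-invariant vector field on $\Gc^U$ associated to $X\in\Gamma((\mathcal{AG})_U)$, then the restriction $\widetilde{X}|_{\Gc^V}$ is precisely the right-invariant vector field on $\Gc^V$ associated to $X|_V\in\Gamma((\mathcal{AG})_V)$. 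This is essentially immediate from the definition of $\Gamma_{\rm inv}$ (Definition~\ref{right_act}): right-invariance is a pointwise condition expressed through the right translations $R_g$, which on $\Gc^V$ are restrictions of those on $\Gc^U$, and a right-invariant vector field is determined by its values along ${\bf 1}(U)$ by Lemma~\ref{3_right}\eqref{3_right_item1}; restricting to $\Gc^V$ corresponds exactly to restricting the determining values to ${\bf 1}(V)$.

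Once that compatibility is in place, the restriction axiom follows: for $\sigma_1,\sigma_2\in\Gamma((\mathcal{AG})_U)$, one has $\widetilde{[\sigma_1,\sigma_2]_U}=[\widetilde{\sigma_1},\widetilde{\sigma_2}]$ on $\Gc^U$ by~\eqref{bracket_right}; restricting to $\Gc^V$ and using that the Lie bracket of vector fields is local gives $[\widetilde{\sigma_1},\widetilde{\sigma_2}]|_{\Gc^V}=[\widetilde{\sigma_1}|_{\Gc^V},\widetilde{\sigma_2}|_{\Gc^V}]=[\widetilde{\sigma_1|_V},\widetilde{\sigma_2|_V}]=\widetilde{[\sigma_1|_V,\sigma_2|_V]_V}$, so applying the inverse of the isomorphism~\eqref{isom_right} for $V$ yields $([\sigma_1,\sigma_2]_U)|_V=[\sigma_1|_V,\sigma_2|_V]_V$. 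I would then observe that the Leibniz property and the anchor being a Lie algebra morphism for each $U$ are already covered by Proposition~\ref{algebroid_right} applied with that $U$, so all the clauses of Definition~\ref{LBA} are verified.

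The main obstacle, modest as it is, is the bookkeeping showing that the isomorphism~\eqref{isom_right} is natural in $U$: one must be careful that ``right-invariant vector field on $\Gc^U$'' really does restrict to ``right-invariant vector field on $\Gc^V$'', since a priori the invariance condition~\eqref{invar_R} on $\Gc^V$ only quantifies over $g\in\Gc$ with ${\bf t}(g)\in V$, whereas on $\Gc^U$ it quantifies over all $g\in\Gc$ with ${\bf t}(g)\in U$. But this is not a real difficulty: the condition $X_{h.g}=T_h(R_g)(X_h)$ for $h\in\sigma^{-1}({\bf t}(g))$ with ${\bf t}(g)\in V$ is implied by the same condition with ${\bf t}(g)\in U$, and conversely a right-invariant vector field on $\Gc^V$ determines (and is determined by) its restriction to ${\bf 1}(V)$, which is all that is needed to match it with a section of $(\mathcal{AG})_V$. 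So the entire argument is a routine naturality check on top of the constructions already carried out in Propositions~\ref{invarianttarget_right}--\ref{algebroid_right}.
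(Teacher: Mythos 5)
Your proposal is correct and follows essentially the same route as the paper: reduce everything to localizability via Proposition~\ref{algebroid_right}, observe that $(\Ac\Gc)_V$ is the restriction of $(\Ac\Gc)_U$ because $\Gc^V$ is open in $\Gc^U$ and $T^{\bf s}(\Gc^V)$ is the restriction of $T^{\bf s}(\Gc^U)$, and then use locality of the Lie bracket of vector fields together with the compatibility of the isomorphisms $X\mapsto\widetilde{X}$ with restriction. The naturality check you spell out (that the restriction of a right-invariant field on $\Gc^U$ to $\Gc^V$ is right-invariant and corresponds to the restricted section) is exactly the commutative diagram the paper invokes, so there is no substantive difference.
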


\begin{proof}
	It follows by Proposition~\ref{algebroid_right} that for every open subset $U\subseteq M$ one has a Lie bracket $[\cdot,\cdot]_U$ on the Banach anchored bundle $((\Ac\Gc)_U,\pi_U,U,\rho_U)$. 
	It follows directly by the construction that $\pi_U\colon (\Ac\Gc)_U\to U$ is the restriction of the bundle $\pi=\pi_M\colon \Ac\Gc\to M$ to~$M$. 
	
	We still need to prove that the Lie bracket $[\cdot,\cdot]$ on $\Gamma(\Ac\Gc)$ is localizable, and to this end we must check that for any open subsets $V\subseteq U\subseteq M$ the following conditions are satisfied: 
	\begin{itemize}
		\item 
		The vector bundle $(\Ac\Gc)_V$ is the restriction of the vector bundle $(\Ac\Gc)_U$ to~$V$. 
		\item The restriction map 
		$\Gamma((\Ac\Gc)_U)\to\Gamma((\Ac\Gc)_V)$, 
		$\sigma\mapsto\sigma\vert_V$, is a morphism of Lie algebras with respect to the Lie brackets $[\cdot,\cdot]_U$ and $[\cdot,\cdot]_V$ on 	$\Gamma((\Ac\Gc)_U)$ and $\Gamma((\Ac\Gc)_V)$, respectively. 
		%and all $\sigma_1,\sigma_2\in\Gamma((\Ac\Gc)_U)$ one has 
		%$[\sigma_1\vert_{V},\sigma_2\vert_V]_V=([\sigma_1,\sigma_2]_U)\vert_V$. 
	\end{itemize}
	For the first of these conditions we note that $\Gc^V$ is an open subset of $\Gc^U$, and the vector bundle $T^{\bf s}(\Gc^V)\to\Gc^V$ is equal to the vector bundle $T^{\bf s}(\Gc^U)\to\Gc^U$ 
	restricted to $\Gc^V$. 
	One then uses the definition of the vector bundles $(\Ac\Gc)_V$ and $(\Ac\Gc)_U$. 
	For the second of the above conditions we note the commutative diagram 
	$$\begin{tikzcd}[column sep=large]
	\Gamma((\Ac\Gc)_U) \arrow{r}{X\mapsto\widetilde{X}} \arrow{d} & 
	\Gamma_{\rm inv}(\Gc^U)  \arrow[r, hook] \arrow{d} & 
	\Xi(\Gc^U) \arrow{d}\\
	\Gamma((\Ac\Gc)_V) \arrow{r}{X\mapsto\widetilde{X}} & 
	\Gamma_{\rm inv}(\Gc^U) \arrow[r, hook] & 
	\Xi(\Gc^V)
	\end{tikzcd}$$
	where we denoted by $\Xi(N)$ the Lie algebra of all smooth vector fields on any n.n.H. Banach manifold~$N$, 
	the vertical arrows are restriction maps, 
	and the maps $X\mapsto\widetilde{X}$ are Lie algebra isomorphisms as in~\eqref{isom_right}. 
	Since  $\Gc^V$ is an open subset of $\Gc^U$, it is well known that the restriction map $\Xi(\Gc^U)\to\Xi(\Gc^V)$ is a Lie algebra morphism, and then by the above commutative diagram we obtain that the restriction map $\Gamma((\Ac\Gc)_U)\to\Gamma((\Ac\Gc)_V)$ is a Lie algebra morphism, too. 
	This completes the proof. 
\end{proof}

%We say that $(\mathcal{AG},M,\rho,[\cdot,\cdot])$ is the {\it Banach-Lie algebroid associated to the Banach-Lie groupoid  $\mathcal{G}\tto M$}.  
% For the  sake of simplicity  this algebroid will be {\bf simply denoted $\mathcal{AG}$}. 

\begin{remark}\label{summary}
\normalfont
Given a n.n.H. Banach-Lie groupoid $\Gc\tto M$, 
the construction of the Banach-Lie algebroid $(\mathcal{AG},M,\rho,[\cdot,\cdot])$ in Theorem~\ref{algbroidgroupoid} can be summarized as follows: 
\begin{itemize}
	\item The Banach vector bundle $\pi\colon \Ac\Gc\to M$ has its fiber $(\Ac\Gc)_x=T_{\1_x}(\Gc(x,-))$ at any $x\in M$. 
	\item The anchor $\rho\colon \Ac\Gc\to TM$ 
	is a morphism of vector bundles 
	whose fiber at $x\in M$ is the differential of the target map 
	${\bf t}_x={\bf t}\vert_{\Gc(x,-)}\colon \Gc(x,-)\to M$ at the point $\1_x\in\Gc(x,-)$. 
\end{itemize}
Hence, just as in the special case of Banach-Lie groups, the underlying structure of the anchored bundle of a Banach-Lie algebroid of a n.n.H. Banach-Lie groupoid 
 does not require invariant vector fields. 
These vector fields are only needed in order to define the Lie bracket $[\cdot,\cdot]$ on $\Gamma(\Ac\Gc)$ for which the anchor $\rho$ is a morphism of Lie algebras, which extends one of the methods to define the Lie bracket on the Lie algebra of a Lie group.  
We note however that there is no essential difference between the total space $\Ac\Gc$ and the section space $\Gamma(\Ac\Gc)$ in the case of Lie groups, where the base of the vector bundle $\Ac\Gc$ is a singleton $M=\{\1\}$. 
See also Remark~\ref{functors} below. 
\end{remark}

Using the above remark we now make the following definition. 

\begin{definition}\label{Lie_functor}
\normalfont 
If $\Gc\tto M$ is a n.n.H. Banach-Lie groupoid, then 
the Banach-Lie algebroid $(\mathcal{AG},M,\rho,[\cdot,\cdot])$ constructed in Theorem~\ref{algbroidgroupoid} is said to be \emph{the Banach-Lie algebroid associated to the n.n.H. Banach-Lie groupoid  $\mathcal{G}\tto M$}.  
For the  sake of simplicity  this algebroid will be {\bf simply denoted $\mathcal{AG}$}. 

Let $\Hc\tto M$ be another n.n.H. Banach-Lie groupoid 
and $\Phi\colon\Gc\to\Hc$ be a morphism of Banach-Lie groupoids over $\id_M$. 
Then ${\bf s}\circ \Phi={\bf s}$, 
hence for every $x\in M$ one has $\Phi(\Gc(x,-))\subseteq\Hc(x,-)$, and we may define the bounded linear operator 
$$(\Ac\Phi)_x\colon (\Ac\Gc)_x\to(\Ac\Hc)_x,\quad 
(\Ac\Phi)_x:=(T_{\1_x}\Phi)\vert_{(\Ac\Gc)_x}\colon  (\Ac\Gc)_x\to (\Ac\Hc)_x$$
We then define $\Ac\Phi\colon\Ac\Gc\to\Ac\Hc$ as the morphism of Banach vector bundles over $\id_M$ whose restriction to the fiber $(\Ac\Gc)_x$ is the above operator $(\Ac\Phi)_x$ for every $x\in M$. 
\end{definition}

\begin{remark}\label{functors}
	\normalfont
In Definition~\ref{Lie_functor}, for every $x\in M$ one has the commutative diagram 
\[\begin{tikzcd}[column sep=large]
\Gc(x,-) \arrow{r}{\Phi\vert_{\Gc(x,-)}} \arrow{d}[swap]{{\bf t}}
&\Hc(x,-) \arrow[d,"{\bf t}"] \\
M  \arrow[r, "\id_M"] 
&M
\end{tikzcd}
\]
from which, computing the tangent maps at $\1_x\in\Gc(x,-)$, 
one obtains 
\[\begin{tikzcd}[column sep=large]
(\Ac\Gc)_x \arrow{r}{(\Ac\Phi)_x} \arrow{d}[swap]{\rho_x}
&(\Ac\Hc)_x \arrow[d,"\rho_x"] \\
T_xM  \arrow[r, "\id_{T_xM}"] 
& T_xM
\end{tikzcd}
\]
which is again a commutative diagram, 
where we denoted by $\rho$ the anchor maps of both Banach-Lie algebroids $\Ac\Gc\to M$ and $\Ac\Hc\to M$. 

In addition to the above, one can check that $\Ac\Phi\colon\Ac\Gc\to\Ac\Hc$ is a morphism of Banach-Lie algebroids over $\id_M$, 
just as in finite dimensions. 
(See for instance \cite[Prop. 3.5.10]{Ma05}.)
Moreover, let $\mathbb{GRPD}_M$ be  the category of n.n.H. Banach-Lie groupoids with the base $M$, in which the morphisms are the morphisms of Banach-Lie groupoids over $\id_M$. 
Also let  $\mathbb{ALGBD}_M$ be  the category of Banach-Lie algebroids with the base $M$, in which the morphisms are the morphisms of Banach-Lie algebroids over $\id_M$. 
Then one can also check that 
$\Ac\colon \mathbb{GRPD}_M\to\mathbb{ALGBD}_M$ 
is a functor, which is equal to the Lie functor from Banach-Lie groups to Banach-Lie algebras in the special case when $M=\{\1\}$ is a singleton, with the Lie bracket defined via right-invariant vector fields on Banach-Lie groups. 
(See \cite[\S 4]{SW15} for a more complete discussion.)
\end{remark}

In the sequel, we will need the following result:

\begin{proposition}\label{kerrho} 
	For any $y\in M$ the kernel of the restriction of $\rho$ to the fiber $\mathcal{AG}_y=\pi^{-1}(y)$ is exactly ${\bf 1}_*^{-1}(\ker T_{{\bf 1}_y}{\bf s}\cap \ker T_{{\bf 1}_y}{\bf t})$. 
In particular $\mathcal{AG}$ is split on the connected component ${N}$ of $M$ if and only if 
$\ker T_{{\bf 1}_y}{\bf s}\cap \ker T_{{\bf 1}_y}{\bf t}$ is %complemented in 
a split subspace of 
$\ker T_{{\bf 1}_y}{\bf s}$ for all $y\in {N}$. In particular $\mathcal{G}$ is split if and only if $\mathcal{AG}$ is split.
\end{proposition}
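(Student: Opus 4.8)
The plan is to dispose of the three claims one after another. The description of $\ker(\rho\vert_{\Ac\Gc_y})$ is a direct unwinding of the construction of the anchor in Proposition~\ref{algebroid_right}: the fibrewise anchor factors as $\rho_y=T_{\1_y}{\bf t}\circ({\bf 1}_*)_y$, where $({\bf 1}_*)_y\colon\Ac\Gc_y\to T_{\1_y}(\Gc(y,-))=\ker T_{\1_y}{\bf s}$ is a topological linear isomorphism; hence $v\in\ker\rho_y$ precisely when $({\bf 1}_*)_y(v)\in\ker T_{\1_y}{\bf t}$, and since $({\bf 1}_*)_y(v)$ always lies in $\ker T_{\1_y}{\bf s}$ this says exactly that $({\bf 1}_*)_y(v)\in\ker T_{\1_y}{\bf s}\cap\ker T_{\1_y}{\bf t}$, which is the asserted preimage. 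The first ``in particular'' is then immediate from Definition~\ref{split}, because $({\bf 1}_*)_y$ is a topological linear isomorphism taking $\Ac\Gc_y$ onto $\ker T_{\1_y}{\bf s}$ and $\ker\rho_y$ onto $\ker T_{\1_y}{\bf s}\cap\ker T_{\1_y}{\bf t}$, and the property of being a split (i.e.\ complemented) subspace is invariant under such isomorphisms.

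Next I would reformulate ``$\Ac\Gc$ split'' entirely at the unit points, using two elementary facts about split closed subspaces: a subspace that is split in the ambient space is split in every intermediate subspace containing it, and a subspace split in a split subspace is itself split in the ambient space. Since ${\bf s}$ and ${\bf t}$ are submersions, both $\ker T_{\1_x}{\bf s}$ and $\ker T_{\1_x}{\bf t}$ are split in $T_{\1_x}\Gc$; applying the two facts to the chains $\ker T_{\1_x}{\bf s}\cap\ker T_{\1_x}{\bf t}\subseteq\ker T_{\1_x}{\bf s}\subseteq T_{\1_x}\Gc$ and $\ker T_{\1_x}{\bf s}\cap\ker T_{\1_x}{\bf t}\subseteq\ker T_{\1_x}{\bf t}\subseteq T_{\1_x}\Gc$ shows that, for each fixed $x$, the following conditions are equivalent: $\ker T_{\1_x}{\bf s}\cap\ker T_{\1_x}{\bf t}$ is split in $\ker T_{\1_x}{\bf s}$; it is split in $T_{\1_x}\Gc$; it is split in $\ker T_{\1_x}{\bf t}$. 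Combining this with the first ``in particular'' and with the identity $T_{\1_x}(\Gc(x))=\ker T_{\1_x}{\bf s}\cap\ker T_{\1_x}{\bf t}$ from Theorem~\ref{5.4}\eqref{smooth4_item1}, one gets that ``$\Ac\Gc$ is split'' is equivalent to the statement that $T_{\1_x}(\Gc(x))$ is a split subspace of $T_{\1_x}\Gc$ for every $x\in M$.

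With this reformulation, one direction of the last equivalence is free: if $\Gc$ is split then, taking $x=y$ in the splitness condition of Definition~\ref{BLG}, the isotropy group $\Gc(x)=\Gc(x,x)$ is a (closed split) submanifold of $\Gc$, so the range of the tangent map of $\Gc(x)\hookrightarrow\Gc$ at $\1_x$, namely $T_{\1_x}(\Gc(x))$, is split in $T_{\1_x}\Gc$; hence $\Ac\Gc$ is split. For the converse, assume $\Ac\Gc$ is split and fix $x,y\in M$; I must show that $\Gc(x,y)$ is a submanifold of $\Gc$, and there is nothing to prove unless $y\in\Gc.x$, so fix $g\in\Gc(x,y)$. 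By Theorem~\ref{5.4}\eqref{smooth4_item1} and its proof, $\Gc(x,y)$ is a closed immersed submanifold of $\Gc$ and the left translation $L_g\colon\Gc(-,x)\to\Gc(-,y)$ restricts to a diffeomorphism of $\Gc(x)=\Gc(x,x)$ onto the fibre $\Gc(x,y)$ of ${\bf t}_x$ carrying $\1_x$ to $g$; moreover ${\bf s}\circ L_g={\bf s}\vert_{\Gc(-,x)}$. Consequently $T_{\1_x}L_g$ is a topological linear isomorphism $\ker T_{\1_x}{\bf t}\to\ker T_g{\bf t}$ which carries $T_{\1_x}(\Gc(x))=\ker T_{\1_x}{\bf s}\cap\ker T_{\1_x}{\bf t}$ onto $T_g(\Gc(x,y))=\ker T_g{\bf s}\cap\ker T_g{\bf t}$. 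By hypothesis and the reformulation of the previous paragraph, $\ker T_{\1_x}{\bf s}\cap\ker T_{\1_x}{\bf t}$ is split in $\ker T_{\1_x}{\bf t}$; transporting through the isomorphism $T_{\1_x}L_g$ shows $T_g(\Gc(x,y))=\ker T_g{\bf s}\cap\ker T_g{\bf t}$ is split in $\ker T_g{\bf t}$, and since $\ker T_g{\bf t}$ is split in $T_g\Gc$ (again because ${\bf t}$ is a submersion) it is split in $T_g\Gc$. Thus the inclusion $\Gc(x,y)\hookrightarrow\Gc$ is a split immersion with closed image, i.e.\ $\Gc(x,y)$ is a submanifold of $\Gc$, and therefore $\Gc$ is split.

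The only genuinely non-routine ingredient is this final propagation step. Theorem~\ref{5.4}\eqref{smooth4_item1} yields only a \emph{closed immersed} copy of $\Gc(x,y)$; promoting it to a submanifold forces one to transport the complementation of $\ker T{\bf s}\cap\ker T{\bf t}$ from the unit $\1_x$ to an arbitrary point $g$ by the translation $L_g$, while keeping careful track of whether the complement is taken inside $\ker T{\bf s}$, inside $\ker T{\bf t}$, or inside $T\Gc$ --- the passage between these three viewpoints being precisely what the two elementary facts on split subspaces, together with the submersivity of ${\bf s}$ and ${\bf t}$, supply.
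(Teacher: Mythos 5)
Your proof is correct and takes essentially the same route as the paper: the factorization $\rho_y=T_{\1_y}{\bf t}\circ({\bf 1}_*)_y$ with $({\bf 1}_*)_y$ an isomorphism onto $\ker T_{\1_y}{\bf s}$ gives the first two claims, and the last equivalence rests on Theorem~\ref{5.4}\eqref{smooth4_item1} together with the submersion property of ${\bf s}$ and ${\bf t}$. The paper compresses that final equivalence into a single sentence; your three-way equivalence of splitness of $\ker T_{\1_x}{\bf s}\cap\ker T_{\1_x}{\bf t}$ inside $\ker T_{\1_x}{\bf s}$, $\ker T_{\1_x}{\bf t}$ and $T_{\1_x}\Gc$, and the transport of the complement from $\1_x$ to an arbitrary $g\in\Gc(x,y)$ by $L_g$, are precisely the details it leaves implicit.
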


\begin{proof} 
Recall that  by construction $\rho$ is the composition of $T{\bf t}$  restricted to  $\ker T_{{\bf 1}_y}{\bf s}$ with ${\bf 1}_*$. Since ${\bf 1}_*$ is an isomorphism in restriction to $\mathcal{AG}_y$ we obtain directly the first part. 
The second part is a direct consequence of Definition \ref{split} and the previous argument. The last assertion is  then a direct consequence of Theorem  \ref{5.4}\eqref{smooth4_item1} since ${\bf s}$ is a submersion.
\end{proof}

%%%%%%%%%%%%%%%%%%%%%%%%%%%%%%%%%%%%%%%%%%%%%%%%%
\subsection{Link between Banach-Lie  algebroids  and  n.n.H. Banach-Lie groupoid orbits}\label{AL-G-orbits}
  %%%%%%%%%%%%%%%%%%%%%%%%%%%%%%%%%%%%%%%%
  % Assume that  the  Banach-Lie  algebroid $\mathcal{AG}$ of  $\mathcal{G}$ is split on a connected component ${N}$ of $M$.  Thus from Theorem \ref{orbitalgebroid},  each $\mathcal{AG}$-orbit is an  immersed Banach manifold in ${N}$% which ends the proof of Point (4).

 %To end this section we  give a link between the orbits of the Banach-Lie algebroid of $\mathcal{G}\tto M$ and the orbits of this groupoid. \\
 
 %We have the following link:

 \begin{theorem}\label{linkorbitG}
If $\Gc\tto M$ is a split n.n.H. Banach-Lie groupoid,   
 %and $\mathcal{AG}$ its Banach-Lie algebroid. 
then for any $x\in M$  
its orbit  $\Gc.x$ is a weakly immersed submanifold of $M$ whose connected components are orbits  of the  Banach-Lie algebroid $\mathcal{AG}$.
 \end{theorem}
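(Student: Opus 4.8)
The plan is to combine Theorem~\ref{5.4}\eqref{smooth4_item2} (which describes the groupoid orbit $\Gc.x$ as a weakly immersed submanifold of $M$, under the splitness hypothesis guaranteeing that $\Gc(x,y)$ is a split submanifold of $\Gc$ for all $y\in\Gc.x$) with Theorem~\ref{orbitalgebroid} and Theorem~\ref{linkorbit}-type reasoning relating $\mathcal A$-admissible curves to the groupoid. First I would observe that since $\Gc\tto M$ is split, Proposition~\ref{kerrho} tells us that $\mathcal{AG}$ is a split Banach-Lie algebroid; moreover, by Theorem~\ref{5.4}\eqref{smooth4_item1}, for each $y\in M$ the subspace $\ker T_{\1_y}{\bf s}\cap\ker T_{\1_y}{\bf t}$ is precisely $T_{\1_y}(\Gc(y))$, which under splitness is a closed split subspace of $\ker T_{\1_y}{\bf s}\cong(\mathcal{AG})_y$, and consequently the image distribution $\rho(\mathcal{AG})\subseteq TM$ is a closed subbundle along each orbit. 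This puts us exactly in the setting where Theorem~\ref{orbitalgebroid} applies: each $\mathcal{AG}$-orbit is an immersed submanifold of $M$.

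The core of the argument is then to identify the connected components of the groupoid orbit $\Gc.x$ with $\mathcal{AG}$-orbits. For the inclusion "$\mathcal{AG}$-orbit of $x$ $\subseteq$ $\Gc.x$", I would take $y$ in the $\mathcal{AG}$-orbit of $x$, use Lemma~\ref{smooth-lift} to get a smooth $\mathcal{AG}$-admissible curve $\gamma\colon[a,b]\to M$ with a smooth $\mathcal{AG}$-lift $\alpha$ joining $x$ to $y$, and then lift $\gamma$ through the principal bundle ${\bf t}_x\colon\Gc(x,-)\to\Gc.x$ of Theorem~\ref{5.4}\eqref{smooth4_item2}: concretely, solve the time-dependent differential equation on the ${\bf s}$-fiber $\Gc(x,-)$ whose right-hand side at $g$ is $TL_g(\alpha(t))\in T_g(\Gc(x,-))$, starting from $\1_x$; its solution $g(t)$ satisfies ${\bf s}(g(t))=x$ and $\tfrac{d}{dt}{\bf t}(g(t))=\rho(\alpha(t))=\dot\gamma(t)$, so ${\bf t}(g(t))=\gamma(t)$ and in particular ${\bf t}(g(b))=y\in\Gc.x$. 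For the reverse inclusion, given $y=\Gc.x$ realized by some $g\in\Gc(x,y)$, I would use that $\Gc(x,-)$ is a connected component of an ${\bf s}$-fiber (after replacing $\Gc$ by its appropriate pure component) hence path-connected by a piecewise smooth path from $\1_x$ to $g$, and push that path down by ${\bf t}_x$ and differentiate to exhibit $y$ as joined to $x$ by an $\mathcal{AG}$-admissible curve; alternatively one notes that $\Gc.x$, being the image of the connected manifold $\Gc(x,-)$ under the continuous surjection ${\bf t}_x$, is itself connected, so the two orbit notions through $x$ agree, and one runs the same argument through an arbitrary point of $\Gc.x$. Finally, the statement that $\Gc.x$ is a weakly immersed submanifold of $M$ is exactly the first conclusion of Theorem~\ref{5.4}\eqref{smooth4_item2}, and since its connected components coincide with $\mathcal{AG}$-orbits, these carry the immersed submanifold structure from Theorem~\ref{orbitalgebroid}, compatibly with the one from Theorem~\ref{5.4}.

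\textbf{Main obstacle.} The delicate point is the solvability of the ODE $\dot g(t)=TL_{g(t)}(\alpha(t))$ on the ${\bf s}$-fiber $\Gc(x,-)$, i.e.\ the "integration" of the $\mathcal{AG}$-path $\alpha$ to a path in the groupoid: in the Banach setting one must check that this time-dependent right-invariant vector field is genuinely smooth (it is, since the multiplication ${\bf m}$ is smooth and $\alpha$ is smooth) and that its flow exists on all of $[a,b]$ rather than blowing up; completeness on the full interval follows from the right-invariance, which lets one extend local solutions by translation just as one proves that left-invariant vector fields on a Banach-Lie group are complete, but this requires a careful patching argument along a partition of $[a,b]$ together with the groupoid associativity relations (G2)--(G4). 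A secondary technical nuisance is keeping track of the pure-component decompositions from Theorem~\ref{5.4}\eqref{smooth4_item0} so that all the submanifold and principal-bundle statements are applied on spaces modelled on a fixed Banach space; I would dispose of this at the outset by restricting attention to the pure component of $\Gc$ containing $\Gc(x,-)$.
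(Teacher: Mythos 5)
Your two inclusions follow essentially the paper's own route: one direction is obtained by differentiating a path in an ${\bf s}$-fiber and right-translating back to the identity section, and the other by integrating the $\mathcal A$-lift via time-dependent right-invariant vector fields, with a compactness/patching argument over trivializing neighborhoods (this is exactly the Crainic--Fernandes scheme the paper adapts in Lemma~\ref{linkorbit}). One slip in your ODE: with the paper's conventions the fibers of $\mathcal{AG}$ sit in $\ker T{\bf s}$ and left translations $L_g$ act on ${\bf t}$-fibers, not on ${\bf s}$-fibers, so the correct right-hand side is $T_{\1_{\gamma(t)}}R_{g(t)}(\alpha(t))$, i.e.\ the value at $g(t)$ of the right-invariant field extending $\alpha(t)$, not $TL_{g(t)}(\alpha(t))$; as written your vector does not lie in $T_{g(t)}\Gc(x,-)$.

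The genuine gap is in the identification of the connected components of $\Gc.x$ with the $\mathcal{AG}$-orbits. You argue that $\Gc(x,-)$ is connected (``the image of the connected manifold $\Gc(x,-)$''), but an ${\bf s}$-fiber of a Banach-Lie groupoid need not be connected, and passing to a pure component of $\Gc$ does not repair this; consequently $\Gc.x$ may be disconnected and the $\mathcal{AG}$-orbit of $x$ is only $\Gc^0.x$, where $\Gc^0$ is the open wide subgroupoid of ${\bf s}$-identity components. Your two inclusions (correctly carried out) identify the $\mathcal{AG}$-orbit of $z$ with $\Gc^0.z$ for each $z\in\Gc.x$, but to conclude that these sets are precisely the connected components of $\Gc.x$ one still has to show that each $\Gc^0.z$ is \emph{open} in $\Gc.x$; this is where the paper uses that ${\bf t}_x\colon\Gc(x,-)\to\Gc.x$ is an open map (coming from the principal-bundle structure of Theorem~\ref{5.4}) together with the openness of $\Gc^0$ in $\Gc$ (Lemma~\ref{linkorbit1}). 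Your proposal never addresses this openness, and ``running the same argument through an arbitrary point'' does not substitute for it.

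A secondary problem is your appeal to Theorem~\ref{orbitalgebroid}: splitness of the groupoid gives, via Proposition~\ref{kerrho}, that $\ker\rho$ is split in each fiber of $\mathcal{AG}$, but it does not give that the image distribution $\rho(\mathcal{AG})\subseteq TM$ is closed; the image of $\rho_x$ is the tangent space to the orbit, which in general is only a (not necessarily closed) subspace of $T_xM$ -- this is precisely why the theorem claims only a \emph{weakly} immersed submanifold structure, obtained directly from Theorem~\ref{5.4}\eqref{smooth4_item2}. So that part of your argument is both unjustified and unnecessary; the weakly immersed structure should be taken from Theorem~\ref{5.4} alone, as you in fact also note.
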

 
  \begin{remark}\label{splitG}
	\normalfont
\begin{enumerate}
 \item%[(1)]  
According to Theorem \ref{5.4}, 
the previous Theorem~\ref{linkorbitG} implies that the orbits of an integrable split Banach-Lie algebroid (cf. section \ref{tildeG}) 
are always 
%only 
weakly immersed Banach submanifold of $M$. 
On the other hand for a general Banach-Lie algebroid $(\mathcal{A},M,\rho, [\cdot,\cdot])$ if this algebroid is split  and the associated distribution $\rho(\mathcal{A})$ is closed,   then Theorem \ref{orbitalgebroid} 
%gives an analog result.  
shows that its orbits are immersed submanifolds. 
In finite dimensions,  any orbit of a Lie algebroid is a 
split immersed submanifold of $M$. 
In the general context of Banach-Lie algebroids 
we think that the orbits may not be weakly immersed submanifolds but unfortunately, we have no specific example in this connection.
\item%[(2)]  
The Banach-Lie algebroid of any split n.n.H. Banach-Lie groupoid is split. 
Indeed  for any $x\in M$, we have  $\ker\rho_x=T_x(\Gc(x))$, which is the Lie algebra of the isotropy group $\Gc(x)$.
As in the proof of Theorem~\ref{5.4}\eqref{smooth4_item2}, for  the sequence 
$\Gc(x)\subseteq\Gc(x,-)\subseteq\Gc$  we know that $\Gc(x)\subseteq\Gc$ and $\Gc(x,-)\subseteq\Gc$ are submanifolds, 
and so by Lemma~\ref{smooth3} also $\Gc(x)\subseteq\Gc(x,-)$ is a submanifold. 
Therefore  
the inclusion of tangent spaces $\Ker\rho_x\subseteq (\Ac\Gc)_x$ is split 
since $(\Ac\Gc)_x=T_x(\Gc(x,-))$ and ${\bf t}_x:\Gc(x,-)\ap \Gc.x$ is a submersion. 
\end{enumerate}
%We also  note that  by Proposition \ref{invarianttarget}\eqref{invarianttarget_item2} and the construction of the Banach-Lie  groupoid $\mathcal{G}$, the algebroid $\mathcal{AG}$ is split o if and only if the following condition is satisfied: 
%\centerline{ $\ker T_g{\bf s}\cap \ker T_g{\bf t} $ is complemented in $\ker T_g{\bf s}$ for all $x\in {N}$ and $g\in \mathcal{G}(x,-)$.}
 \end{remark}

The proof of Theorem \ref{linkorbitG}  is based on Lemma~\ref{linkorbit} below, which in turn needs some preparations.

For any connected component ${N}$ of $M$  
we denote 
%by $\mathcal{G}_{{N}}$  the set 
$$\Gc_N:=\{g\in \mathcal{G}\mid {\bf s}(g) \in {N},  {\bf t}(g) \in {N}\}$$
and let $\mathcal{G}_{{N}}^0$ 
be the connected component of $\mathcal{G}_{{N}}$ that contains ${\bf 1}({N})$. 
Just as for finite-dimensional Lie groupoids 
(see for instance \cite[Prop. 1.30]{CraFe}), 
it is easy to show that  $\mathcal{G}_{{N}}\tto {N}$ (resp. $\mathcal{G}_{{N}}^0\tto {N}$) 
provided with the restrictions  ${\bf s}_{{N}}$, ${\bf t}_{{N}}$ and ${\bf i}_{{N}}$ 
(resp. ${\bf s}_{{N}}^0$, ${\bf t}_{{N}}^0$ and ${\bf i}_{{N}}^0$)
  of ${\bf s}$, ${\bf t}$ and $\iota$ to  $\mathcal{G}_{{N}}$ (resp. $\mathcal{G}_{{N}}^0$), 
	and the restriction  ${\bf m}_{{N}}$ (resp. ${\bf m}_{{N}}^0$) of  ${\bf m}$ to $\mathcal{G}_{{N}}^{(2)}$ (resp. $(\mathcal{G}_{{N}}^0)^{(2)}$) is a n.n.H.  Banach-Lie groupoid  (resp.  ${\bf s}$-connected n.n.H. Banach-Lie groupoid). 
Let $\mathcal{AG}_{{N}}\ap {N}$ be  the restriction of $\mathcal{AG}$ to ${N}$. 
	Then the restriction $\rho_{{N}}$ of $\rho$ to $\mathcal{AG}_{{N}}$ take values in $T{N}$ and the restriction of the Lie bracket $[\cdot,\cdot]$ on $\mathcal{AG}$  to $\mathcal{AG}_{{N}}$ is a Lie bracket on the anchored bundle $(\mathcal{AG}_{{N}},{N},\rho_{{N}})$ is a Lie bracket again denoted by $[\cdot,\cdot]$. 
	Finally  $(\mathcal{AG}_{{N}},{N},\rho_{{N}},[\cdot,\cdot])$  (denoted as $\mathcal{AG}_{{N}}$ for short) is the algebroid associated to $\mathcal{G}_{{N}}\tto  {N}$.
Note that $\mathcal{AG}_{{N}}$ is also the Banach-Lie algebroid of $\mathcal{G}_{{N}}^0\tto  {N}$ 
(cf. Remark~\ref{summary}).

We define $\Gc^0$ as the set of all $g\in\Gc$ that belong to the 
connected component of $\1_{{\bf s}(g)}$ in $\Gc({\bf s}(g),-)$, 
and then $\Gc^0$ is an ${\bf s}$-connected open wide subgroupoid of $\Gc$ just as in \cite[Prop. 1.30]{CraFe}.

\begin{lemma}\label{linkorbit} 
Let ${N}$ be a connected component of $M$. For any $x\in N$,
its  $\mathcal{G}_{{N}}^0$-orbit 
%of $x$, i.e., 
$\mathcal{G}_{{N}}^0.x$ 
coincides with the $\mathcal{AG}_{{N}}$-orbit of $x$ 
and is a weakly immersed submanifold of~$N$.  
  \end{lemma}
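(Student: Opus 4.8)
The goal is to identify the groupoid orbit $\mathcal{G}_{N}^0.x$ with the algebroid orbit of $x$ for the Banach-Lie algebroid $\mathcal{AG}_{N}$, and to equip it with a weakly immersed submanifold structure. The natural strategy is a two-sided inclusion at the level of orbits, combined with transporting the principal bundle structure from Theorem~\ref{5.4}\eqref{smooth4_item2}. First I would observe that since $\mathcal{G}\tto M$ is split, the restricted groupoid $\mathcal{G}_{N}^0\tto N$ is split as well, so Theorem~\ref{5.4}\eqref{smooth4_item2} applies: the orbit $\mathcal{G}_{N}^0.x$ carries a pure Banach manifold structure for which its inclusion into $N$ (hence into $M$) is a weak immersion, and ${\bf t}_x\colon\mathcal{G}_{N}^0(x,-)\to\mathcal{G}_{N}^0.x$ is a Banach principal $\mathcal{G}_{N}^0(x)$-bundle. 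The remaining task is purely to match this set with the $\mathcal{AG}_{N}$-orbit.

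\textbf{From groupoid orbit to algebroid orbit.} For the inclusion $\mathcal{G}_{N}^0.x\subseteq(\text{$\mathcal{AG}_{N}$-orbit of }x)$, I would take $y\in\mathcal{G}_{N}^0.x$, pick $g\in\mathcal{G}_{N}^0(x,-)$ with ${\bf t}(g)=y$, and use that the ${\bf s}$-fiber $\mathcal{G}_{N}^0(x,-)$ is connected (since $\mathcal{G}_{N}^0$ is ${\bf s}$-connected) and is a n.n.H.\ Banach manifold by Proposition~\ref{1}. Choosing a piecewise smooth path $\sigma\colon[a,b]\to\mathcal{G}_{N}^0(x,-)$ from $\1_x$ to $g$ --- such a path exists because on a Banach manifold connected components are path-connected --- the projection $c:={\bf t}\circ\sigma$ is a piecewise smooth curve in $N$ from $x$ to $y$. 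The claim is that $c$ is $\mathcal{AG}_{N}$-admissible: using the right-translation trick from the proof of Theorem~\ref{5.4}\eqref{smooth4_item1}, one transports $\dot\sigma(s)\in T_{\sigma(s)}\mathcal{G}(x,-)$ back to $(\mathcal{AG})_{c(s)}$ via the inverse of a right translation $R_{\sigma(s)}$, producing a piecewise smooth $\mathcal{AG}_{N}$-lift $\alpha$ of $c$. The verification $\rho(\alpha(s))=\dot c(s)$ is exactly the computation ${\bf t}\circ R_g={\bf t}$ (Proposition~\ref{invarianttarget_right}\eqref{invarianttarget_right_item2}), differentiated along the path. Hence $y$ lies in the $\mathcal{AG}_{N}$-orbit of $x$.

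\textbf{From algebroid orbit to groupoid orbit.} For the reverse inclusion I would use Lemma~\ref{smooth-lift} to reduce to a \emph{smooth} $\mathcal{AG}_{N}$-admissible curve $c\colon[a,b]\to N$ with a smooth $\mathcal{AG}_{N}$-lift $\alpha$, $c(a)=x$, $c(b)=y$. Then $\widetilde\alpha(t):=\widetilde{\alpha(t)}$ determines, via the isomorphism~\eqref{isom_right}, a time-dependent right-invariant vector field along ${\bf t}^{-1}$-fibers; I would integrate the corresponding time-dependent ODE on $\mathcal{G}_{N}^0(x,-)$ starting at $\1_x$. Concretely, one seeks $\sigma\colon[a,b]\to\mathcal{G}_{N}^0(x,-)$ with $\dot\sigma(t)=(T_{\1_{c(t)}}R_{\sigma(t)})(\alpha(t))$ and $\sigma(a)=\1_x$; standard Banach-manifold ODE theory (Picard--Lindelöf on charts, plus the fact that the flow stays in the ${\bf t}$-varying family of ${\bf s}$-fibers because the vector field is tangent to them) gives a solution on the whole interval after possibly subdividing, and ${\bf t}(\sigma(t))=c(t)$ by uniqueness since both sides solve the same ODE downstairs with the same initial value. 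Thus $g:=\sigma(b)\in\mathcal{G}_{N}^0(x,-)$ has ${\bf t}(g)=y$, so $y\in\mathcal{G}_{N}^0.x$. This establishes the set-theoretic equality, and since the $\mathcal{AG}_{N}$-orbit is by definition connected (it is the image of an interval under the admissibility relation, hence path-connected in the orbit), it matches a single connected component of $\mathcal{G}_{N}^0.x$; but $\mathcal{G}_{N}^0$ is ${\bf s}$-connected so $\mathcal{G}_{N}^0.x$ is itself connected, and the equality is exact.

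\textbf{Main obstacle.} The delicate point is the global integrability of the time-dependent right-invariant ODE on the n.n.H.\ Banach manifold $\mathcal{G}_{N}^0(x,-)$: one must ensure the solution does not escape in finite time and that it genuinely stays inside the leaf structure, i.e.\ that ${\bf t}\circ\sigma=c$ throughout. The escape issue is handled because the curve $c$ is defined on a compact interval and the lift $\alpha$ is continuous hence bounded, so the vector field has the requisite local boundedness for the solution to be continued up to $t=b$ (subdividing $[a,b]$ so that each subinterval maps into a single groupoid chart); the confinement to the leaf follows from Proposition~\ref{invarianttarget_right}\eqref{invarianttarget_right_item2} together with uniqueness of solutions downstairs, exactly as ${\bf t}_x\circ R_g={\bf t}_y$ was used in Theorem~\ref{5.4}. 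Everything else --- the weak immersion and principal bundle claims --- is a direct citation of Theorem~\ref{5.4}\eqref{smooth4_item2} applied to $\mathcal{G}_{N}^0\tto N$.
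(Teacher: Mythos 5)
Your proposal is correct and follows essentially the same route as the paper's proof: the inclusion of the $\Gc_N^0$-orbit in the $\Ac\Gc_N$-orbit by projecting a path in an ${\bf s}$-fiber through ${\bf t}$ and lifting it via right translations, the reverse inclusion via Lemma~\ref{smooth-lift} and integration of the time-dependent right-invariant vector field determined by the lift together with a compactness/subdivision argument, and the weakly immersed structure by invoking Theorem~\ref{5.4}\eqref{smooth4_item2} for $\Gc_N^0\tto N$. The only (cosmetic) difference is that the paper first extends $\alpha(t)$ to a time-dependent local section $\alpha_t$ of $\Ac\Gc_N$ over trivializing open sets, flows the associated right-invariant fields there and concatenates the resulting groupoid elements over a finite cover of $c([0,1])$, whereas you integrate directly on $\Gc(x,-)$; note that your right-hand side $(T_{\1_{c(t)}}R_{\sigma(t)})(\alpha(t))$ only becomes a genuine (time-dependent) vector field after such a local extension, which your chartwise subdivision implicitly provides.
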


We postpone the proof of Lemma~\ref{linkorbit} until after the proof of Theorem~\ref{linkorbitG}. 

\begin{lemma}\label{linkorbit1} 
Let $\Oc\subseteq M $ be an orbit of $\Gc$, 
and assume that $\Oc$ is endowed with a topology such that for every $x\in\Oc$ the map 
${\bf t}\vert_{\Gc(x,-)}\colon\Gc(x,-)\to\Oc$ is continuous and open. 
Then $\{\Gc^0.x\mid x\in\Oc\}$ is the family of all connected components of~$\Oc$. 
\end{lemma}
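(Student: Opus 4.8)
The plan is to reduce the statement to Lemma~\ref{linkorbit} by working one connected component of $M$ at a time. First I would fix an orbit $\Oc\subseteq M$ of $\Gc$ together with the given topology on $\Oc$, and pick $x\in\Oc$. Since $\Oc$ is an orbit, for any $y\in\Oc$ there is $g\in\Gc$ with ${\bf s}(g)=x$, ${\bf t}(g)=y$, so $x$ and $y$ lie in the same connected component $N$ of $M$; hence $\Oc\subseteq N$ and in fact $\Oc$ is also an orbit of the restricted groupoid $\Gc_N\tto N$. Moreover the isotropy and source-fibers are unchanged: $\Gc(x,-)=\Gc_N(x,-)$ for $x\in N$, because ${\bf s}(g)=x\in N$ already forces ${\bf t}(g)\in\Oc\subseteq N$. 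Thus $\Gc^0.x=\Gc_N^0.x$ for every $x\in N$, where $\Gc_N^0$ is the ${\bf s}$-connected component subgroupoid introduced before Lemma~\ref{linkorbit}. So it suffices to prove the statement for $\Gc_N^0$ in place of $\Gc$.

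Next I would establish two set-theoretic facts. (1) The sets $\{\Gc^0.x\mid x\in\Oc\}$ partition $\Oc$: since $\Gc^0$ is a subgroupoid, ``lying in the same $\Gc^0$-orbit'' is an equivalence relation on $\Oc$ (reflexivity from $\1_x\in\Gc^0$, symmetry from ${\bf i}(\Gc^0)=\Gc^0$, transitivity from closure of $\Gc^0$ under multiplication, which all hold because $\Gc^0$ is a wide subgroupoid of $\Gc$ as noted before Lemma~\ref{linkorbit}). (2) Each $\Gc^0.x$ is open in $\Oc$: here is where the continuity/openness hypothesis on ${\bf t}\vert_{\Gc(x,-)}\colon\Gc(x,-)\to\Oc$ enters. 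Since $\Gc^0\cap\Gc(x,-)$ is by definition the connected component of $\1_x$ in $\Gc(x,-)$, and $\Gc(x,-)$ is a n.n.H.\ Banach manifold (Proposition~\ref{1}), this component is open in $\Gc(x,-)$; its image under the open map ${\bf t}\vert_{\Gc(x,-)}$ is precisely $\Gc^0.x$, hence open in $\Oc$.

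Now I would combine (1) and (2): a partition of a topological space into open subsets is automatically a partition into clopen subsets (each block is the complement of the union of the others), so the blocks $\Gc^0.x$ are exactly the unions of connected components of $\Oc$ that they contain. It remains to see that each $\Gc^0.x$ is itself connected, so that it is a single connected component. For this I invoke Lemma~\ref{linkorbit} applied to the component $N\supseteq\Oc$: it identifies $\Gc_N^0.x$ with the $\mathcal{AG}_N$-orbit of $x$, which is connected because it is swept out by the images of continuous (indeed piecewise smooth) $\mathcal{AG}_N$-admissible curves emanating from $x$, so it is a union of path-connected (hence connected) sets all containing $x$. Since $\Gc^0.x=\Gc_N^0.x$, this finishes the proof that $\{\Gc^0.x\mid x\in\Oc\}$ is exactly the family of connected components of $\Oc$.

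The main obstacle I anticipate is the bookkeeping needed to justify that the ${\bf s}$-fiber $\Gc(x,-)$ and its distinguished component are ``component-independent'', i.e.\ that restricting $\Gc$ to the component $N$ changes neither the source fibers over points of $N$ nor the orbit $\Oc$; once this is pinned down cleanly, everything else is either the openness hypothesis being used verbatim or a direct appeal to Lemma~\ref{linkorbit}. A secondary point to handle carefully is that $\Oc$ carries only the a~priori topology supplied in the hypothesis (not necessarily its manifold topology from Lemma~\ref{linkorbit}), so the connectedness of $\Gc^0.x$ must be argued via curves in $M$ pushed along the continuous map ${\bf t}\vert_{\Gc(x,-)}$, rather than by invoking any manifold structure on $\Oc$ directly.
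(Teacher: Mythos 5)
Your reduction to $\Gc_N$, your partition argument ($\Gc^0$ is a wide subgroupoid, so lying in the same $\Gc^0$-orbit is an equivalence relation on $\Oc$), and your openness argument ($\Gc^0\cap\Gc(x,-)$ is the connected component of $\1_x$ in the manifold $\Gc(x,-)$, hence open, and its image under the open map ${\bf t}\vert_{\Gc(x,-)}$ is $\Gc^0.x$) all agree in substance with the paper's proof. The gap is in the connectedness step. You identify $\Gc^0.x=\Gc_N^0.x$ with the $\mathcal{AG}_N$-orbit of $x$ via Lemma~\ref{linkorbit} and argue that this orbit is connected because it is swept out by images of $\mathcal{AG}_N$-admissible curves; but those curves are continuous only as maps into $M$ (equivalently $N$), whereas the lemma requires connectedness of $\Gc^0.x$ in the \emph{a priori} topology of $\Oc$, which the hypothesis does not relate to the subspace topology of $M$. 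In the intended application (Theorem~\ref{linkorbitG}) that topology is the quotient manifold topology of a weakly immersed orbit, possibly strictly finer than the subspace topology, so connectedness of the curve images in $M$ does not transfer to $\Oc$. Your closing remark acknowledges exactly this difficulty but does not resolve it, and as long as it is unresolved the main line of your connectedness argument does not go through.

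The repair is the paper's direct argument, which also makes the detour through Lemma~\ref{linkorbit} (and hence through the split hypothesis and the algebroid machinery) unnecessary: by construction $\Gc^0$ is ${\bf s}$-connected, so $\Gc^0(x,-)=\Gc^0\cap\Gc(x,-)$ is a connected subset of $\Gc(x,-)$, and since ${\bf t}\vert_{\Gc(x,-)}\colon\Gc(x,-)\to\Oc$ is continuous \emph{into $\Oc$} by hypothesis, its image $\Gc^0.x$ is connected in $\Oc$. Combined with your disjointness and openness steps, one obtains a family of open, connected, mutually disjoint subsets covering $\Oc$, which is precisely the family of connected components. Note also that this keeps Lemma~\ref{linkorbit1} a purely topological statement valid for an arbitrary n.n.H.\ Banach-Lie groupoid, whereas routing it through Lemma~\ref{linkorbit} would silently import the standing assumptions of Section~\ref{AL-G-orbits}.
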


\begin{proof}
It suffices to prove that $\{\Gc^0.x\mid x\in\Oc\}$ is a family of open, connected, mutually disjoint subsets of $\Oc$  whose union is equal to~$\Oc$. 

To this end we first note that for arbitrary $x\in\Oc$ one has $x\in \Gc^0.x$, 
hence $\bigcup\limits_{x\in\Oc}\Gc^0.x=\Oc$. 
We now check that if $x,y\in\Oc$ and $\Gc^0.x\cap\Gc^0.y\ne\emptyset$ then 
$\Gc^0.x=\Gc^0.y$. 
In fact, if $z\in\Gc^0.x\cap\Gc^0.y$, 
then there exist $g\in\Gc^0(x,z)$ and $h\in\Gc^0(y,z)$.  
For every $w\in\Gc^0.x$ there exists $k\in\Gc^0(x,w)$, 
and then $kg^{-1}h\in\Gc^0(y,w)$, hence $w\in\Gc^0.x$.  
This shows that $\Gc^0.x\subseteq\Gc^0.y$, and the converse inclusion can be proved similarly. 

We have recalled above that $\Gc^0$ is an ${\bf s}$-connected groupoid. 
Since the image of a connected set by a continuous map is connected  and one has 
$\Gc^0.x={\bf t}^0(\Gc^0(x,-))$, 
it then follows that $\Gc^0.x$ is a connected subset of $\Oc$. 
Moreover, as also recalled above, $\Gc^0$ is an open subset of $\Gc$, 
and this implies that $\Gc^0(x,-)=\Gc^0\cap\Gc(x,-)$ is an open subset of $\Gc(x,-)$. 
The map  
${\bf t}\vert_{\Gc(x,-)}\colon\Gc(x,-)\to\Oc$ is open by hypothesis, 
hence the equality $\Gc^0.x={\bf t}^0(\Gc^0(x,-))$ implies that $\Gc^0.x$ is an open subset of $\Oc$, and this completes the proof. 
\end{proof}

 \begin{proof}[Proof of Theorem \ref{linkorbitG}] 
 	 By Theorem \ref{5.4}\eqref{smooth4_item2}, $\mathcal{G}(x,-)$ is a pure Hausdorff  Banach manifold 
 	and the map ${\bf t}_x: \mathcal{G}(x,-)\ap \mathcal{G}.x$ is a locally trivial fibration, hence is in particular continuous and open.  
 The assertion then follows at once by Lemmas~\ref{linkorbit} and \ref{linkorbit1}. 
 \end{proof}

  \begin{proof}[Proof of Lemma \ref{linkorbit}] 
  
% \begin{assertion*} 
{\bf Step 1}: 
The  $\mathcal{G}_{{N}}^0$-orbit of $x$ is contained in its $\mathcal{AG}_{{N}}$-orbit.
% \end{assertion*}

   %A point    $y\in{N}$ which  belongs to the 
   For every $y\in \mathcal{G}_{{N}}^0.x$ 
   %if and only if 
   there exists $g\in \mathcal{G}_{{N}}^0$ with ${\bf s}(g)=x$ and  ${\bf t}(g)=y$. 
	Since the groupoid $\mathcal{G}_{{N}}^0\tto  {N}$ is ${\bf s}$-connected, there exists a  smooth curve $\gamma:[0,1]\ap \mathcal{G}_{{N}}^0(y,-)$ with $\gamma(0)={\bf 1}_x$ and $\gamma(1)=g$. 
	For  $c:={\bf t}\circ \gamma\colon [0,1]\ap {N}$ 
	%i.e., the ${\bf t}$-projection of $\gamma$ on ${N}$, 
	one has 
	$c(0)=x$ and $c(1)=y$. 
	Hence in order to prove that $y$ belongs to the $\mathcal{AG}_{{N}}$-orbit of $x$ it suffices to show that $c$ is an $\Ac\Gc_N^0$-admissible curve. 
		To this end 
	we set  
	$$\alpha(t)=T R_{\gamma(t)^{-1}}(\dot{\gamma}(t))
	\in T_{\1_{{\bf t}(\gamma(t))}}(\Gc_N^0({\bf t}(\gamma(t)),-)) \text{ for every }t\in[0,1]$$ 
	and we will check that $\alpha$ is an $\Ac\Gc_N^0$-lift of~$c$.  
	
	We recall from Remark~\ref{summary} that the fiber of $\Ac\Gc_N^0$ at any 
	$z\in N$ is $T_{\1_{z}}(\Gc_N^0(z,-))$. 
	Hence, if we denote by $\pi\colon \Ac\Gc_N^0\to N$ the vector bundle projection, then one has 
	$\pi(\alpha(t))={\bf t}(\gamma(t))=c(t)$. 
	If $\rho\colon \Ac\Gc_N^0\to TN$ is the anchor, then we also have $\rho(\alpha(t))=\dot c(t)$ for the following reason. 
	Since ${\bf t}(\gamma(t))=c(t)$ and ${\bf s}(\gamma(t))=y$, 
	one has ${\bf t}_y\circ R_{\gamma(t)}={\bf t}_{c(t)}$ 
	(cf. the equality ${\bf t}_x\circ R_g={\bf t}_y$ in the proof of  Proposition~\ref{invarianttarget_right}\eqref{invarianttarget_right_item2}). 
	Therefore ${\bf t}_y={\bf t}_{c(t)}\circ R_{\gamma(t)^{-1}}\colon \Gc_N^0(y,-)\to M$ 
	with $R_{\gamma(t)^{-1}}(\gamma(t))=\1_{{\bf t}(\gamma(t))}=\1_{\bf c(t)}$. 
	Computing the differential of both sides of the above equality at the point $\gamma(t)\in \Gc_N^0(y,-)$ and using the definition of $\rho$ (cf. Remark~\ref{summary}) we obtain 
	$$T({\bf t}_y)=
	%T({\bf t}_{c(t)})\vert_{T_{\1_{c(t)}}}
	\rho\vert_{(\Ac\Gc_N^0)_{c(t)}}\circ T_{\gamma(t)}(R_{\gamma(t)^{-1}}) \colon T_{\gamma(t)}(\Gc_N^0(y,-))\to T_{c(t)}M.$$
	Evaluating this equality at $\dot\gamma(t)\in  T_{\gamma(t)}(\Gc_N^0(y,-))$ we obtain ${\dot c}(t)=\rho(\alpha(t))$, and this completes the proof of the fact that $\alpha$ is  an $\Ac\Gc_N^0$-lift of~$c$. 
	
%	By  Proposition \ref{invarianttarget}\eqref{invarianttarget_item2} and its proof 
%	 we have $T{\bf t}(\alpha(t))=T{\bf t}(\dot{\gamma}(t))$. 
	 %and  ${\bf t}(\alpha)={\bf t}(\gamma)$. 
%	
%	It follows that  $\dot{c}=T{\bf t}(\dot{\gamma})=T{\bf t}(\alpha)$  and then the projection of $\alpha$ on ${N}$ is $c$. 
	%Finally  $c$ is a $\mathcal{A}$-admissible curve.  
	%It follows that ${\bf t}_{{N}}(\mathcal{G}_{{N}}^0(x,-))$ is contained in the $\mathcal{AG}_{{N}}$-orbit of~$x$. 
   
% \begin{assertion} 
{\bf Step 2}:
 The  $\mathcal{AG}_{{N}}$-orbit of $x$ is contained in its $\mathcal{G}_{{N}}^0$-orbit.
% \end{assertion}
 
To prove this, let $y$ be arbitrary in the $\mathcal{AG}_{{N}}$-orbit of $x$. 
We will prove that there exists $g\in\mathcal{G}_{{N}}^0$ with 
${\bf s}(g)=\1_x$ and ${\bf t}(g)=\1_y$, using an idea from the proof of \cite[Prop. 1.1]{CraFe03}. 
  
%Consider 
To this end, note that from Lemma \ref{smooth-lift} it follows that there exists a smooth curve $\alpha:[0,1]\ap \mathcal{AG}_{{N}}$ whose projection $c:=\pi\circ\alpha $ on ${N}$ is a smooth $\mathcal{AG}_{{N}}$-admissible curve, satisfying $c(0)=x$ and $c(1)=y$,  
%Assume that 
and for which there exists an open subset $U\subseteq {N}$ for which $c([0,1])\subseteq U$ and the restriction of $\mathcal{AG}_{{N}}$  to $U$ is trivializable.  
If $E$ is the typical fiber of $\mathcal{AG}_{{N}}$, without loss of generality, we can identify this restriction with $U\times E$.
We can then write  $\alpha(t)=(c(t), u(t))\in U\times E$ for all $t\in[0,1]$ and  we denote by $\{\alpha_t\}_{t\in [0,1]}$ the smooth family of smooth sections of $\mathcal{AG}_{{N}}$ over $U$ defined by
$$(\forall t\in[0,1])(\forall y\in U)\quad \alpha_t(y)=(y, u(t)).$$
Now, for each $t\in [0,1]$, consider the right-invariant vector field 
$\tilde{\alpha}_t$ 
%$\{\tilde{\alpha}_t\}_{t\in [0,a]}$ 
on the restriction of  $\mathcal{G}_{{N}}^0$  to~$U$, 
satisfying $\tilde{\alpha}_t(\1_y)=\alpha_t(y)\in (\mathcal{AG}_{{N}})_y=T_{\1_y}(\Gc_N^0(y,-))$ for every $y\in U$. 
(Recall Remark~\ref{summary}.) 
Let $ \Phi^t_s$ be the flow  of the vector field $\tilde{\alpha}_t$ with initial  conditions $\Phi^t_0({\bf 1}_{c(t)})={\bf 1}_{c(t)}$. 
Since $c([0,1])\subseteq N$ is compact there exist $\delta>0$ and an open subset $W\subseteq\Gc_N$ with $\{\1_z\mid z\in c([0,1])\}\subseteq W\cap N\subseteq\Gc_N^0$ such that $\Phi^t_s\colon W\to \Gc_N^0$ is defined for all 
%$0\le s\le \delta$ 
$s\in[0,\delta]$
and all $t\in[0,1]$.  
For any $t\in[0,1]$, using the fact that $\1_y=\1_{c(a)}\in W$, we define 
$$\gamma (t):=\Phi^t_0(\1_y)\in\Gc_N^0
%({\delta}/{2})
.$$
Since  $\tilde{\alpha}_t$ is right invariant 
 it follows that $\tilde{\alpha}_t$ is ${\bf s}$-vertical and so
 ${\bf s}(\gamma (t))={\bf s}({\bf 1}_{c(t)})=
 %{\bf 1}_{c(t)}
 c(t)$ 
 for all $t\in[0,1]$. 
Hence for $g:=\gamma(a)$ one has ${\bf s}(g)=c(0)=x$ and ${\bf t}(g)={\bf t}({\bf 1}_{c(1)})=c(1)=y$, 
as we have wished for.
%It follows that $c(a)$ belongs to the   $\mathcal{G}_{{N}}$-orbit of $c(0)$.

Now in the most general situation assume that  $x,y\in{N}$  belong  to the same  $\mathcal{AG}_{{N}}$-orbit. 
Then by the Lemma \ref{reparam} there exists a smooth curve $\alpha:[0,1]\ap \mathcal{AG}_{{N}}$  
whose projection $c$ on ${N}$ is an $\mathcal{AG}_{{N}}$-admissible curve with $c(0)=x$ and $c(1)=y$.  

Since  $c([0,1])$ is compact, there exists   a family of open sets  $\{U_i\}_{i=1,\dots,n}$ with the following properties:
\begin{itemize}
\item $\mathcal{AG}_N$ is trivializable over each $U_i$ for ${i=1,\dots,n}$;
 
\item  if $n>1$ then $U_i\cap U_{i+1}\not=\emptyset$ for $i=1,\dots,n-1$ and $U_i\cap U_j=\emptyset$ 
for $1\leq i+1<j\leq n$ and ${i=1,\dots,n-2}$ if $n>2$;
 
\item  $c([0,1])\subset \bigcup\limits_{i=1}^n U_i$.
\end{itemize}
We fix points $t_0=0< t_1\leq\cdots\leq t_i\leq\cdots\leq t_n=1$ in $[0,1]$ with $c(t_i)\in U_i\cap U_{i+1}$ for $i=1,\dots,n-1$ if $n>1$.  
Set $x_i=c(t_i)$ for $i=0,\dots,n$. 
By the arguments above, there exists $g_i\in \mathcal{G}_{{N}}$ with ${\bf s}(g_i)=x_i$ and ${\bf t}(g_i)=x_{i+1}$ for $i=0,\cdots,n-1$. 
Then the product $g=g_{n-1}\cdots g_i\cdots g_0$ is well defined and, by construction, we have ${\bf s}(g)=x$ and ${\bf t}(g)=y$. 
Thus $y$ belongs to the $\mathcal{G}_{{N}}$-orbit of $x$. 

% If $c$ is only piecewise smooth, let $t_0=0< t_1\leq\cdots\leq t_i\leq\cdots\leq t_n=1$ in $[0,1]$ such that the restriction of $c$ to $]t_i,t_{i+1}[$ is smooth for $i=0,\cdots, n-1$. We can extend $c$ to a smooth curve $c_i$ on $[t_i,t_{i+1}]$. 
% Therefore from the argument in the above paragraph, there exists  $g_i\in \mathcal{G}_{{N}}$ with ${\bf s}(g_i)=c(t_i)$ and ${\bf t}(g_i)=c(t_{i+1})$ for $i=0,\cdots,n-1$ and as before $g=g_{n-1}\cdots g_i\cdots g_0$  satisfies   ${\bf s}(g)=x$ and ${\bf t}(g)=y$, which ends the proof of the assertion.

%\begin{remark}\label{Lx} In the proof of the previous assertions we do not  need that the $\mathcal{AG}_{{N}}$ orbit of $x$ is an immersed submanifold of $\mathcal{N}$.
%\end{remark}
\end{proof}

% \begin{remark}\label{splitG}
%\normalfont
 %  According to Theorem \ref{5.4}, the previous Proposition implies that the orbits of an integrable Banach Lie algebroid (\it{cf.} section \ref{tildeG}) are always only immersed Banach submanifold of $M$. 
%On the other hand for general Banach-Lie algebroid $(\mathcal{A},M,\rho, [\;,\;])$ if this algebroid is split  and the associated distribution $\rho(\mathcal{A}$ is closed we gives sufficient conditions under which  an analog result is true.  In finite dimension,  any orbit of a Lie algebroid is an immersed submanifold of $M$. However 
%we think that in the general context of Banach-Lie algebroid 
%the orbits are not always  immersed submanifold but unfortunately, we have no example of such a situation.
 
%We also note for later use that, by Proposition \ref{invarianttarget}\eqref{invarianttarget_item2} and the construction of the Banach-Lie  groupoid $\mathcal{G}$, the algebroid $\mathcal{AG}$ is split on a connected component ${N}$ if and only if the following condition is satisfied: 

%\centerline{ $\ker T_g{\bf s}\cap \ker T_g{\bf t} $ is complemented in $\ker T_g{\bf s}$ for all $x\in {N}$ and $g\in \mathcal{G}(x,-)$.}
% \end{remark} 

   %%%%%%%%%%%%%%%%%%%%%%%%%%%%%%%%%%%%%%%%%%%%%%%%%%%%%%%%%%%
\section{Banach-Lie algebroids and ${\bf s}$-simply connected groupoids}\label{Sect5}
%%%%%%%%%%%%%%%%%%%%%%%%%%%%%%%%%%%%%%%%%%%%%%%%%%%%%

The main result of this section is Theorem~\ref{ssymply}, 
and its proof requires the idea of monodromy group of a foliation. 

\subsection{Monodromy groupoid of a foliation} \label{grdfoliation}%({\bf a complete and detailed proof  of this result is needed})\\

Consider an involutive split subbundle $\mathcal{F}$ of the tangent $TM$ of a connected Banach manifold $M$. If $M$ is modeled on a Banach space $\mathbb{M}$, we have a decomposition $\mathbb{M}=\mathbb{F}\oplus \mathbb{T}$ and from the Frobenius theorem we have a "foliated atlas" $\{(U_\alpha,\phi_\alpha)\}$ on $M$ with the following properties:

 For each $\alpha$, $\phi_\alpha: U_\alpha\ap  \mathbb{M}\equiv \mathbb{F}\times \mathbb{T} $ is a diffeomorphism  with $\phi_\alpha(U_\alpha)=P_\alpha\times Z_\alpha\subset \mathbb{F}\times \mathbb{T}$, and $\phi_\alpha^{-1}(P_\alpha\times \{z\})$ is contained in the  leaf of $\mathcal{F}$ through $\phi_\alpha^{-1}(p,z)$ for any $(p,z)\in P_\alpha\times Z_\alpha$. 
 If we fix some point $x\in U_\a$ then, up to a translation in $\mathbb{T}$, we can always assume that $\phi_\alpha(x)=(p,0)\in \mathbb{F}\times\mathbb{T}$. Then $S_\a=\phi_\a^{-1}(\{p\}\times Z_\a)$ is called a {\it transversal}  at  $x$ and for any $y\in S_\alpha$ if $\phi_\a(y)=(p,z)$ then $\phi_\a^{-1}(P_\a \times\{z\})$ is called  the {\it plaque} through $y$ and of course is contained in the leaf through $y$.
 
 If $U_\alpha\cap U_\beta\not=\emptyset$ then the transition diffeomorphism $\phi_\alpha\circ\phi_\beta^{-1}(p,z)=(f(p,z), h(z))$ is a local diffeomorphism of $\mathbb{F}\times \mathbb{T}$ which respect this product structure.
 
Let $\Pi(\mathcal{F})$ be the set of homotopy classes with fixed end points of piecewise smooth paths contained in leaves of $\mathcal F$. One can define groupoid structure on $\Pi(\mathcal{F})$ in the same way as in the construction of fundamental groupoid $\Pi(M)$, see \ref{funfgrd}.
The differentiable structure  is  obtained by a direct adaptation to the Banach framework, step by step,  of such a structure as it is built   in \cite[Sect. 2]{Ph}. 
We will describe this construction in detail  in our Banach context:

{\bf Step 1}: 
{ \it Consider a leaf $L$ of $\mathcal{F}$ and a  piecewise smooth path $\g\colon [0,1]\to L$, 
	%contained in a leaf $L$ of $\mathcal{F}$ 
	and denote $x_0:=\gamma(0)$ and $x_1:=\gamma(1)$. 
	%which joins $x_0$ to $x_1$. 
%	If $\widetilde{S}\subseteq M$ is an embedded submanifold with $x_0\in \widetilde{S}$ and $T_x\widetilde{S}\oplus\Fc_x=T_xM$ for every $x\in\widetilde{S}$, then one has: 
\begin{enumerate}
\item\label{Step1_item1}   
   There exists two charts $(U_i,\phi_i)$ around  $x_i$, $i=0,1$, with $\phi_i(U_i)={P}_i\times Z $ where $P_i$ (resp. $Z$)  is a simply connected open  neighborhood of $0$ in $\mathbb{F}$ (resp. $\mathbb{T}$) and  
 $\phi_i(x_i)=(p_i,0)\in \mathbb{F}\times \mathbb{T}$.   
  
\item\label{Step1_item2}    
   For $i=0,1$ if $S_i:=\phi_i^{-1}(\{p_i\}\times Z)$ then 
 %  $S_0$ is an open neighborhood of $x_0\in\widetilde{S}$ and 
   there exists  a continuous map $H_\g: [0,1]\times S_0\ap M$ which is smooth in the second variable and $H_\g(\cdot,y)$ is a piecewise smooth path contained in the leaf through $y\in S_0$ with the following properties:
   $H_\g(\cdot,x_0%\g(0)
   )=\g$;
   $H_\g(0,y)=y$ and $H_\g(1,y)\in S_1$ 
   for all $y\in S_0$; 
   the mapping $S_0\to S_1$, 
  $y\mapsto H_\g(1,y)$, is a diffeomorphism.
\end{enumerate}
} 

\begin{proof} 
Let $\widetilde{S}\subseteq M$ be an embedded submanifold 
	with $x_0\in \widetilde{S}$ and $T_x\widetilde{S}\oplus\Fc_x=T_xM$ for every $x\in\widetilde{S}$. 
 There exist a finite set of  charts $(V_i,\psi_i)$ for $i=0,\dots, n$  of the foliated atlas    and a partition $t_0=0<t_1< \cdots<t_n<1=t_{n+1}$ satisfying the following conditions: 
\begin{itemize}
\item 
$\psi_i(V_i)=P_i\times Z_i$ where $P_i$ (resp. $Z_i$) is a simply connected open neighborhood of  $0\in\mathbb{F}$ (resp. $0\in\mathbb{T}$) for $i=0,\dots, n$;
\item
 $\g(t_i)\in V_{i-1}\cap V_{i}$ for $i=1,\dots, n$, $\g(0)=x_0\in V_0$, 
 and $\g(1)=x_1\in V_n$;  
\item
 $\g([t_{i},t_{i+1}])\subseteq P_i\times\{0\}\subseteq V_i$ for $i=0,\dots,n$.
\end{itemize}
By suitable translations in $\TT$ and $\FF$ we may assume that there exist $p_i\in P_i$ with $\psi(x_i)=(p_i,0)\in P_i\times Z_i$ for $i=0,1$, 
and we may also select the foliation chart $(V_0,\psi_0)$ such that $S_0:=\psi_0^{-1}(\{p_0\}\times Z_0)$  
is an open neighborhood of $x_0\in\widetilde{S}$ 

We will now prove by induction on $n$ that if one has a family of local charts satisfying the above conditions then there exists a continuous mapping~$H_\gamma$ with the properties required in \eqref{Step1_item2} above, with $(U_0,\phi_0):=(V_0,\psi_0)$ and $(U_1,\phi_1):=(V_{n+1},\psi_{n+1})$. 

\emph{Case $n=0$:} 
We denote $\psi:=\psi_0$, $V:=V_0$, $P:=P_0$, and $Z:=Z_0$ for simplicity. 
Using the path  $\widehat{\g}:=\psi\circ \g\colon [0,1]\to P\equiv P\times\{0\}$,  
define 
$$\widehat{H}\colon [0,1]\times Z\ap P\times Z, \quad \widehat{H}(t,z):=(\widehat{\g}(t),z).
$$
Obviously $\widehat{H}$ is continuous and  piecewise smooth (resp. smooth) in $t$ (resp. $z$) and the path $t\mapsto \widehat{H}(t,z)$ is contained in $P\times\{z\}$. 

The charts $(U_0,\phi_0)=(U_1,\phi_1):=(V,\psi)$ satisfy  condition~\eqref{Step1_item1}.  
For  $j=0,1$ and 
$S_j:=\psi^{-1}(\{p_j\}\times Z)$, 
one has a bijection $\psi_{p_j}\colon S_j\to Z$ satisfying $\psi(y)=(p_j,\psi_{p_j}(y))$ for all $y\in S_j$, 
 and then we can define 
$$H_\g: [0,1]\times S_0\to M,\quad 
H_\g(t,y):=\psi^{-1}(\widehat{H}(t,\psi_{p_0}(y)))
=\psi^{-1}(\widehat{\gamma}(t),\psi_{p_0}(y)).$$
Hence $H_\g(0,y)
=\psi^{-1}(p_0,\psi_{p_0}(y))
=y$ for all $y\in S_0$.  
For every $y\in S_0$ one then has 
$$\psi(H_\gamma(1,y))
=(\widehat{\gamma}(1),\psi_{p_0}(y)) 
=(\psi(x_1),\psi_{p_0}(y))
=(p_1,\psi_{p_0}(y))$$
hence 
$$H_\gamma(1,y)=\psi^{-1}(p_1,\psi_{p_0}(y))=\psi_{p_1}^{-1}(\psi_{p_0}(y)). $$
Since  $\psi_{p_j}\colon S_j\to Z$ is a global chart of $S_j$ for $j=1,2$,  
the above equality shows that the mapping 
$S_0\to S_1$, $y\mapsto H_\gamma(1,y)$ is a diffeomorphism that maps $p_0\in S_0$ to $p_1\in S_1$. 
Condition~\eqref{Step1_item2}  from Step~1 above is thus satisfied as well. 

\emph{Induction step:}
Assume that $n\ge 1$ and the assertion has been proved for $n-1$. 

Then, on the one hand,  by the induction hypothesis applied for the path $\gamma':=\gamma\vert_{[0,t_n]}$, 
one obtains a continuous map $H_{\gamma'}: [0,t_n]\times S_0\ap M$ which is smooth in the second variable and $H_{\gamma'}(\cdot,y)$ is a piecewise smooth path contained in the leaf through $y\in S_0$ with the following properties:
$H_{\gamma'}(\cdot,x_0%\g(0)
)=\gamma'$;
$H_{\gamma'}(0,y)=y$ and $H_{\gamma'}(t_n,y)\in S_{n-1}:=\psi_{n-1}^{-1}(\{p_{n-1}\}\times Z)$ 
for all $y\in S_0$; 
the mapping 
$$\Psi\colon S_0\to S_{n-1},\quad y\mapsto H_{\gamma'}(t_n,y)$$ 
is a diffeomorphism. 

On the other hand, by the Case $n=0$ applied for the path $\gamma'':=\gamma\vert_{[t_n,1]}$ and with the above embedded submanifold $S_{n-1}\subseteq M$ in the role of $\widetilde{S}$, 
one obtains a continuous map $H_{\gamma''}: [t_n,1]\times S_{n-1}\ap M$ which is smooth in the second variable and $H_{\gamma''}(\cdot,y)$ is a piecewise smooth path contained in the leaf through $y\in S_{n-1}$ with the following properties:
$H_{\gamma''}(\cdot,\gamma(t_{n-1}))=\gamma''$;
$H_{\gamma''}(t_n,y)=y$ and $H_{\gamma''}(1,y)\in S_n:=\psi_n^{-1}(\{p_n\}\times Z)$ 
for all $y\in S_{n-1}$; 
the mapping $S_{n-1}\to S_n$, 
$y\mapsto H_{\gamma''}(1,y)$, is a diffeomorphism. 

We now define $H_\gamma\colon[0,1]\times S_0\to M$ by 
$$H_\gamma(t,y)
=\begin{cases}
H_{\gamma'}(t,y) & \text{ if }t\in[0,t_n],\\
H_{\gamma''}(t,\Psi(y)) & \text{ if }t\in[t_n,1]
\end{cases}
$$
for all $y\in S_0$. 
This definition is correct since 
$H_{\gamma'}(t_n,y)=\Psi(y)=H_{\gamma''}(t_n,\Psi(y))$. 
It is straightforward to check the other properties of $H_\gamma$ required in the statement of Step~1 with the charts $(U_0,\phi_0):=(V_0,\psi_0)$ and $(U_n,\phi_n):=(V_n,\psi_n)$ , and this completes the proof. 
\end{proof}

{\bf Step 2}: {\it 
For any piecewise smooth path $\g\colon [0,1]\to M$ contained in a leaf, use Step~1 to define
$$\begin{aligned}
\mathcal{V}(\g,  (U_0,\phi_0),(U_1,\phi_1), & H_\gamma) \\
                :=\{[\g']\in  \Pi(\mathcal{F})\mid & {\bf s}(\g')\in S_0,\  {\bf t}(\g')\in S_1,\  
								[\g']% \text{  homotopic to  }
								 =[\a_{\g'}\star H_\g(\cdot,y)\star \b_{\g'}]\\
								& \text{for some }y\in S_0\} 
\end{aligned}$$
where $\a_{\g'}$ is a path which joins ${\bf s}(\g')$ to $H_\g(0,y)=y$ and lies in the plaque $\phi_0^{-1}(P_0\times \{z_0(y)\})$ through~$y\in S_0$, 
while and $ \b_{\g'} $ is a path which joins $H_\g(1,y)$ to ${\bf t}(\g')$ and lies in the plaque $\phi_1^{-1}(P_1\times \{z_1(y)\})$ 
through $H_\g(1,y)\in S_1$, 
where  $\phi_j(y)=(p_j,z_j(y))\in P_j\times Z$ for $j=0,1$. 
%~$S_0$ 
%the plaque through $y\in S_0$ 
% $\phi_1^{-1}(\{x\}\times T_1)$ 
%and $ \b_{\g'} $ is a path which joins $H_\g(1,y)$ to ${\bf t}(\g')$ and lies in~$S_1$. 
%the  plaque through $H_\g(1,y)\in S_1$. 
Then there exists a unique topology on $\Pi(\mathcal{F})$ 
for which 
the set of all the above sets $\mathcal{V}(\g, (U_0,\phi_0),(U_1,\phi_1),H_\g)$ 
%defines 
is a neighborhood base for $[\gamma]\in\Pi(\mathcal{F})$}. 

\begin{proof}
For every $[\gamma]\in\Pi(\mathcal{F})$ let us denote by $\Bc([\gamma])$ the set of all the above subsets $\mathcal{V}(\g, (U_0,\phi_0),(U_1,\phi_1),H_\g)$ of $\Pi(\mathcal{F})$. 
Then the assertion will follows as soon as we will have checked that the following properties: 
\begin{enumerate}[{\rm (BP1)}]
	\item\label{BP1} For every $[\gamma]\in\Pi(\mathcal{F})$ one has $\Bc([\gamma])\ne\emptyset$, and $[\gamma]\in\Vc$ for all $\Vc\in\Bc([\gamma])$. 
	\item\label{BP2} If $[\gamma]\in\Pi(\mathcal{F})$, $\Vc\in\Bc([\gamma])$ and $[\gamma']\in\Vc$, then there exists $\Vc'\in\Bc([\gamma'])$ with $\Vc'\subseteq\Vc$.  
	\item\label{BP3} If $[\gamma]\in\Pi(\mathcal{F})$ and $\Vc,\Vc'\in\Bc([\gamma])$, then there exists $\Vc''\in\Bc([\gamma])$ with $\Vc''\subseteq\Vc\cap\Vc'$. 
\end{enumerate}
(See for instance \cite[Prop. 1.2.3]{En89}.) 

The property~(BP\ref{BP1}) directly follows by Step~1. 

%We begin by proving an auxiliary  result. 

For~(BP\ref{BP2}), let  $[\g' ] \in \mathcal{V}(\g, (U_0,\phi_0),(U_1,\phi_1),H_\g)$. 
We can assume that   
${\g'}$ is the path $\a_{\g'}\star H_\g(\cdot,y)\star \b_{\g'}$ for some $y\in S_0$. 
Since $\phi_i(U_i)= P_i\times Z$, if $\phi_0(y)=(0,z)$ we have $\phi_0({\g}'(0))=({p}_0,z)$ and $\phi_1({\g}'(1))=({p}_1,H_\g(1,y))$. 
%Now let $S'_i=\phi_i(\{p_i\}\times Z)$. 
By 
composition of $\phi_i$ by  translation in $\mathbb{F}$ and $\mathbb{T}$  and after shrinking  the open set $U_i$ if necessary,  we get a new foliated chart $(U'_i,\phi'_i)$ such that  $U'_i\subset U_i$,  $\phi'_i\circ {\g}'(i)=({p}'_i,0) $ and $\phi_i(U'_i)=P'_i\times Z'_i$ with $Z'_i\subset  Z_i$ for $i=0,1$. 
If  $S'_i=(\phi'_i)^{-1}(\{p'_i\}\times Z'_i)$ as in the proof of 
Step~1 we can build  a map $H_{{\g}'}:[0,1]\times S'_0\ap M$ with properties of Step~1\eqref{Step1_item2} with respect to $(U'_i,\phi'_i)$ for $i=0,1$.
We have 
$$[\g']\in  \mathcal{V}({\g}', (U'_0,\phi'_0),(U'_1,\phi'_1),H_{{\g}'})\subseteq  \mathcal{V}(\g, (U_0,\phi_0),(U_1,\phi_1),H_\g).$$

For~(BP\ref{BP3}), let 
$$\Vc:=\mathcal{V}(\g, (U_0,\phi_0),(U_1,\phi_1),H_\g)
\text{ and }
\Vc'=\mathcal{V}(\g, (U'_0,\phi'_0),(U'_1,\phi'_1),H'_{\g}).$$ 
By a method similar to the one used for~(BP\ref{BP3}) above, 
one can then find a local chart $(U''_j,\phi''_j)$ at $\gamma(j)\in M$ 
with $U''_j\subseteq U_j\cap U'_j$ for $j=0,1$ and 
with 
$$\Vc(\g, (U''_0,\phi''_0),(U''_1,\phi''_1),H_{\g}'')\subseteq \Vc\cap\Vc'$$
and this completes the proof of Step~2. 
\end{proof}

{\bf Step 3}: {\it The  set of all the basic open sets $\mathcal{V}(\g, (U_0,\phi_0),(U_1,\phi_1),H_\g)$ defines a smooth atlas on $\Pi(\mathcal{F}) $  modeled on $\mathbb{F}\times\mathbb{T}\times \mathbb{F}$. 
The source map ${\bf s}$ and the target ${\bf t}$ are 
%split smooth 
submersions, and the inversion  ${\bf i}$ and the multiplication ${\bf m}$ are also smooth maps.}

\begin{proof}
%At first note that from the last argument in the proof of Step 2, each basic open set  
We already noted that $\mathcal{V}(\g, (U_0,\phi_0),(U_1,\phi_1),H_\g)$ depends only of the homotopy class $[\g]\in \Pi(\mathcal{F}) $. 
Consider the map 
$$\begin{aligned}
& \Phi_\g: \mathcal{V}(\g, (U_0,\phi_0),(U_1,\phi_1),H_\g)\ap \mathbb{F}\times\mathbb{T}\times \mathbb{F},\\
& \Phi_\g([\mu])=(\phi_0(\mu(0)), \phi_1(H_\g(1,\mu(0))),\phi_1(\mu(1))).
\end{aligned}$$
It is clear that $\Phi$ is well defined and injective  and depends only of the homotopy class of $\g$. 
In addition, since $\phi_i(U_i)=P_i\times Z$ is a product of simply connected open sets, it follows from the proof of Step~1  that $\Phi_\gamma$ is injective. 

For $i=0,1$ if  $P'_i\subseteq P_i$ and $Z'\subseteq Z$ are open sets  then $\Phi^{-1}_\g(P'_0\times Z'\times P'_1)$ is the basic set  $\mathcal{V}(\g, (U'_0,\phi'_0),(U'_1,\phi'_1),H'_\g)$ where $U'_i=\phi_i^{-1}(P'_i\times Z')$,  $\phi'_i={\Phi_i}\vert_{ U'_i}$ for $i=0,1$ and $H'_\g$ is the restriction of $H_\g$ to $[0,1]\times S'_0$ with $S'_0=S_0\cap U'_0$ and so $\Phi_\g$ is continuous. 
It follows by the  arguments of the previous proof that $\Phi_\g$ is open and so $\Phi_\g$ is a homeomorphism. It is obvious that  each transition  map $\Phi_\g\circ \Phi_{\g'}^{-1}$ between charts is smooth.

Now as the smoothness is a local property, it is obvious that ${\bf s}$, ${\bf t}$ and ${\bf i}$ restricted to a chart $(\mathcal{V}(\g, (U_0,\phi_0),(U_1,\phi_1),H_\g),\Phi_\g)$ are smooth. 
For the multiplication ${\bf m}$ it is also easy to verify the smoothness in local charts in $\Pi(\mathcal{F})\times \Pi(\mathcal{F}) $ and $\Pi(\mathcal{F})$. 
The splitness properties of the tangent maps of  ${\bf s}$ and ${\bf t}$ is also clear from the definition of $\Phi_\g$ on a chart. 
\end{proof}

%%%%%%%%%%%%%%%%%%%%%%%%%%%%%%%%%%%%%%%%%%%%%
\subsection{Integrable Banach-Lie algebroids and ${\bf s}$-simply connected groupoids}\label{tildeG} 
%%%%%%%%%%%%%%%%%%%%%%%%%%%%%%%%%%%%%%%%%%%%%%
As in finite dimension, we say that a Banach-Lie algebroid $\mathcal{A}$ is {\it integrable} if there exists a n.n.H Banach-Lie groupoid $\mathcal{G}$ such that $\mathcal{A}$ is isomorphic to the algebroid~$\mathcal{AG}$. 
If this is the case, then we say that the Banach-Lie algebroid $\mathcal{A}$ \emph{integrates to} the n.n.H. Banach-Lie groupoid $\mathcal{G}$. 

The purpose of this subsection is to prove the following theorem which  generalizes a well known result in finite dimensions 
and shows that every integrable Banach-Lie algebroid integrates to certain ${\bf s}$-simply connected n.n.H. Banach-Lie groupoid. 
(See \cite[Prop. 6.6]{MoMr}.)
%which is uniquely determined up to an isomorphism:

\begin{theorem} \label{ssymply} 
	If 
	$\mathcal{G}\tto M$ is  a n.n.H. Banach-Lie groupoid, 
	then there exist an ${\bf s}$-simply connected n.n.H. Banach-Lie groupoid $\tilde{\mathcal{G}}\tto M$ 
	and a morphism of Banach-Lie groupoids $\Phi: \tilde{\mathcal{G}}\ap \mathcal{G}$ over $\id_M$, 
	which is a local diffeomorphism
	and for which $\Ac\Phi\colon\mathcal{A}\tilde{\mathcal{G}}\to\mathcal{AG}$ 
	is an isomorphism of Banach-Lie algebroids over~$\id_M$. 
%	The Banach-Lie groupoid~$\tilde{\mathcal{G}}$ is uniquely determined by these properties, up to an isomorphism.
\end{theorem}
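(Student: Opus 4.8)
The strategy is to build $\tilde{\mathcal{G}}$ as a monodromy-type groupoid over $\mathcal{G}$, mimicking the finite-dimensional construction of an ${\bf s}$-simply connected cover. The key observation is that the foliation machinery of Subsection~\ref{grdfoliation} applies directly to the ${\bf s}$-fibers of $\mathcal{G}$. First I would pass to $\mathcal{G}^0\tto M$, the ${\bf s}$-connected open wide subgroupoid already introduced before Lemma~\ref{linkorbit}; since it is open in $\mathcal{G}$ it shares the same Banach-Lie algebroid (cf.\ Remark~\ref{summary}), so it suffices to produce the ${\bf s}$-simply connected cover of $\mathcal{G}^0$. Working componentwise on $M$ as in Theorem~\ref{5.4}\eqref{smooth4_item0}, we may assume $M$ is connected and $\mathcal{G}$ pure.

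Next I would carry out the monodromy construction fiberwise. For each $x\in M$, the ${\bf s}$-fiber $\mathcal{G}(x,-)$ is a connected pure Banach manifold on which the right-translation-invariant distribution $\Delta$ (as in the proof of Theorem~\ref{5.4}\eqref{smooth4_item1}) restricts; but more relevantly, I would instead consider on the whole of $\mathcal{G}$ the involutive split subbundle $\mathcal{F}:=T^{\bf s}\mathcal{G}=\ker T{\bf s}\subseteq T\mathcal{G}$, whose leaves are precisely the connected components of the ${\bf s}$-fibers of $\mathcal{G}$ (these are closed by the argument in Theorem~\ref{5.4}\eqref{smooth4_item1}). Applying Subsection~\ref{grdfoliation} to $(\mathcal{G},\mathcal{F})$ produces the monodromy groupoid $\Pi(\mathcal{F})\tto\mathcal{G}$, which carries a smooth n.n.H.\ Banach-Lie groupoid structure with ${\bf s}$-simply connected ${\bf s}$-fibers. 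I would then define $\tilde{\mathcal{G}}$ not as $\Pi(\mathcal{F})$ itself but as the fiber product: an arrow of $\tilde{\mathcal{G}}$ from $x$ to $y$ is a pair $(g,[\gamma])$ where $g\in\mathcal{G}(x,y)$ and $[\gamma]$ is the homotopy class (rel endpoints, inside the leaf) of a path in $\mathcal{G}(x,-)$ from $\1_x$ to $g$; equivalently $\tilde{\mathcal{G}}=\{[\gamma]\in\Pi(\mathcal{F})\mid \gamma(0)=\1_{{\bf s}(\text{end})}\}$, the union over $x$ of the universal covers of the ${\bf s}$-fibers based at $\1_x$. Source, target, identity and inversion are the evident maps; the multiplication is defined by concatenating a path with the left-translate of another, using that $L_g$ is a leaf-preserving diffeomorphism ${\bf s}\circ L_g={\bf s}$, so $TL_g(\mathcal{F})=\mathcal{F}$.

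I would then verify (BLG1)--(BLG4) for $\tilde{\mathcal{G}}\tto M$: the manifold structure comes from Step~3 of Subsection~\ref{grdfoliation} (restricted to the slice $\gamma(0)\in\1(M)$, which is a closed submanifold by Proposition~\ref{1}(ii)), ${\bf s}$ is a submersion because in the monodromy charts it is modeled on a canonical projection, inversion and multiplication are smooth because they are built from $L_g$, $R_g$, concatenation, and the already-smooth structure maps of $\mathcal{G}$. The covering map $\Phi\colon\tilde{\mathcal{G}}\to\mathcal{G}$, $[\gamma]\mapsto\gamma(1)$, is a morphism over $\id_M$ by construction, and it is a local diffeomorphism because restricted to each ${\bf s}$-fiber it is the universal covering map $\widetilde{\mathcal{G}(x,-)}\to\mathcal{G}(x,-)$ (a local diffeomorphism) while it is the identity in the transverse directions — this is exactly what the product form of the monodromy charts $\mathbb{F}\times\mathbb{T}\times\mathbb{F}$ gives. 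Finally $\Ac\Phi\colon\Ac\tilde{\mathcal{G}}\to\Ac\mathcal{G}$ is an isomorphism: by Remark~\ref{summary} the fiber of $\Ac\tilde{\mathcal{G}}$ at $x$ is $T_{\tilde\1_x}(\tilde{\mathcal{G}}(x,-))=T_{\tilde\1_x}(\widetilde{\mathcal{G}(x,-)})$, and $\Phi$ being a local diffeomorphism sending $\tilde\1_x$ to $\1_x$ identifies this isometrically with $T_{\1_x}(\mathcal{G}(x,-))=(\Ac\mathcal{G})_x$, compatibly with the anchors (since ${\bf t}\circ\Phi={\bf t}$) and with the brackets (since $\Phi$ intertwines right-invariant vector fields, being a groupoid morphism that is a local diffeomorphism).

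\textbf{Main obstacle.} The genuinely delicate point is not the algebra but checking that the monodromy topology and smooth structure of Subsection~\ref{grdfoliation}, built there for a foliation of a \emph{connected} Banach manifold by a \emph{split} involutive subbundle, genuinely applies to $\mathcal{F}=\ker T{\bf s}$ on the possibly-non-Hausdorff, possibly-non-connected total space $\mathcal{G}$ — in particular that $\mathcal{F}$ is split (which follows from ${\bf s}$ being a submersion, i.e.\ (BLG2)) and that the leaves are the components of the ${\bf s}$-fibers, together with the verification that the resulting $\tilde{\mathcal{G}}$ is again merely a n.n.H.\ (not necessarily Hausdorff) Banach manifold and that $\Phi$ and ${\bf m}$ remain smooth across the non-Hausdorff locus. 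Establishing smoothness of the groupoid multiplication on $\tilde{\mathcal{G}}$ in the monodromy charts, using that concatenation-followed-by-left-translation is expressed through the holonomy maps $H_\gamma$ of Step~1, is where the real work lies.
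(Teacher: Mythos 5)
Your proposal follows essentially the same route as the paper's proof: reduce to a connected base and an ${\bf s}$-connected groupoid, take $\tilde{\mathcal{G}}$ to be the homotopy classes (with fixed endpoints) of paths in the ${\bf s}$-fibers starting at the units, obtain its smooth n.n.H.\ structure from the monodromy groupoid $\Pi(\mathcal{F}_{\bf s})$ of the foliation by ${\bf s}$-fibers constructed in Subsection~\ref{grdfoliation}, and observe that $\Phi([\gamma])=\gamma(1)$ is a groupoid morphism over $\id_M$ and a local diffeomorphism, whence $\Ac\Phi$ is an isomorphism of Banach-Lie algebroids. One correction: the multiplication on $\tilde{\mathcal{G}}$ must be defined with \emph{right} translations (concatenate $\gamma_2$ with $R_{\gamma_2(1)}\circ\gamma_1$, which is defined because ${\bf s}(\gamma_1(t))={\bf t}(\gamma_2(1))$ for all $t$), not with a left translate of a path lying in an ${\bf s}$-fiber, since $L_g$ is only defined on the ${\bf t}$-fiber ${\bf t}^{-1}({\bf s}(g))$ and composability fails along such a path except at its endpoint; this is exactly how the paper defines $\tilde{\bf m}$.
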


%\begin{theorem} %\label{ssymply} 
%	Let
%$\mathcal{A}$  be an integrable Lie algebroid, and  
%$\mathcal{G}$ be  a n.n.H. Banach-Lie groupoid.
%such that $\mathcal{A}$ is isomorphic to $\mathcal{AG}$. 
%There exists a unique (up to isomorphism)  ${\bf s}$-simply connected n.n.H. Banach-Lie groupoid $\tilde{\mathcal{G}}$
%such that $\mathcal{AG}$ is isomorphic to $\mathcal{A}\tilde{\mathcal{G}}$ and 
%there exists a morphism $\Phi: \tilde{\mathcal{G}}\ap \mathcal{G}$ which is a local diffeomorphism.
%\end{theorem}

\begin{proof} This is an adaptation to our context of the proof of 
%Proposition 1.30 in  
\cite[Th. 1.31]{CraFe}. 
%(See also \cite[Prop. 6.6]{MoMr}.)
At first note  that we can prove the result  for the sub-groupoid associated to each connected component of $M$.  
Therefore from now, assume that $\mathcal{G}\tto M$ is a n.n.H. Banach-Lie groupoid, where $M$ is a connected (hence pure) Banach manifold. 
%and  such that the associated Lie algebroid $\mathcal{AG}$ of $\mathcal{G}$ is isomorphic to $\mathcal{A}$.  
From the construction of the algebroid $\mathcal{AG}$ and using the local normal form of a submersion in the Banach framework 
(that is, submersion charts), with the arguments of the proof of 
%Proposition 1.29 in   
\cite[Prop. 1.30]{CraFe} without loss of generality, we can also assume that $\mathcal{G}$ is ${\bf s}$-connected.

For any $x\in M$ denote by $\tilde{\mathcal{G}}(x,-)$ the universal covering of $\mathcal{G}(x,-)$ and we set
$$\tilde{\mathcal{G}}=\dis \bigcup_{x\in M}\tilde{\mathcal{G}}(x,-).$$
In fact $\tilde{\mathcal{G}}$ is the set of homotopy classes of paths in each $\mathcal{G}(x,-)$ (with fixed end points) and starting at ${\bf 1}_x$. 

Consider the foliation $\mathcal{F}_{\bf s}$ of $\mathcal{G}$ defined by the fibration ${\bf s}:\mathcal{G}\ap M$. 
The construction of the n.n.H. Banach-Lie Banach groupoid  structure on the monodromy groupoid of a (regular)  foliation ({\it cf}. subsection  \ref{grdfoliation}) used only local arguments. 
Therefore the set $\Pi(\mathcal{F}_{\bf s})$ of homotopy classes with fixed end of path contained in leaves
of $\mathcal{F}_{\bf s}$ has  a structure of n.n.H. Banach manifold and moreover, since the smoothness is a local property the source ${\bf s}([\g])=\g(0)$ and the target ${\bf t}([\g])=\g(1)$ are  %smooth split 
submersions from $\Pi(\mathcal{F}_{\bf s})$ onto $\mathcal{G}$. 
%One can remark that $\tilde{\mathcal{G}}={\bf s}^{-1}(M)$. Since $M$ is a closed split submanifold of $\mathcal{G}$ It follows that $\tilde{\mathcal{G}}$ is a closed split n.n.H Banach submanifold of $\Pi(\mathcal{F}_{\bf s})$. The restriction ${\tilde{\bf s}}$  and  ${\tilde{\bf r}}$ of ${\bf s}$ and ${\bf r}$  to $\tilde{{G}}$ respectively are also split submersion on $M$. Now, as in \cite{CrFe}, we can provide $\tilde{\mathcal{G}}$ with the following multiplication ${\tilde{\bf m}}$: 

We also define ${\tilde{\bf m}}([\g_1],[\g_2])$ as the homotopy class of the concatenation of $\g_2$ with $R_{\g_1(1)}\circ \g_1$.
Just as in the finite dimensional context we thus obtain a n.n.H. Banach-Lie groupoid structure on $\tilde{\mathcal{G}}$. 

Now,  since  ${\bf s}:\mathcal{G}\ap M$ is a submersion and $M$ is connected, from the normal form of a submersion in Banach framework (i.e., submersion charts), it follows that for any $g\in \mathcal{G}$ there exists a neighborhood $\mathcal{U}$  which is diffeomorphic to a product of open sets $U\times V\subset \mathbb{M}\times \mathbb{K}$  where the Banach space $\mathbb{M}$ (resp. $\mathbb{K}$) is the model space of $M$ (resp. of the typical fiber  of ${\bf s}$).  
In the same way, since $\tilde{\bf s}:\tilde{\mathcal{G}}\ap M$ is a  submersion, any  $[\g]\in \tilde{\mathcal{G}}$ has a neighborhood $\tilde{\mathcal{U}}$ diffeomorphic to a product of open $\tilde{U}\times\tilde{V}\subset \mathbb{M}\times \tilde{\mathbb{K}}$ where  $\tilde{\mathbb{K}}$ is the typical Banach model of the typical fiber  of $\tilde{\bf s}$. 
But the typical fiber of $\tilde{\bf s}$ is the universal covering of the typical fiber  of ${\bf s}$ so  $\tilde{\mathbb{K}}$ is isomorphic to $\mathbb{K}$.
 But, from the construction of the smooth structure on $\Pi(\mathcal{F}_{\bf s})$ and the characterization of $\tilde{\mathcal{G}}$ as submanifold of $\Pi(\mathcal{F}_{\bf s})$, for each $[\g]\in  \tilde{\mathcal{G}}$ one has an open neighborhood of type $\mathcal{V}(\g, (U_0,\phi_0),(U_1,\phi_1),H_\g)\cap \tilde{\mathcal{G}}$ and the restriction of $\Phi_\g$ to this open set is $\Phi_\g([\mu])=(\phi_1(H_\g(1,\mu(0))),\mu(1))$ which is a diffeomorphism on an open set of $\tilde{U}\times\tilde{V}\subset \mathbb{M}\times \tilde{\mathbb{K}}$. 
 It follows that  the map $\Phi: \tilde{\mathcal{G}}\ap \mathcal{G}$ given by $\Phi([\g])=\g(1)$ is a local diffeomorphism. 
 From the construction of the Banach-Lie algebroid  this last property implies that  
 $\Ac\Phi\colon\mathcal{A}\tilde{\mathcal{G}}\to\mathcal{AG}$ is an isomorphism of Banach-Lie algebroids, and this completes the proof. 
%The uniqueness (as in \cite{CraFe}) is straightforward.
\end{proof}

%%%%%%%%%%%%%%%%%%%%%%%%%%%%%%%%%%%%%%%%%%%%%%%%%%%%%%%%%%%%%%%%%%%%%%%%%%%%%%%
%%%%%%%%%%%%%%%%%%%%%%%%%%%%%%%%%%%%%%%%%%%%%%%%%%%%%%%%%%%%%%%%%%%%%%%%%%%%%%%%
\section{Locally transitive Banach-Lie groupoids and transitive algebroids}\label{Sect6}
 \subsection{Locally transitive  Banach-Lie  groupoids} 
 As usual for topological group\-oids, we will say that a Banach-Lie groupoid $\mathcal{G}\tto M$ is \emph{locally transitive} 
if each orbit $\Gc.x$ is open in $M$. 
This condition is equivalent to the following one: 
\begin{equation}\label{loctran_alt}
\text{For every $x\in M$ the map ${\bf t}_x\colon \Gc(x,-)\to M$ is a submersion.}
\end{equation}
In fact, we know that  ${\bf t}_x:\Gc(x,-)\ap \Gc.x$ is a submersion since it is a principal bundle 
by Theorem \ref{5.4}\eqref{smooth4_item2}. If $\Gc.x$ is open in $M$ so is a submanifold of $M$ whose tangent space at any point coincides to the tangent space of $M$ at that point and  so  ${\bf t}_x:\Gc(x,-)\ap M$ is a submersion.

\begin{lemma}\label{loctransitive}
If $\Gc\tto M$ is a  locally transitive Banach-Lie groupoid, 
then the following assertions hold: 
\begin{enumerate}[(i)]
\item\label{loctransitive_item1} 
Every orbit of $\Gc$ is a union of some connected components of~$M$. 
\item If an orbit of $\Gc$ is connected as a topological subspace of $M$ 
(for instance if it is the orbit of a point $x\in M$ whose ${\bf s}$-fiber $\Gc(x,-)$ is connected), 
then that orbit coincides with some connected component of~$M$. 
\item If some connected component of $M$ is invariant under the action of $\Gc$ on $M$, 
then that connected component coincides with an orbit of~$\Gc$.  
\end{enumerate}
\end{lemma}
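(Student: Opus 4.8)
The plan is to prove the three assertions of Lemma~\ref{loctransitive} in the order stated, using as the sole input the characterization~\eqref{loctran_alt} together with the fact, from Theorem~\ref{5.4}\eqref{smooth4_item2}, that for every $x\in M$ the map ${\bf t}_x\colon\Gc(x,-)\to\Gc.x$ is a surjective locally trivial fibration (in particular a continuous open surjection), and that $\Gc(x,-)$ is a (pure, Hausdorff) Banach manifold.

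First I would prove \eqref{loctransitive_item1}. Fix $x\in M$ and set $\Oc:=\Gc.x$. Since $\Gc$ is locally transitive, $\Oc$ is open in $M$. To see that $\Oc$ is also closed, I would use the standard groupoid-orbit argument: the orbits of $\Gc$ partition $M$ (this is immediate from the groupoid axioms, since $y\in\Gc.x$ defines an equivalence relation via $g\in\Gc(x,y)$), and each orbit is open; hence the complement $M\setminus\Oc$ is a union of orbits, each of which is open, so $M\setminus\Oc$ is open and $\Oc$ is closed. Thus $\Oc$ is open and closed in $M$, hence a union of connected components of $M$. This is the crux of the lemma, and it is short; the remaining two parts are formal consequences.

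Next, for (ii): suppose an orbit $\Oc$ of $\Gc$ is connected as a subspace of $M$. By \eqref{loctransitive_item1} it is a union of connected components of $M$; a nonempty connected subset of $M$ that is a union of connected components must consist of exactly one component, so $\Oc$ equals a connected component of $M$. For the parenthetical remark, if $\Gc(x,-)$ is connected then $\Oc={\bf t}_x(\Gc(x,-))$ is the continuous image of a connected space, hence connected, and the previous sentence applies. For (iii): let $N$ be a connected component of $M$ that is $\Gc$-invariant, and pick any $x\in N$. Then $\Gc.x\subseteq N$ since $N$ is invariant. On the other hand, by \eqref{loctransitive_item1}, $\Gc.x$ is a nonempty union of connected components of $M$; since $\Gc.x\subseteq N$ and $N$ is a single component, we get $\Gc.x=N$.

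The main (and only genuine) obstacle is the closedness of orbits in part~\eqref{loctransitive_item1}; once one observes that the orbits partition $M$ and each is open, closedness of a single orbit follows because its complement is a union of open sets. Everything after that is bookkeeping with connected components, so I do not anticipate any technical difficulty with parts (ii) and (iii).
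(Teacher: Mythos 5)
Your proof is correct and follows essentially the same route as the paper: openness of every orbit plus the fact that the orbits partition $M$ gives closedness, hence each orbit is a union of connected components, and (ii)–(iii) then follow by routine arguments about connected components (which the paper simply declares to "follow directly"). No gap.
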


\begin{proof}
It suffices to prove Assertion~\eqref{loctransitive_item1} 
and then the other assertions follow directly. 

Since for arbitrary $x\in M$ its orbit $\Gc.x:={\bf t}(\Gc(x,-))$ is open  and 
 $M$ is a disjoint union of the orbits of~$\Gc$, it then follows that every orbit is also closed 
because its complement in $M$ is the union of the other orbits, which is open. 
Thus every orbit of $\Gc$ is simultaneously open and closed, and then 
it is a union of some connected components of~$M$.
\end{proof}

Extending the finite-dimensional case, we have the following result. 

\begin{theorem}\label{transitive} 
Let $\mathcal{G}\tto M$ be a Banach-Lie groupoid. 
Then the following properties are equivalent:
\begin{enumerate}[(i)]
\item\label{transitive_item1} The groupoid $\mathcal{G}\tto M$ is locally transitive.
\item\label{transitive_item2}  The map $({\bf s},{\bf t}):\mathcal{G}\ap M\times M$ 
%and $({\bf s},{\bf t}):\mathcal{G}\ap M\times M$ are 
is  a submersion. %fiberwise surjective.
\item\label{transitive_item3} The orbits of $\mathcal{G}$ are unions of connected components of $M$.
\item\label{transitive_item4} The anchor $\rho:\mathcal{AG}\ap TM$ is split and surjective.
%\item\label{transitive_item5}  For every connected component ${N}$ of $M$ there exists $x\in {N}$ whose corresponding map
%${\bf t}_x: \mathcal{G}(x,-)\ap {N}$ is a submersion.
\end{enumerate}
If the above conditions are satisfied, then $\Gc$ is a split Banach-Lie groupoid and the map  ${\bf t}_x: \mathcal{G}(x,-)\ap \Gc.x$ 
is a $\mathcal{G}(x)$-principal bundle for every $x\in M$. 
%where ${N}$ is a connected component ${N}$ of $M$ that contains~$x$. 
\end{theorem}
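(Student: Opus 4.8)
The plan is to establish the cycle of implications \eqref{transitive_item1} $\Rightarrow$ \eqref{transitive_item2} $\Rightarrow$ \eqref{transitive_item4} $\Rightarrow$ \eqref{transitive_item1}, together with the equivalence \eqref{transitive_item1} $\Leftrightarrow$ \eqref{transitive_item3}, which was essentially proved already in Lemma~\ref{loctransitive} (local transitivity means every orbit is open, and since $M$ is a disjoint union of orbits, openness of all orbits is equivalent to each orbit being a union of connected components of $M$). For \eqref{transitive_item1} $\Rightarrow$ \eqref{transitive_item2}: by Theorem~\ref{5.4}\eqref{smooth4_item2} the map ${\bf t}_x\colon\Gc(x,-)\to\Gc.x$ is a principal bundle, hence a submersion, and by \eqref{loctran_alt} local transitivity upgrades this to ${\bf t}_x\colon\Gc(x,-)\to M$ being a submersion. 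Now I would fix $g\in\Gc$, set $x={\bf s}(g)$, and factor $({\bf s},{\bf t})$ locally near $g$: since ${\bf s}$ is a submersion, locally $\Gc\cong M\times\Gc(x,-)$ over $M$ via the source-fiber charts (normal form of a submersion), and under this identification $({\bf s},{\bf t})$ becomes $(z,h)\mapsto(z,{\bf t}_z(h))$; using that each ${\bf t}_z$ is a submersion onto $M$ (with split kernel) and that $\ker T_{{\bf 1}_z}{\bf s}$ varies smoothly, one checks $({\bf s},{\bf t})$ is a submersion at $g$. The splitting of $\ker T_g({\bf s},{\bf t})$ follows because $\ker T_g({\bf s},{\bf t})=\ker T_g{\bf s}\cap\ker T_g{\bf t}$ is, via the left translation $L_g$ (which intertwines these kernels, cf.\ the proof of Theorem~\ref{5.4}\eqref{smooth4_item1}), isomorphic to $\ker T_{{\bf 1}_x}{\bf s}\cap\ker T_{{\bf 1}_x}{\bf t}=T_{{\bf 1}_x}(\Gc(x))$, which is split in $\ker T_{{\bf 1}_x}{\bf s}=(\Ac\Gc)_x$ because ${\bf t}_x\colon\Gc(x,-)\to\Gc.x$ is a submersion.

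For \eqref{transitive_item2} $\Rightarrow$ \eqref{transitive_item4}: recall from Remark~\ref{summary} that $\rho\colon\Ac\Gc\to TM$ at the point $x$ is $T_{{\bf 1}_x}{\bf t}_x\colon T_{{\bf 1}_x}(\Gc(x,-))\to T_xM$ (after the identification via ${\bf 1}_*$). Restricting the submersion $({\bf s},{\bf t})$ to the source-fiber $\Gc(x,-)={\bf s}^{-1}(x)$ and composing with the second projection $M\times M\to M$ shows that ${\bf t}_x\colon\Gc(x,-)\to M$ is a submersion, and in particular $T_{{\bf 1}_x}{\bf t}_x$ is surjective with split kernel; hence $\rho_x$ is surjective with split kernel, i.e.\ $\rho$ is split (in the sense of Definition~\ref{split}, using Proposition~\ref{kerrho} to identify $\ker\rho_x$) and surjective. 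For \eqref{transitive_item4} $\Rightarrow$ \eqref{transitive_item1}: if $\rho_x\colon(\Ac\Gc)_x\to T_xM$ is surjective with split kernel, then by the normal form of a submersion, ${\bf t}_x\colon\Gc(x,-)\to M$ has surjective differential at ${\bf 1}_x$; by right-translating (the fibers of ${\bf t}_x$ over ${\bf t}(h)$ are $R_h(\Gc(x))$, and ${\bf t}_x\circ R_h={\bf t}_{{\bf s}(h)}$ up to identification — compare Proposition~\ref{invarianttarget_right}\eqref{invarianttarget_right_item2}) one gets surjectivity of the differential at every point of $\Gc(x,-)$, so ${\bf t}_x$ is a submersion and its image $\Gc.x$ is open in $M$; thus $\Gc$ is locally transitive.

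For the final claims, once the equivalent conditions hold: the split property of $\Gc$ follows from Proposition~\ref{kerrho}, which says $\Gc$ is split if and only if $\Ac\Gc$ is split, and \eqref{transitive_item4} gives that $\Ac\Gc$ is split; alternatively one sees directly that each $\Gc(x,y)$ is a submanifold of $\Gc$, since ${\bf t}_x\colon\Gc(x,-)\to M$ is now a submersion so $\Gc(x,y)={\bf t}_x^{-1}(y)$ is a submanifold of $\Gc(x,-)$, which is itself a submanifold of $\Gc$, and one concludes by a splitting-additivity argument as in Lemma~\ref{smooth3}. Finally, ${\bf t}_x\colon\Gc(x,-)\to\Gc.x$ being a $\Gc(x)$-principal bundle is exactly the conclusion of Theorem~\ref{5.4}\eqref{smooth4_item2} applied to the split groupoid $\Gc$, and since $\Gc.x$ is now open in $M$ this is a principal bundle over (an open subset, indeed a union of components, of) $M$. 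The main obstacle I expect is the careful bookkeeping in \eqref{transitive_item1} $\Rightarrow$ \eqref{transitive_item2}: verifying that $({\bf s},{\bf t})$ is a submersion (surjectivity of the differential together with a smoothly varying split complement to its kernel) requires combining the submersion charts for ${\bf s}$ with the submersion property of the family $\{{\bf t}_z\}$ in a way that respects the bundle structure, and making the splitting uniform along $g\in\Gc$ via left translations is the delicate point.
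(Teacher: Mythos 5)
Your proposal is correct in substance and rests on the same ingredients as the paper (the reformulation \eqref{loctran_alt}, Theorem~\ref{5.4}, Remark~\ref{summary}, Lemma~\ref{loctransitive}, Proposition~\ref{kerrho}), but it is organized differently. You prove the cycle \eqref{transitive_item1}$\Rightarrow$\eqref{transitive_item2}$\Rightarrow$\eqref{transitive_item4}$\Rightarrow$\eqref{transitive_item1}, whereas the paper proves \eqref{transitive_item2}, \eqref{transitive_item3}, \eqref{transitive_item4} each equivalent to \eqref{transitive_item1} separately. For \eqref{transitive_item1}$\Rightarrow$\eqref{transitive_item2} the paper identifies $\ker T_g({\bf s},{\bf t})$ with $T_g(\Gc(x,y))$ (split because $\Gc(x,y)$ is a submanifold, which is also how it obtains that $\Gc$ is split) and gets surjectivity by showing the image contains $T_xM\times\{0\}$ and $\{0\}\times T_yM$, using that both ${\bf t}\vert_{\Gc(x,-)}$ and ${\bf s}\vert_{\Gc(-,y)}$ are submersions; you instead compute in a submersion chart for ${\bf s}$, where $({\bf s},{\bf t})$ reads $(z,h)\mapsto(z,{\bf t}(z,h))$. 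For the converse direction the paper builds a local section of ${\bf t}_x$ out of a local section of $({\bf s},{\bf t})$, while you route through \eqref{transitive_item4} and propagate the pointwise condition at the units along $\Gc(x,-)$ by right translations; this has the merit of making explicit a step the paper leaves implicit in its terse treatment of \eqref{transitive_item1}$\Leftrightarrow$\eqref{transitive_item4}. The remaining points (the equivalence with \eqref{transitive_item3}, splitness of $\Gc$, the principal bundle statement via Theorem~\ref{5.4}\eqref{smooth4_item2}) match the paper's treatment.

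One sentence should be repaired: your justification of the splitting of $\ker T_g({\bf s},{\bf t})$ via $L_g$ is not valid as written, since $T_{\1_x}L_g$ is defined only on $T_{\1_x}(\Gc(-,x))$, not on $T_{\1_x}\Gc$, and the mere existence of a linear isomorphism onto a subspace that is split elsewhere does not give splitness in $T_g\Gc$ unless you also transport a complement. This is a removable detour rather than a real gap: your own chart description already yields the splitting, because the kernel is $\{0\}\times\ker T_g{\bf t}_x$, where $\ker T_g{\bf t}_x$ is split in $T_g(\Gc(x,-))=\ker T_g{\bf s}$ by \eqref{loctran_alt} (the submersion property of ${\bf t}_x$ holds at $g$, not only at $\1_x$), and then one adds a complement of $\ker T_g{\bf s}$ in $T_g\Gc$ coming from the submersion ${\bf s}$; alternatively one can use the paper's identification of the kernel with $T_g(\Gc(x,y))$.
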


%Consider a locally transitive Banach-Lie groupoid $\mathcal{G}\tto M$. 
%Then  the restriction $\mathcal{G}_N\tto N$ to any connected component $N$ of $M$ is a  Banach-Lie groupoid whose algebroid $\mathcal{AG}_N$ is the restriction of $\mathcal{AG}$ to $N$. 
%Therefore  $\mathcal{G}_N\tto N$ is a transitive Banach-Lie groupoid which is the gauge groupoid of the principal bundle ${\bf t}_x: \mathcal{G}_N(x,-)\ap N$  for any $x\in N$.

\begin{proof}

\eqref{transitive_item1}$\Rightarrow$\eqref{transitive_item2}:	
Denoting 
$$\alpha:=({\bf s},{\bf t}):\mathcal{G}\ap M\times M$$ 
we must check that the map 
$T_g\alpha\colon T_g\Gc\to T_xM \times T_yM$ is surjective and its kernel 
%has a direct complement in 
is a split subspace of 
$T_g\Gc$ for arbitrary $g\in\Gc$ with $\alpha(g)=:(x,y)$. 

To this end we first note that the hypothesis implies via~\eqref{loctran_alt} that $\Gc(u,v)={\bf t}_u^{-1}(v)$ is a submanifold of $\Gc$ for all $u,v\in M$ with $\Gc(u,v)\ne\emptyset$. 
That is, the n.n.H. Banach-Lie groupoid $\Gc\tto M$ is split. 
Moreover, 
$$\ker T_g\alpha=T_g(\alpha^{-1}(\alpha(g)))=T_g(\alpha^{-1}(x,y))
=T_g(\Gc(x,y))$$
hence this is a 
%complemented 
split 
subspace of $T_g\Gc$ 
since we have just seen that $\Gc(x,y)$ is a submanifold of~$\Gc$.

It remains to prove that the map 
$T_g\alpha\colon T_g\Gc\to T_xM \times T_yM$ is surjective. 
To this end it suffices to check that the range of $T_g\alpha$ contains both linear subspaces  $T_xM \times \{0\}$ and $\{0\} \times T_yM$. 
Our present hypothesis~\eqref{transitive_item1} ensures via \eqref{loctran_alt} that the  map ${\bf t}\vert_{\Gc(x,-)}\colon\Gc(x,-)\to M$ is a submersion, 
hence $T_g{\bf t}(T_g(\Gc(x,-)))=T_yM$.  
This further implies $T_g{\bf t}(T_g\Gc)=T_yM$, 
hence $T_g\alpha(T_g\Gc)\supseteq \{0\}\times T_yM$. 

On the other hand, since ${\bf t}\vert_{\Gc(y,-)}\colon\Gc(y,-)\to M$ is a submersion and ${\bf s}={\bf t}\circ {\bf i}$, where ${\bf i}\colon\Gc\to\Gc$ is a diffeomorphism, we obtain that 
${\bf s}\vert_{\Gc(-,y)}\colon\Gc(-,y)\to M$ is a submersion. 
Then, as above, we obtain $T_g\alpha(T_g\Gc)\supseteq T_xM\times\{0\}$, and we are done. 

\eqref{transitive_item2}$\Rightarrow$\eqref{transitive_item1}: 
Let $g\in\Gc$ be arbitrary and denote $\alpha(g)=:(x,y)\in M\times M$. Since $\alpha$ is a submersion, there exist open sets $U,V\subseteq M$ with $x\in U$ and $y\in V$, for which there exists a smooth map 
$\sigma\colon U\times V\to\Gc$ with $\sigma(x,y)=g$ and $\alpha\circ\sigma=\id_{U\times V}$. 
In particular, for every $v\in V$ we obtain ${\bf s}(\alpha(x,v))=x$ and ${\bf t}(\alpha(x,v))=v$. 
Therefore we obtain the well-defined smooth map 
$$\tau\colon V\to \Gc(x,-), \quad \tau(\cdot):=\alpha(x,\cdot),$$ 
which is defined on the neighborhood $V$ of $y\in M$ and satisfies 
${\bf t}\circ \tau=\id_V$. 
Since $y\in\Gc.x$ is arbitrary, we thus see that the groupoid $\Gc\tto M$ is locally transitive. 

%\eqref{transitive_item1}$\Leftrightarrow$\eqref{transitive_item2}: 
%The implication $\Rightarrow$ is trivial.  
%First  note that $\ker T_g({\bf s},{\bf t})=\ker T_g{\bf s}\cap \ker T_g{\bf t}$ for any $g\in\Gc$. 
%From %our assumption and 
%Remark~\ref{splitG},  $\ker T_g{\bf s}\cap \ker T_g{\bf s}$ is complemented in $\ker T_g{\bf s}$.  
%Since  by assumption (BLG2)  % \eqref{transitive_item2}
%the map ${\bf s}:\mathcal{G}\ap M$ is a submersion we have:
%\begin{eqnarray}\label{st-sub}
%T_g\Gc=(\ker T_g{\bf s}\cap \ker T_g{\bf s})\oplus
%\mathcal{V}\oplus \mathcal{W}= T_g\Gc({\bf t}_{{\bf t}(g)},-)\oplus %\mathcal{W}=\ker  T_g{\bf s}\oplus \mathcal{W}
%\end{eqnarray}
%On the one hand, from \eqref{st-sub} 
%from assumption (i)  we have $T{\bf t}_{{\bf t}(g)}(T_g)=T_{{\bf t}(g)}M$  so $T{\bf t}(T_g\Gc)=TM$. Now since   ${\bf s}$ is a submersion $T{\bf s}(T_g\Gc)=TM$
%it follows that $({\bf s},{\bf t}):\mathcal{G}\ap M\times M$ 
%On the other hand, from (ii) and \eqref{st-sub} one has $T({\bf s},{\bf t})(\mathcal{V}\oplus \mathcal{W})=T_g(M\times M)$. Since $T_g\Gc=\ker  T_g{\bf s}\oplus \mathcal{W}$ we must have 
%$$T_g\Gc({\bf t}_{{\bf t}(g)},-)=\ker T_g{\bf s}\cap \ker T_g{\bf s}\oplus \mathcal{V}$$ which implies assumption (i).

%\eqref{transitive_item2}$\Rightarrow$\eqref{transitive_item1}\\
%From assumption and the previous arguments we have
%$$T_g\Gc=\ker T_g{\bf s}\cap \ker T_g{\bf s}\oplus \mathcal{Z}$$
% Since  by assumption (BLG2)   the map ${\bf s}:\mathcal{G}\ap M$ is a submersion  we also have
% $$T_g\Gc=\ker  T_g{\bf s}\oplus \mathcal{W}$$.
% It follows that 

\eqref{transitive_item1}$\Rightarrow$\eqref{transitive_item3}: 
This implication is exactly 
Lemma~\ref{loctransitive}\eqref{loctransitive_item1}.

% the Banach space $\ker T_g{\bf s}\cap \ker T_g{\bf s}$ is complemented in $T_g\mathcal{G}$  by a subspace which is isomorphic to $T_{{\bf s}(g)}M\oplus T_{{\bf t}(g)}M$.  
%This implies that any connected component of $({\bf s},{\bf t})(\mathcal{G})$ is of type ${N}\times {N}$ where ${N}$ is a connected component of $M$.
%Consider the map $p_x:M\ap M\times M$ defined by $p_x(y)=(x,y)$. 
%Under assumption~\eqref{transitive_item1}, consider the fibration  $\tilde{\mathcal{G}}_x\ap M$   
%which is the pull-back   of $({\bf s},{\bf t}):\mathcal{G}\ap M\times M$ along $p_x$. 
%By construction we have 
%$$\tilde{\mathcal{G}}_x=\{ (y,g)\in M\times\mathcal{G}\;:\; {\bf s}(g)=x,\; {\bf t}(g)=y\}$$
% Therefore $\tilde{\mathcal{G}}_x\ap M$ is nothing but else ${\bf t}_x:\mathcal{G}(x,-)\ap M$ and  ${\bf t}_x$ is surjective. 
%Moreover from the previous argument  and Proposition~\ref{1} 
%it follows that ${\bf t}_x(\mathcal{G}(x,-))$ is a connected component of ${N}$. 
 
\eqref{transitive_item3}$\Rightarrow$\eqref{transitive_item1}: 
This is clear. 

\eqref{transitive_item1}$\Leftrightarrow$\eqref{transitive_item4}:  
For every $x\in M$ one has 
\begin{equation}\label{transitive_proof_eq1}
\rho\vert_{(\Ac\Gc)_x}=T_{\1_x}({\bf t}\vert_{\Gc(x,-)})
\colon T_{\1_x}(\Gc(x,-))\to T_xM
\end{equation} 
(see for instance Remark~\ref{summary}), 
and then it is clear that \eqref{transitive_item4} is equivalent to \eqref{loctran_alt}, which is further equivalent 
to~\eqref{transitive_item1}. 

 Now the last part is a direct application of Theorem \ref{5.4}\eqref{smooth4_item2} and the fact that each $\Gc_N\tto N$ is a principal bundle for any connected component $N$ of $M$. 
\end{proof}

%\begin{remark} \label{intesectioncomplemented} 
%%\normalfont 
%In Theorem \ref{transitive} the assumption "$\mathcal{AG}$ is split" is necessary in general since 
%if two spaces $\mathbb{E}_1$ and $\mathbb{E}_2$  are complemented in a Banach space $\mathbb{E}$  
%then their intersection $\mathbb{E}_1\cap \mathbb{E}_2$ need not be complemented in  $\mathbb{E}_1$ or in $\mathbb{E}_2$\footnote{ For an example of  non complemented intersection of two complemented Banach subspaces see for instance  http://mathoverflow.net/questions/85492/intersection-of-complemented-subspaces-of-a-banach-space}.
%\end{remark}

\begin{remark}\label{15Aug2016}
\normalfont
We note for later use that if $\mathcal{G}\tto M$ is a topological groupoid 
that satisfies the condition~(BLG1) of Definition~\ref{BLG} 
(i.e., $ \mathcal{G}$ is a n.n.H. Banach manifold and $M$ is a Banach manifold)
and for which the map $({\bf s},{\bf t}):\mathcal{G}\ap M\times M$
is a submersion, then the condition~(BLG2) follows automatically, 
that is, the map ${\bf s}:\mathcal{G}\ap M$ is a submersion. 
\end{remark}

\begin{remark}\label{19Aug2017}
\normalfont 
Let $\Gc\tto M$ be a Banach-Lie groupoid 
and denote by $M_0$ the set of all $x\in M$ 
whose corresponding map
${\bf t}_x: \mathcal{G}(x,-)\ap M$ is a submersion. 
It is clear that $M_0$ is an open subset of $M$. 
It follows by the discussion after~\eqref{loctran_alt} 
that $M_0$ is the set of all points in $M$ whose $\Gc$-orbits are open. 
In particular, $M_0$ is an open $\Gc$-invariant subset of~$M$, 
and the groupoid $\Gc\tto M$ is locally transitive if and only if $M_0=M$. 

The condition $M_0=M$ cannot be weakened to the condition that $M_0$ intersects every connected component of $M$. 
That is, if 
for every connected component ${N}$ of $M$ there exists $x\in {N}$ whose corresponding map
${\bf t}_x: \mathcal{G}(x,-)\ap {N}$ is a submersion, 
then the groupoid $\Gc\tto M$ need not be locally transitive. 
Examples in this connection are provided by any action of a Lie group on a connected manifold $A\colon G\times M\to M$ with a dense open nontrivial orbit. 
For instance, the tautological action of the group of all invertible matrices $G=\GL(n,\RR)$ on $M=\RR^n$ has its orbits $\RR^n\setminus\{0\}$ and $\{0\}$, 
and the corresponding groupoid $M\times G\tto M$ 
(see subsection~\ref{action}) is not locally transitive. 
A wider perspective on this problem is offered by the following discussion on transitive groupoids. 
\end{remark}

%%%%%%%%%%%%%%%%%%%%%%%%%%%%%%%%%%%%%%%
\subsection{Transitive Banach-Lie groupoids}
%%%%%%%%%%%%%%%%%%%%%%%%%%%%%%%%%%%%%%%%
A Banach-Lie groupoid $\mathcal{G}\tto M$ is called  \emph{transitive} if the map 
$({\bf s},{\bf t}):\mathcal{G}\ap M\times M$
is a surjective submersion.

It follows by Theorem~\ref{transitive} that 
if $\mathcal{G}\tto M$ is a Banach-Lie groupoid  
then the following properties are equivalent:
\begin{enumerate}[(i)]
\item\label{1transitive_item1} 
The groupoid $\mathcal{G}\tto M$ is transitive.
\item\label{1transitive_item2}  $\Gc$ is split and the maps $T({\bf s},{\bf t}):T\mathcal{G}\ap TM\times TM$ and 
$({\bf s},{\bf t}):\mathcal{G}\ap M\times M$ are surjective.
\item\label{1transitive_item3} The groupoid $\mathcal{G}$ is locally transitive and has only one orbit, namely $M$.
\end{enumerate}
If moreover the base $M$ is connected, then it has only one connected component, 
hence the above conditions are equivalent to the 
fact that the groupoid is locally transitive, 
which is further equivalent to any of the following properties: 
\begin{enumerate}[(i)]
\setcounter{enumi}{3}
\item\label{1transitive_item4} The anchor $\rho:\mathcal{AG}\ap TM$ is split and surjective.
\item\label{1transitive_item5}  
%There exists 
For every $x\in M$ the map 
%such that 
${\bf t}_x: \mathcal{G}(x,-)\ap M$ is a  surjective submersion.
\end{enumerate}
If this is the case, then for every $x\in M$ the map  ${\bf t}_x: \mathcal{G}(x,-)\ap M$ is a $\mathcal{G}(x)$-principal bundle. 

Consider a locally transitive Banach-Lie groupoid $\mathcal{G}\tto M$. Then  the restriction $\mathcal{G}_N\tto N$ to any connected component $N$ of $M$ is a  Banach-Lie groupoid whose algebroid $\mathcal{AG}_N$ is the restriction of $\mathcal{AG}$ to $N$. Therefore  $\mathcal{G}_N\tto N$ is a transitive Banach-Lie groupoid which is the gauge groupoid of the principal bundle ${\bf t}_x: \mathcal{G}_N(x,-)\ap N$  for any $x\in N$.

%%%%%%%%%%%%%%%%%%%%%%%%%%%%%%%%%%%%%%%%%%%%%%%%%%%%%%%%%%%%%
\subsection{Banach-Lie algebra bundles and transitive Banach-Lie algebroids}
%%%%%%%%%%%%%%%%%%%%%%%%%%%%%%%%%%%%%%%%%%%%%%%%%%%%%%%%%%%%%
Motivated by \cite[Def. 3.3.8]{Ma} 
we define a {\it  Banach-Lie algebra bundle}  
(for short an \emph{LAB}) as a Lie algebroid $(L,M,\rho,[\cdot,\cdot])$ with anchor $\rho\equiv 0$, 
such that for each $x\in M$ there exists a local trivialization $\Psi:\pi^{-1}(U)\ap U\times{\bf \mathfrak{g}}$ with $x\in U$, where ${\bf \mathfrak{g}}$ is a Banach-Lie algebra, and the restriction $\Psi_y:=\Psi\vert_{\pi^{-1}(y)}\colon\pi^{-1}(y)\ap \{y\}\times {\bf \mathfrak{g}}$ 
%of $\Psi$  
is a Lie algebra isomorphism for all $y\in U$. 

A {\it morphism } of LAB is a bundle morphism which is a morphism of Lie algebras between each fiber.

As in \cite[Prop. 3.3.9]{Ma}, any characteristic subalgebra ${\bf \mathfrak{h}}$ of ${\bf \mathfrak{g}}$  
(i.e., $\varphi({\bf \mathfrak{h}})={\bf \mathfrak{h}}$ for all automorphism $\varphi$ of ${\bf \mathfrak{g}}$) generates a sub-LAB $(K,M,[\cdot,\cdot])$
of $(L,M,[\cdot,\cdot])$. 
In particular if ${\bf \mathfrak{h}}$ is the center $Z{\bf \mathfrak{g}}$ (resp. the derived ideal $[{\bf \mathfrak{g}},{\bf \mathfrak{g}}]$) of ${\bf \mathfrak{g}}$ we get an associated sub-LAB denoted $ZL$ (resp.$[L,L]$).

For any Banach bundle $E\ap M$, we denote by $\End(E)$ the bundle over $M$ of bundle morphisms of $E$. 
By same argument as in \cite[Sect. 3.3]{Ma}, $\End(E)$ is a LAB  with typical fiber $\End({\bf \mathfrak{g}})$ and provided with the classical  bracket $[\Phi,\Phi']=\Phi\circ\Phi'-\Phi'\circ\Phi$. 
The   subalgebra $\Der({\bf \mathfrak{g}})$ of derivations of ${\bf \mathfrak{g}}$ of $\End({\bf \mathfrak{g}})$ is a characteristic algebra and so we get sub-LAB denoted $\Der(L)$ of $L$. 
The map $\ad: L\ap \Der(L)$, 
where $\ad: L_x\ap \Der(L_x)$ is the usual adjoint action in Lie algebras, is a morphism of LAB 
(same arguments as in the proof of \cite[Prop. 3.3.10]{Ma}). 
Therefore $\ad(L)$ is a sub-LAB of $\Der(L)$ 
% (A VERIFIER cf THEOREM 6.4.6 Mac) 
called the {\it adjoint} LAB of $L$. 

Motivated by 
Theorem~\ref{transitive}\eqref{transitive_item4}, 
a split Banach-Lie algebroid $(\mathcal{A},M,\rho,[\cdot,\cdot])$ is called \emph{transitive} if its anchor $\rho$ is surjective. 
In this case, as in finite dimension we have the following result. 

\begin{proposition}%\label{kerrho}
If $(\mathcal{A},M,\rho,[\cdot,\cdot])$ is a transitive   Banach-Lie algebroid, then $\ker\rho$ is a Banach subbundle of $ \mathcal{A}$. 
\end{proposition}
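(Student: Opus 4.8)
The plan is to show that $\ker\rho$ is a split subbundle of $\mathcal{A}$ by producing, around each point $x_0\in M$, a local trivialization of $\mathcal{A}$ adapted to $\ker\rho$. Since $(\mathcal{A},M,\rho,[\cdot,\cdot])$ is split by hypothesis, for every $x\in M$ the kernel $\ker\rho_x\subseteq\pi^{-1}(x)$ is a split subspace, so fiberwise the claim is already true; what must be proved is that these fibers fit together smoothly, i.e.\ $\ker\rho$ is a Banach subbundle in the sense of Subsection~\ref{subsect-nnH}. First I would fix $x_0\in M$, work in a local trivialization $\pi^{-1}(U)\cong U\times\mathbb{A}$ in which $M$ is modeled on a Banach space $\mathbb{M}$, the anchor is given by a smooth map $\rho\colon U\to L(\mathbb{A},\mathbb{M})$, and $\rho(x_0)\colon\mathbb{A}\to\mathbb{M}$ is a surjection (transitivity) with split kernel; so we may write $\mathbb{A}=\ker\rho(x_0)\oplus\mathbb{B}$ with $\rho(x_0)\vert_{\mathbb{B}}\colon\mathbb{B}\xrightarrow{\ \sim\ }T_{x_0}M\cong\mathbb{M}$ a topological isomorphism.

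The key step is then the standard open-mapping/implicit-function argument: since the set of surjective operators with split kernel that restrict to an isomorphism on a fixed complement is open in $L(\mathbb{A},\mathbb{M})$, after shrinking $U$ the operator $\rho(x)\vert_{\mathbb{B}}\colon\mathbb{B}\to\mathbb{M}$ remains an isomorphism for every $x\in U$, and $x\mapsto(\rho(x)\vert_{\mathbb{B}})^{-1}$ is smooth (inversion is smooth on the open set of invertibles). Consequently $\rho(x)$ is surjective for all $x\in U$ and
$$
P(x):=\mathrm{id}_{\mathbb{A}}-(\rho(x)\vert_{\mathbb{B}})^{-1}\circ\rho(x)\colon\mathbb{A}\to\mathbb{A}
$$
is a smooth family of bounded idempotents with $\Ran P(x)=\ker\rho(x)$ and $\Ker P(x)=\mathbb{B}$. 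Writing $E(x):=\ker\rho(x)=\Ran P(x)$, the map $(x,v)\mapsto(x,P(x)v)$ realizes $\ker\rho$ over $U$ as a complemented subbundle of $U\times\mathbb{A}$, and the smooth projectors $P(x)$ provide the required subbundle charts; the pure/typical-fiber issue is handled by restricting to a connected neighborhood so that $\ker\rho(x)$ is isomorphic to $\ker\rho(x_0)$ throughout $U$.

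Finally I would note that this local construction is chart-independent in the obvious way, so the pieces glue to give the Banach subbundle structure on $\ker\rho=\bigcup_{x\in M}\ker\rho_x$, with inclusion $\ker\rho\hookrightarrow\mathcal{A}$ a (split) Banach bundle morphism; in particular each fiber $\ker\rho_x$ carries the induced Banach-Lie algebra structure, so $\ker\rho$ is even a sub-LAB of $\mathcal{A}$ (the anchor being zero on it). The main obstacle is the smoothness and openness step: one must verify carefully that $x\mapsto(\rho(x)\vert_{\mathbb{B}})^{-1}$ is smooth near $x_0$ and hence that $P(x)$ is a smooth idempotent-valued map, which is exactly where the \emph{split}-anchor hypothesis is used (without a continuous choice of complement $\mathbb{B}$ at $x_0$ the family of kernels need not vary smoothly, as the non-split examples after Subsection~\ref{action} illustrate). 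Everything else is routine bookkeeping with Banach bundle charts.
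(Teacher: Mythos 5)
Your proof is correct and follows essentially the same route as the paper's: fix a closed complement $\mathbb{B}$ of $\ker\rho_{x_0}$ in a local trivialization (using the split hypothesis), shrink $U$ so that $\rho(x)\vert_{\mathbb{B}}$ remains invertible with smooth inverse, and use the smooth family $(\rho(x)\vert_{\mathbb{B}})^{-1}\circ\rho(x)$ to straighten the varying kernels. Your idempotents $P(x)$ are just a repackaging of the paper's maps $\sigma_y=-(\rho_y\vert_{\mathbb{M}})^{-1}\circ\rho_y$ and the associated automorphisms $\kappa_y$, so the two arguments coincide in substance.
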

%\marginpar{Why is $\Kc$ a locally trivial vector bundle?}

\begin{proof}
Denote $\mathcal{K}=\ker\rho$ and fix some  $x\in M$. 
By hypothesis we have $\mathcal{A}_x=\ker \rho_x\oplus F_x$ for a suitable closed subspace $F_x\subseteq\Ac_x$, and $\rho_x$ is surjective. 
It follows that the restriction ${\rho_x}\vert_{ F_x}$ is an isomorphism onto $T_xM$. 
Choose a trivialization $\Psi\colon \mathcal{A}_U\ap \phi(U)\times \mathbb{A}$ such that $(U,\phi)$ is a local chart at~$x$. 
We can identify $\mathcal{A}_x$ with the typical fiber $\mathbb{A}$ of $\Ac$ on the connected component of $M$ that contains~$x$. 
We simply denote by $\mathbb{K}:=\ker \rho_x$ and  $\mathbb{F}:=F_x$. 
Therefore we can also identify $\mathbb{A}$ with $\mathbb{K}\times \mathbb{F}$. Moreover for $x$ fixed,  $\mathbb{F}$ is isomorphic to the model space $\mathbb{M}$ of $M$ at $x$, 
and then the  trivialization can be viewed as a map  $\Psi\colon\mathcal{A}_U\ap U\times \mathbb{K}\times \mathbb{M}$. On the other hand $T\phi:TM\vert_{ U}\ap \phi(U)\times \mathbb{M}$ is a trivialization of $TM$ over $U$. 
Since we have to build a local trivialization of $\mathcal{A}$ whose restriction induces a local trivialization for~$\mathcal{K}$,  without loss of generality, we  may assume that $U$ is an open subset of $\mathbb{M}$,  $TM\vert_{ U}$ is the trivial bundle $U\times \mathbb{M}$ and  $\mathcal{A}_{U}$ is the trivial bundle $U\times \mathbb{K}\times \mathbb{M}$. 
With this notation, $\rho$ can be written as a map $(y, u)\mapsto (y,\rho_y(u))$ where $y\mapsto \rho_y$  is a map from $U$ to $L(\mathbb{K}\times \mathbb{M},\mathbb{M})$ and moreover the restriction of $\rho_x$ to $\mathbb{M}$ belongs to $GL(\mathbb{M})$. 
Thus after shrinking $U$ if necessary, we may assume that the restriction of $\rho_y$ to $\mathbb{M}$  belongs to $GL(\mathbb{M})$ for any $y\in U$. 
We set
$\sigma_y=-({\rho_y}\vert_{ \mathbb{M}})^{-1}\circ \rho_y$. Then the map $y\mapsto \s_y$ is a smooth map from $U$ to $L(\mathbb{K}\times \mathbb{M},\mathbb{M})$
and has  the following properties:

$\bullet\;\;$  ${\sigma_y}\vert_{\mathbb{M}}=-Id_\mathbb{M}$

 $\bullet\;\;$  $\ker\sigma_y=\ker \rho_y=({\rho_y}\vert_{ \mathbb{M}})^{-1}\circ \rho_y(\mathbb{K})$.

 It follows that $\kappa_y:(k, m)\mapsto (k,\sigma_y(k,m))$  is an isomorphism from $\mathbb{K}\times \mathbb{M}$ to $(\ker \rho_y)\times \mathbb{M}$ and $y\mapsto \kappa_y$ is a smooth map from $U$ to $GL(\mathbb{A})$. This implies that $(y,u)\mapsto (y, \kappa_y^{-1}(u))$ is a trivialization of $U\times \mathbb{A}$ whose restriction to $\mathcal{K}\vert_{ U}\equiv \dis\cup _{y\in U}\ker \rho_y$ defines a  trivialization of   $\mathcal{K}\vert_{ U}$.
 \end{proof}

Now given a transitive   Banach-Lie algebroid $(\mathcal{A},M,\rho,[\cdot,\cdot])$ since the restriction of $\rho$ to $\mathcal{K}$ is null and the bracket in restriction to  
global (resp. local) sections of $\mathcal{K}$ takes values in the module of global (resp. local) sections of $\mathcal{K}$ this implies that  $\mathcal{K}$ is a $LAB$ over $M$.

In the case of the algebroid $\mathcal{AG}$ of  Banach-Lie groupoid $\mathcal{G}$ over $M$, according to section \ref{transitive}, the algebroid $\mathcal{AG}$ is transitive if and only if $\mathcal{G}$ is locally transitive. Moreover, the restriction $\mathcal{G}_{N}$ to any connected component ${N}$ of $M$ has a structure of principal bundle whose structural Banach-Lie group is the typical model $G_{N}$ of the isotropy group (in $\mathcal{G}_{N}$) of any point $x\in {N}$ and so   $G_{N}$ is a gauge groupoid of this principal bundle.

%%%%%%%%%%%%%%%%%%%%%%%%%%%%%%%%%%%%%%%%%%%%%%%%%%%%%%%%%%%%%%%%%%%%%
\subsection{Atiyah  exact sequence  of a principal bundle}
%%%%%%%%%%%%%%%%%%%%%%%%%%%%%%%%%%%%%%%%%%%%%%%%%%%%
A particularly important transitive Banach-Lie groupoid is the gauge groupoid of a Banach principal bundle. 
Therefore we will  look for a  famous short exact sequence of Banach algebroids canonically associate to this context: 
the \emph{Atiyah exact sequence}. 

Let $\pi:P\ap M$ be a Banach principal bundle with structural  Banach-Lie group $G$ and $\tau:E\ap P$ be a Banach bundle over $P$. Assume that there exists a smooth right action $E\times G\ap E$, $(\xi,g)\mapsto \xi g$ such that
\begin{enumerate} 
\item\label{H1}  $\xi\mapsto \xi g$ is an bundle isomorphism over the right translation $R_g:P\ap P$.
\item\label{H2} $E$ is covered by equivariant trivializations in the sense that around each $u_0\in P$ there is an open set of the form $\mathcal{U}= \pi^{-1}(U)$ where $U$ is a neighborhood of $\pi(u_0)$ in $M$ and a Banach bundle chart
$$\psi:\mathcal{U}\times \mathbb{E} \ap E_{\mathcal{U}}$$
which is equivariant in the sense that $\forall u \in \mathcal{U},\;\xi\in \mathbb{E}, \;  g \in G$ then 
$$\psi(ug, \xi)= \psi(u, \xi)g.$$
\end{enumerate}
Under these assumptions we have the following result. 

\begin{proposition}\label{EG} 
The quotient set $E/G$ has  a canonical structure of Banach bundle $\hat{\tau}: E/G\ap M$ such that the natural projection  $q:E\ap E/G$ is a surjective submersion and a bundle morphism over $\pi: P\ap M$. 
Moreover $\tau:E\ap P$ can be identified with the pull back of $ \hat{\tau}: E/G\ap M$  by $\pi:P\ap M$. 
\end{proposition}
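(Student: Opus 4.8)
The plan is to construct the Banach bundle structure on $E/G$ locally via the equivariant trivializations and then check the asserted properties fiberwise. First I would fix the open covering of $M$ by sets $U$ for which one has both a principal bundle trivialization $\pi^{-1}(U)\cong U\times G$ and an equivariant bundle chart $\psi\colon \pi^{-1}(U)\times\mathbb{E}\to E_{\pi^{-1}(U)}$ as in hypothesis~\eqref{H2}. Over such a $U$, using the trivialization $\pi^{-1}(U)\cong U\times G$, the equivariance condition $\psi(ug,\xi)=\psi(u,\xi)g$ shows that the $G$-orbit of $\psi((x,\mathbf 1),\xi)$ meets the fiber $E_{(x,\mathbf 1)}\cong\mathbb{E}$ in exactly one point; hence the composite $U\times\mathbb{E}\hookrightarrow E_{\pi^{-1}(U)}\to (E/G)|_U$ is a bijection, and I would declare it to be a bundle chart $\hat\psi_U\colon U\times\mathbb{E}\to(E/G)|_U$.

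Next I would verify that on overlaps $U\cap V$ the transition maps $\hat\psi_V^{-1}\circ\hat\psi_U$ are smooth, fiberwise-linear, and hence define a genuine Banach vector bundle $\hat\tau\colon E/G\to M$ with typical fiber $\mathbb{E}$. This reduces to combining the smoothness of the principal bundle transition functions $U\cap V\to G$ with the smoothness of the equivariant chart transition functions of $E$ and with the smoothness of the $G$-action on $E$; it is a routine local computation once the charts are set up correctly, so I would not grind through it. With this structure in place, the quotient map $q\colon E\to E/G$ reads in the local charts $\pi^{-1}(U)\times\mathbb{E}\cong (U\times G)\times\mathbb{E}\to U\times\mathbb{E}$ as essentially the projection forgetting the $G$-factor (after applying the equivariance to normalize), which is manifestly a surjective submersion and a Banach bundle morphism covering $\pi\colon P\to M$; the fact that $\ker Tq$ is split is immediate in these trivializations since it corresponds to $T G\times\{0\}$.

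For the last assertion I would exhibit the canonical map $E\to \pi^*(E/G)=P\times_M (E/G)$, $\xi\mapsto(\tau(\xi), q(\xi))$, and check in the local charts above that it is a bundle isomorphism over $\mathrm{id}_P$: in the trivialization it sends $((x,g),\xi)$ to $((x,g),\hat\psi_U(x,\xi\cdot g^{-1}\text{-normalization}))$, which is clearly a fiberwise linear isomorphism depending smoothly on the base point, with smooth inverse.

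The main obstacle I anticipate is the \emph{well-definedness and smoothness of the charts $\hat\psi_U$ and their transitions}: one must be careful that the identification of $(E/G)|_U$ with $U\times\mathbb{E}$ genuinely does not depend on the choice of representative in a $G$-orbit, and that the resulting atlas is smoothly compatible — this is exactly where hypotheses~\eqref{H1} and~\eqref{H2} are used in an essential way, and where the argument differs from a naive quotient construction (the action of $G$ on $E$ need not be free in the fiber direction of $\tau$, so $E/G$ is emphatically \emph{not} an orbit space in the sense of Lemma~\ref{smooth2}; it is the combination of the free $G$-action on $P$ downstairs with the equivariant local triviality of $E$ that makes the quotient a manifold). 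Once the charts are shown to be well-defined and smoothly compatible, everything else follows by unwinding definitions in local trivializations.
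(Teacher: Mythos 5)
Your proposal follows essentially the same route as the paper's proof: charts $\psi^G\colon U\times\mathbb{E}\to (E/G)\vert_U$ obtained by evaluating the equivariant trivialization along the identity section of $\pi^{-1}(U)\cong U\times G$ (so that each $G$-orbit meets the fiber over $(x,e)$ exactly once), smooth overlap maps $(\psi_1^G)^{-1}\circ\psi_2^G(x,\xi)=\psi_1^{-1}\circ\psi_2(x,e,\xi)$, and the pullback identification of $\tau\colon E\to P$ with $\pi^*(E/G)$ via $\xi\mapsto(\tau(\xi),q(\xi))$. The only substantive details the paper carries out that you leave implicit are the fiberwise linear structure (checking that $q_u\colon E_u\to (E/G)_x$ is a linear isomorphism) and the compatibility of the atlas with the quotient topology, which the paper settles by showing the $G$-action on $E$ is proper (inherited from properness on $P$ via Lemma~\ref{smooth1}) and invoking Lemma~\ref{popertop} to get Hausdorffness of $E/G$ and continuity of $q$; your outline would close these points in the same way.
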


The method of proof is the same as that of \cite[Prop. 3.1.1]{Ma05} and we only give the key points which are essential  in this Banach context. 

\begin{proof}
We denote by $\hat{\xi}$ the $G$-orbit of any $\xi\in E$, and then we define 
$$\hat{\tau}: E/G\ap M,\quad 
\hat{\tau}(\hat{\xi})=(\pi\circ \tau)(\xi).$$ 
%From the previous assumptions t
This map is well defined 
since for all $\xi\in E$ and $g\in G$ one has 
$\pi(\tau(\xi g))=\pi(\tau(\xi)g)=\pi(\tau(\xi))$, 
where the first equality follows from the above hypothesis~\eqref{H1} 
while the second equality follows by the fact that $\pi\colon P\to M$ is a aprincipal bundle with its structural group~$G$. 

We now define a Banach structure on each fiber $\hat{\tau}^{-1}(x)$. 
If $\bar{\xi}$ and $\bar{\eta}$ belongs to  $\bar{\tau}^{-1}(x)$ then there exists $g\in G$ such that ${\xi g}$ and ${\eta}$  such that $\tau(\xi g)=\tau(\eta)$. 
Thus the sum $\xi g+\eta$ is well defined and we can define 
$\hat{\xi}+\hat{\eta}=\widehat{\xi g+\eta}$  and $\lambda\hat{\xi}=\widehat{\lambda\xi}$. 
It is easy to see that these operations are well defined and endow $\hat{\tau}^{-1}(x)$ with a vector bundle structure.
Now  from assumption~\eqref{H2} preceding the statement of Proposition~\ref{EG}, the action of $G$ on $E$ must be proper. Indeed, consider a net $\{(g_j,\xi_j)\}_{j\in J}$ in $G\times E$ for which there exists $\lim\limits_{j\in J}(g_j.\xi_j,\xi_j)=:(\eta,\xi)\in E\times E$. We set  $u_j=\tau(\xi_j)$, $v_j=\tau(g_j.\xi_j)$, $u=\tau(\xi)$ and $v=\tau(\eta)$. 
Therefore we have $\tau(\lim\limits_{j\in J}(g_j.\xi_j,\xi_j))=\lim\limits_{j\in J}(\tau(g_j.\xi_j),\tau(\xi_j))=(v,u)$. 
From the properness of the action of $G$ on $P$, it follows that $\lim\limits_{j\in J}g_j=:g $ exists in $G$  and so from Lemma \ref{smooth1}, the action of $G$ on $E$ is proper. 
Then Lemma~\ref{popertop} implies  that the quotient space~$E/G$ is a Hausdorff space  and the natural projection $q:E\rightarrow E/G$ is continuous.  
For any $u\in\pi^{-1}(x)$ the restriction $q_u: E_u\rightarrow (E/G)_x$ is a linear map which is surjective. 
The  assumption~\eqref{H2} %(2) 
implies that $q_u$ is injective and then $q_u$ is an (algebraic) isomorphism. 
But $E_u$ is topological subspace of $E$ so $q_u: E_u\rightarrow E/G$ is continuous. 
Therefore $(E/G)_x$ can be provided with  a Banach space structure isomorphic to  $E_u$. Moreover since $q_u$ is an isomorphism the inclusion of $(E/G)_x$  in $E/G$ is continuous. % i:$(E/G)_x \rightarrow E/G$ can be written $i=q\circ j\circ q_u^{-1}$ where $j:E_u\rightarrow E$ is the canonical inclusion.

Next we must show that $\hat{\tau}: E/G\rightarrow M$ is a Banach bundle. Fix some $x_0\in M$  and $u_0\in E_{x_0}$. According to assumption (2), consider an equivariant trivialization $\psi:\mathcal{U}\times \mathbb{E} \ap E_{\mathcal{U}}$  so that  around  $u_0$ which can be chosen so that $\mathcal{U}$ is a trivialization of $P$ which is isomorphic to $U\times G$. For simplicity we can identify $\mathcal{U}$ with $U\times G$. Then consider the map
$$\psi^G: U\times \mathbb{E}\longrightarrow (E/G)\vert_{ U}$$
 given by $\psi^G(x,\xi)=\widehat{\psi(x,e,\xi)}$. Clearly $\psi^G$ is an injective continuous map. The    restriction  of $\psi^G$ to $\{x\}\times \mathbb{E}$ is nothing else but $q_u$ according to the identification  $\mathcal{U}\equiv U\times G$ and so is an isomorphism.  From the assumption (2) and the fact that the action of $G$ on $P$ is proper,  it follows easily that $\psi^G$ is a homeomorphism % take $\psi(x,g,\xi)\in E\vert_{ U\times G}$ then \widehat{ \psi(x,g,\xi)\mapsto (x,e,\xi$ is the inverse of \psi^G$ and is clearly continuous.
 and so we get a chart on $E/G$. Now  consider two such charts $\psi_i^G: U_i\times \mathbb{E}\longrightarrow (E/G)\vert_{ U_i}  $ for $i=1,2$ with  $(E/G)\vert_{ U_1}\cap  (E/G)\vert_{ U_2}\not=\emptyset$ then we have 
  $$(\psi^G_1)^{-1}\circ \psi^G_2(x,\xi)=(\psi_1)^{-1}\circ\psi_2(x,e,\xi)$$
  Therefore the set of charts of type $\psi^G: U\times \mathbb{E}\to (E/G)\vert_{ U}$ defines a Banach manifold structure on $E/G$ which is also a Banach bundle structure on $M$.
  Finally by construction we have $\hat{\tau}\circ q=\pi\circ \tau$ and since $q_u$ is an isomorphism it follows that $E\to P$ is the pullback of $\hat{\tau}:E/G\ap M$ over $\pi$. 
\end{proof} 

\subsection*{Application of Proposition \ref{EG} for $E=TP$}
As in finite dimensions 
(see %\cite{Ma} 
\cite[\S 3.2]{Ma05}), there is a natural right  action of $G$ on $TP$ by $(X,g)\in T_uP\times G\mapsto TR_g(X)$, 
where $R_g$ is the right translation by $g$ on $P$. 
This action satisfies  the assumptions \eqref{H1}--\eqref{H2} above. 
%(1) and (2) above. 
Indeed \eqref{H1} % (1) 
is obvious and for \eqref{H2} %(2) 
we choose any open set $U$ such that $P_U$ can be identified with $U\times G$. 
Then, if  $\mathfrak{g}$ is the Lie algebra of $G$, the tangent bundle  $TP$ can be identified with $(U\times \mathbb{M})\times( G\times \mathfrak{g})$  over $U\times G$. 
Then $X\in T_{(x,\g)}P$ can be written as $(x,\g,\dot{x},\mathfrak{X})$, and $R_g(x,\g)=(x,\g.g)$. 
Hence 
\begin{eqnarray}\label{TRg}
TR_g(X)\equiv (x,\g.g,\dot{x},\Ad_{g^{-1}}\mathfrak{X})
\end{eqnarray}
which clearly implies assumption~\eqref{H2}. 

%\begin{remark}\label{verticalP} If we consider the vertical subbundle $VP\ap M$ of $TP\ap M$, then as in finite  dimension the action of $G$ on $TP$ induces an action of $G$ on $VP$ and this action also satisfies the previous  assumptions (1) and (2) ({\it cf.} \cite{Ma})
%\end{remark}

Therefore applying Proposition \ref{EG} to $TP$ we get the following diagram
\[\begin{tikzcd}
TP \arrow[r, "q"] \arrow[d,"\tau"]
&TP/G \arrow[d,"\hat{\tau}"] \\
 P  \arrow[r, "\pi"] 
 &M
\end{tikzcd}
\]
The vector bundle $TP/G$ is known as the  {\bf Atiyah bundle} and was firstly introduced by Atiyah in \cite{At}.
Since $TP\ap P$ is the pullback of $TP/G\ap M$ each local or global section of this last bundle 
gives rise to a  local or global pull-back section of  $TP\ap P$ which is $G$-invariant and conversely.
Therefore the $G$ invariant local or global vector fields on $P$ can be identified with sections of $TP/G\ap M$. 

Now recall that the gauge groupoid of $P$  is the quotient set $(P\times P)/G$ (see Example \ref{gauge}). 
Since any  invariant local vector fields on $P$ can be identified with local   sections of $TP/G\ap M$, 
from the construction of the algebroid of a groupoid, 
it follows that the Lie algebroid associated to the groupoid $(P\times P)/G$ is exactly $TP/G\ap M$ 
and the associated anchor will denoted $\rho:TP/G\ap TM$ in the sequel. 
In fact $\rho$ is induced from the canonical map $T\pi:TP\ap TM$ in restriction to $G$-invariant vectors 
and so $\rho$ is surjective.

 On the one hand  consider the vertical  subbundle $VP\ap M$ of $TP\ap M$ that   is the kernel of $T\pi$. 
%In fact $VP\ap M$ is a weak Banach subbundle of $TM\ap M$\footnote{ a vector subbundle $\pi_F:F\ap M$ 
%of a Banach bundle $\pi:E\ap M$ is a weak Banach subbundle of $\pi:E\ap M$ 
%if each fiber $F_x=\pi_F^{-1}(x)\subset E_x$ is a Banach subspace of $E_x$ for each $x\in M$}.  
 Since $\pi\circ R_g=\pi$ for any $ g\in G$ it follows that $VP$ is $G$-invariant.

 On the other hand  
 as in finite dimension by formally same arguments we can show that  $VP$  is isomorphic to  $P\times \mathfrak{g}$. 
 (See for instance the proof of \cite[Prop. 3.2.2]{Ma05}.)
  Now according to \eqref{TRg}  we can identify $VP$ with $P\times\mathfrak{g}$ with the action $(x,\g,\mathfrak{X})g=(x,\g.g,\Ad_{g^{-1}}\mathfrak{X})$ and so the assumption of Proposition \ref{EG} are satisfied for $E=P\times \mathfrak{g}$ we get a  Banach bundle $(P\times \mathfrak{g})/G\ap M$ which is in fact a subbundle of $TP/G\ap M$ which is the kernel of $\rho$. Note that   $(P\times \mathfrak{g})$  provided with    the induced bracket $[\cdot,\cdot]$ of the Lie algebroid $TP/G$ is  a LAB. 
 We finally the following exact sequence of Banach bundles over $M$
$$\begin{tikzcd}%\label{atiyah}
       0\arrow{r} &(P\times \mathfrak{g})/G\arrow{r}{j} & TP/G\arrow{r}{\rho} & TM\arrow{r} & 0
\end{tikzcd}$$
which is called the {\bf Atiyah sequence} of $P$.

%%%%%%%%%%%%%%%%%%%%%%%%%%%%%%%%%%%%%%%%%%%%%%%%%%%%%%%%
\section{Perspectives on generalized inverses in Banach algebras
	%$C^*$-algebras
}\label{Sect7}
%%%%%%%%%%%%%%%%%%%%%%%%%%%%%%%%%%%%%%%%%%%%%%%%%%%%%%%%

In this final section of the present paper we show that the preceding theory of Banach-Lie groupoids sheds fresh light on  the generalized inverses in Banach algebras, in particular on Moore-Penrose inverses in $C^*$-algebras, a research area that has been rather active. 
From the extensive literature that is available, the most relevant references for our present paper include 
\cite{HM92}, \cite{Ko01}, \cite{AC04}, \cite{ACM05}, \cite{Boa06}, \cite{ArCG08}, \cite{LeRo12}, \cite{ArM13}. 

We begin by a general construction of groupoids associated to semigroups, which we will afterwards specialize to the multiplicative semigroups underlying the associative Banach algebras.

\subsection{Groupoids associated to semigroups}
%%%%%%%%%%%%%%%%%%%%%%%%%%%%%%%%%%%%%%%%%%%%%%%%%%%%%%%%%%%

\begin{lemma}\label{S1}
Let $(A,\cdot)$ be an arbitrary semigroup 
and define 
$$%\begin{aligned}
Q(A):=\{a\in A\mid a^2=a\} %\\
\text{ and }
\Gc(A):=\{(a,b)\in A\times A\mid aba=a,\ bab=b\}.
%\end{aligned}
$$ 
Then one has a groupoid $\Gc(A)\tto Q(A)$ with its source/target maps 
$${\bf s},{\bf t}\colon \Gc(A)\to Q(A),\quad 
{\bf s}(a,b):=ba,\ {\bf t}(a,b):=ab$$ 
with its multiplication $(a_1,b_1)\cdot(a_2,b_2):=(a_1a_2,b_2b_1)$ if ${\bf s}(a_1,b_1)={\bf t}(a_2,b_2)$, 
and with its inversion map $(a,b)\mapsto(b,a)$. 
\end{lemma}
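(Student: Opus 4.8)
The plan is to verify directly that the proposed structure maps satisfy the groupoid axioms (G1)--(G4) from the preliminaries. Everything reduces to elementary semigroup manipulations using the two defining relations $aba=a$ and $bab=b$, so I will organize the verification axiom by axiom rather than grinding through each identity at once.

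First I would check that the source and target maps land in $Q(A)$: if $(a,b)\in\Gc(A)$ then $(ba)^2=b(aba)=ba$ and $(ab)^2=a(bab)=ab$, so indeed ${\bf s}(a,b),{\bf t}(a,b)\in Q(A)$. Next I would check that the multiplication is well defined, i.e.\ that if ${\bf s}(a_1,b_1)={\bf t}(a_2,b_2)$, that is $b_1a_1=a_2b_2$, then $(a_1a_2,b_2b_1)\in\Gc(A)$. Here is the key computation: using $b_1a_1=a_2b_2$ one gets
\[
(a_1a_2)(b_2b_1)(a_1a_2)=a_1(a_2b_2)(b_1a_1)a_2=a_1(b_1a_1)(a_2b_2)a_2=(a_1b_1a_1)(a_2b_2a_2)=a_1a_2,
\]
where I used that the elements $b_1a_1$ and $a_2b_2$ coincide and hence certainly commute. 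The symmetric computation gives $(b_2b_1)(a_1a_2)(b_2b_1)=b_2b_1$. So the multiplication is well defined into $\Gc(A)$. Associativity of ${\bf m}$ follows from associativity in $A$ once one checks that the composability conditions match up on both sides; I would verify that $\big((a_1,b_1)(a_2,b_2)\big)(a_3,b_3)$ is defined exactly when $(a_1,b_1)\big((a_2,b_2)(a_3,b_3)\big)$ is, using the identities $b_1a_1=a_2b_2$ and $b_2a_2=a_3b_3$ together with the fact (just established) that ${\bf s}$ of a product equals ${\bf s}$ of its right factor and ${\bf t}$ of a product equals ${\bf t}$ of its left factor, and then both sides equal $(a_1a_2a_3,b_3b_2b_1)$.

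For the identity section I would note that for $e\in Q(A)$ one has $(e,e)\in\Gc(A)$ since $e\cdot e\cdot e=e$, and ${\bf s}(e,e)={\bf t}(e,e)=e$, so $\1\colon Q(A)\to\Gc(A)$, $e\mapsto(e,e)$, is the candidate identity section. Given $(a,b)\in\Gc(A)$ with ${\bf s}(a,b)=ba=:e$, the product $(a,b)\cdot(e,e)$ is defined and equals $(ae,eb)=(aba,bab)=(a,b)$; symmetrically $(ab,ab)\cdot(a,b)=(aba,bab)=(a,b)$, giving the unit laws in (G3). Finally, for inversion: $(a,b)\in\Gc(A)$ clearly implies $(b,a)\in\Gc(A)$ by the symmetry of the defining relations, ${\bf i}$ is an involution, and $(a,b)(b,a)$ is defined because ${\bf s}(a,b)=ba={\bf t}(b,a)$, yielding $(ab,ab)=\1_{{\bf t}(a,b)}$, while $(b,a)(a,b)=(ba,ba)=\1_{{\bf s}(a,b)}$. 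This establishes (G4) and completes the verification. Since the statement is purely algebraic (no topology is asserted in this lemma), I do not expect any genuine obstacle; the only point requiring a moment's care is the well-definedness of ${\bf m}$ and the matching of composability conditions for associativity, which is exactly the spot where the relation $b_1a_1=a_2b_2$ must be used to commute the relevant idempotent-like factors past each other.
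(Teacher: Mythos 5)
Your proposal is correct and follows essentially the same route as the paper's proof: a direct verification by elementary semigroup manipulation, with the same key computation showing that $(a_1a_2)(b_2b_1)(a_1a_2)=a_1a_2$ via the relation $b_1a_1=a_2b_2$ and the idempotency/defining relations. The only difference is that you also spell out associativity, the unit laws and inversion, which the paper treats as routine and omits.
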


\begin{proof}
We only need to check that the source/target maps and the multiplication indeed take values in $Q(A)$ and $\Gc(A)$, respectively. 
For the source and target maps we note that if $(a,b)\in A\times A$ satisfy $aba=a$ and $bab=b$, then $(ab)^2=ab$ and $(ba)^2=ba$, 
hence $ab,ba\in Q(A)$. 

For the multiplication, the condition ${\bf s}(a_1,b_1)={\bf t}(a_2,b_2)$ is equivalent to $b_1a_1=a_2b_2$, 
and then one obtains $(a_1a_2,b_2b_1)\in\Gc(A)$ since 
$$(a_1a_2)(b_2b_1)(a_1a_2)=a_1a_2b_2(a_2b_2)a_2=a_1(a_2b_2)^2 a_2=a_1(a_2b_2)a_2=a_1a_2$$
where the first equality follows by $b_1a_1=a_2b_2$, the third equality follows by 
the property $(a_2b_2)^2=a_2b_2$ (which is a consequence of $(a_2,b_2)\in\Gc(A)$ as we have already seen above), 
and the fourth equality follows by $(a_2,b_2)\in\Gc(A)$. 
%One can similarly check that 
\end{proof}

\begin{lemma}\label{S2}
Let $A$ be a $*$-semigroup 
and define 
$$
P(A):=\{a\in Q(A)\mid a^*=a\} \text{ and }
\Jc(A):=\{a\in A \mid aa^*a=a\}.$$ 
Then one has a groupoid $\Jc(A)\tto P(A)$ with its source/target maps 
$${\bf s},{\bf t}\colon \Jc(A)\to P(A),\quad 
{\bf s}(a):=a^*a,\ {\bf t}(a):=aa^*$$ 
with its multiplication obtained as the restriction of the multiplication of~$A$ 
and with its inversion map $a\mapsto a^*$. 
Moreover there is the injective morphism of groupoids $\Jc(A)\to\Gc(A)$, $a\mapsto (a,a^*)$. 
\end{lemma}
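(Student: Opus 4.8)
The plan is to follow the pattern of the proof of Lemma~\ref{S1}: reduce everything to the statement that the proposed source, target and multiplication maps take values in the prescribed sets, after which the groupoid axioms are inherited from the associativity of $A$ and from the identity $aa^*a=a$ defining $\Jc(A)$. Throughout I would use only the two axioms of a $*$-semigroup, namely $a^{**}=a$ and $(ab)^*=b^*a^*$.

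First I would dispose of the elementary facts. For $e\in P(A)$ one has $ee^*e=e^3=e$, so $P(A)\subseteq\Jc(A)$, and $\mathbf{s}(e)=e^*e=e=ee^*=\mathbf{t}(e)$; this both makes sense of $\mathbf{s},\mathbf{t}$ on $P(A)$ and identifies the identity section as $\mathbf{1}_e=e$. For $a\in\Jc(A)$ the elements $a^*a$ and $aa^*$ are self-adjoint by $a^{**}=a$, and idempotent because $(a^*a)^2=a^*(aa^*a)=a^*a$ and $(aa^*)^2=(aa^*a)a^*=aa^*$; hence $\mathbf{s}$ and $\mathbf{t}$ indeed land in $P(A)$. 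Applying $*$ to $aa^*a=a$ yields $a^*aa^*=a^*$, so $a\mapsto a^*$ preserves $\Jc(A)$, and one reads off $\mathbf{s}(a^*)=aa^*=\mathbf{t}(a)$, $\mathbf{t}(a^*)=a^*a=\mathbf{s}(a)$, together with $aa^*=\mathbf{1}_{\mathbf{t}(a)}$ and $a^*a=\mathbf{1}_{\mathbf{s}(a)}$.

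The one computation that needs care is closure of the multiplication. Given $a,b\in\Jc(A)$ with $\mathbf{s}(a)=\mathbf{t}(b)$, i.e.\ $a^*a=bb^*$, I would compute
\[
(ab)(ab)^*(ab)=a(bb^*)(a^*a)b=a(bb^*)(bb^*)b=a(bb^*)b=a(bb^*b)=ab,
\]
feeding in the matching condition $a^*a=bb^*$ at the second equality, idempotency of $bb^*$ at the third, and $bb^*b=b$ at the last; thus $ab\in\Jc(A)$. The same kind of manipulation gives $\mathbf{s}(ab)=b^*(a^*a)b=b^*(bb^*)b=b^*b=\mathbf{s}(b)$ and $\mathbf{t}(ab)=a(bb^*)a^*=a(a^*a)a^*=aa^*=\mathbf{t}(a)$, so the source and target of a product are as required. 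Associativity of the partial product is inherited from $A$, and the unit laws $\mathbf{1}_{\mathbf{t}(a)}\,a=(aa^*)a=a$ and $a\,\mathbf{1}_{\mathbf{s}(a)}=a(a^*a)=a$ are just instances of $aa^*a=a$. This gives the groupoid $\Jc(A)\tto P(A)$.

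Finally I would check that $\theta\colon\Jc(A)\to\Gc(A)$, $a\mapsto(a,a^*)$, together with the inclusion $P(A)\hookrightarrow Q(A)$ on objects, is an injective morphism of groupoids. It is well defined because $aa^*a=a$ and $a^*aa^*=a^*$ (noted above) together say precisely that $(a,a^*)\in\Gc(A)$, and injectivity is immediate. Compatibility with the structure maps is then a short check: $\mathbf{s}_{\Gc}(a,a^*)=a^*a=\mathbf{s}_{\Jc}(a)$ and similarly for $\mathbf{t}$; the composability conditions agree since each amounts to $a^*a=bb^*$, and then $\theta(a)\theta(b)=(ab,b^*a^*)=(ab,(ab)^*)=\theta(ab)$; $\theta(a)^{-1}=(a^*,a)=\theta(a^*)$; and $\theta(e)=(e,e^*)=(e,e)$ is the identity arrow at $e$ in $\Gc(A)$ for $e\in P(A)$. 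I do not expect a genuine obstacle anywhere; the only mildly delicate point is inserting the matching condition correctly in the multiplication-closure computation above.
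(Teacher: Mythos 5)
Your proof is correct; I checked the closure computation $(ab)(ab)^*(ab)=ab$ under the matching condition $a^*a=bb^*$, the idempotency and self-adjointness of $a^*a$ and $aa^*$, the unit and inversion laws, and the compatibility of $a\mapsto(a,a^*)$ with source, target, composability, multiplication, inversion and identities, and each step uses only $a^{**}=a$, $(ab)^*=b^*a^*$ and associativity, as you intend. However, your route differs from the paper's: the paper does not verify the groupoid axioms at all, but simply invokes a known theorem on $*$-semigroups (Lawson, \emph{Inverse semigroups}, \S 4.2, Th.~3) to get the groupoid $\Jc(A)\tto P(A)$, and then declares the morphism $a\mapsto(a,a^*)$ to be a straightforward check. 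What your argument buys is a fully self-contained, elementary verification in the same spirit as the paper's own proof of Lemma~\ref{S1} (reduce everything to showing the structure maps land in the right sets, then inherit associativity from $A$); what the paper's citation buys is brevity and a pointer to the broader inverse-semigroup context in which this groupoid of ``partial isometries'' is a standard object. Either proof is acceptable; if you want to match the paper, you could replace the axiom-checking portion by the reference and keep only your explicit verification of the morphism claim.
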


\begin{proof}
The structure $\Jc(A)\tto P(A)$ is a groupoid by \cite[\S 4.2, Th. 3]{Lw98}, 
and it is straightforward that the map $a\mapsto (a,a^*)$ is an injective morphism of groupoids. 
\end{proof}

\subsection{Generalized inverses and groupoids associated to Banach algebras}

For any associative algebra $A$, an element $a\in A$ is called \emph{regular} if $a\in aAa$, and if this is the case 
then every element $b\in A$ with $a=aba$ is called a \emph{generalized inverse} of $a$. 
The generalized inverse of a regular element is not uniquely determined in general, and it is therefore difficult to extend the classical continuity and differentiability properties of the inversion mapping from invertible elements to regular elements. 

A way out of the above difficulty is to regard $A$ as a multiplicative semigroup, and to consider the set $\Gc(A)$ from Lemma~\ref{S1}, 
that is, the set of all pairs of regular elements $(a,b)\in A\times A$ for which $b$ is a generalized inverse of~$a$ and $a$ is a generalized inverse of~$b$. 
When $A$ is a Banach algebra, the differential geometry of the set $\Gc(A)$ was investigated in \cite{ACM05}, 
and we will show below that the corresponding results have their natural place in the theory of Banach-Lie groupoids. 
To this end we specialize the construction of Lemma~\ref{S1} for the multiplicative semigroups 
defined by associative Banach algebras. 
See for instance \cite[App. A]{B06} and the references therein for real analytic mappings on Banach manifolds. 

\begin{theorem}\label{S3}
If $A$ is a unital associative Banach algebra, then $\Gc(A)\tto Q(A)$ is a real analytic Banach-Lie groupoid. 
Moreover, this groupoid is locally transitive and its isotropy group at $\1\in Q(A)$ is the Banach-Lie group $A^\times$ of invertible elements of~$A$. 
\end{theorem}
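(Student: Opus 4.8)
The plan is to read this statement off from the differential-geometric study of $\Gc(A)$ carried out in \cite{ACM05}, recast in the groupoid language set up above, and then to extract local transitivity and the description of the isotropy group by direct computation. I would begin with the base: $Q(A)$ is a closed subset of $A$ on which $A^\times$ acts by conjugation $g\cdot p:=gpg^{-1}$, and the orbit $\Oc_p=\{gpg^{-1}\mid g\in A^\times\}$ of each $p$ is simultaneously open and closed in $Q(A)$. The underlying mechanism is the classical fact that for $p,p'\in Q(A)$ with $\|p-p'\|$ small the element $z(p',p):=p'p+(1-p')(1-p)$ is invertible, depends polynomially on $(p',p)$, and satisfies $z(p',p)\,p\,z(p',p)^{-1}=p'$; this both shows that $\Oc_p$ is open and gives an analytic local section of the orbit map $A^\times\to\Oc_p$, whence $\Oc_p\cong A^\times/G_p$ with $G_p=\{g\in A^\times\mid gp=pg\}$ a Banach-Lie subgroup of $A^\times$ (this is precisely the framework of \cite[Th.~4.19]{B06}). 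Consequently $Q(A)$ is a Hausdorff real analytic Banach manifold with $T_pQ(A)=pA(1-p)\oplus(1-p)Ap$ inside $A$; all of this is contained in \cite{ACM05}.

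Next I would produce analytic charts on $\Gc(A)$ adapted to the source and target maps. Fix $(a_0,b_0)\in\Gc(A)$ and set $p_0:=b_0a_0$, $q_0:=a_0b_0$; one checks $a_0\in q_0Ap_0$, $b_0\in p_0Aq_0$, $a_0b_0=q_0$, $b_0a_0=p_0$, so that $a_0$ is an invertible element of the ``corner'' between $p_0A$ and $q_0A$. For $(a,b)\in\Gc(A)$ near $(a_0,b_0)$ one has $p:=ba$ near $p_0$ and $q:=ab$ near $q_0$; writing $z:=z(p,p_0)$ and $w:=z(q,q_0)$, one verifies that $\tilde a:=w^{-1}az\in q_0Ap_0$, $\tilde b:=z^{-1}bw\in p_0Aq_0$, that $\tilde a\tilde b=q_0$ and $\tilde b\tilde a=p_0$, and hence that $\tilde w:=\tilde ab_0$ lies in the group $(q_0Aq_0)^\times$ of invertible elements of the Banach algebra $q_0Aq_0$, close to its unit $q_0$. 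Conversely, the formulas $\tilde a:=\tilde w a_0$, $\tilde b:=b_0\tilde w^{-1}$, $a:=z(q,q_0)\,\tilde a\,z(p,p_0)^{-1}$, $b:=z(p,p_0)\,\tilde b\,z(q,q_0)^{-1}$ recover an element of $\Gc(A)$ from a triple $(p,q,\tilde w)$. This identifies a neighbourhood of $(a_0,b_0)$ in $\Gc(A)$ with an open subset of $T_{p_0}Q(A)\oplus T_{q_0}Q(A)\oplus q_0Aq_0$; since every map occurring is built from the bounded bilinear multiplication of $A$, from inversion in $A^\times$ and in $(q_0Aq_0)^\times$, and from the polynomial $z(\cdot,\cdot)$, the chart and its inverse are real analytic, so $\Gc(A)$ is a real analytic Banach manifold (Hausdorff, being a subspace of $A\times A$) and the transition maps are analytic. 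In these coordinates ${\bf s}(a,b)=p$ and ${\bf t}(a,b)=q$ are analytic coordinate projections, so ${\bf s}$ (and ${\bf t}={\bf s}\circ{\bf i}$) is a submersion; ${\bf i}(a,b)=(b,a)$ is analytic (in a chart it sends $(p,q,\tilde w)$ to $(q,p,\cdot)$ with the last slot an analytic function of $\tilde w$) and equals its own inverse, hence is an analytic diffeomorphism; and ${\bf m}(a_1,b_1,a_2,b_2)=(a_1a_2,b_2b_1)$ is the restriction to $\Gc(A)^{(2)}$ of an analytic map on $A^4$. Thus $\Gc(A)\tto Q(A)$ satisfies (BLG1)--(BLG4), i.e.\ it is a real analytic Banach-Lie groupoid.

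It then remains to settle the two additional assertions. For \emph{local transitivity}: given $(a,b)\in\Gc(A)$ with ${\bf s}(a,b)=p$, ${\bf t}(a,b)=q$ and $q'\in Q(A)$ with $\|q'-q\|$ small, put $z:=z(q',q)$; a direct computation using $aba=a$ and $bab=b$ shows $(za,bz^{-1})\in\Gc(A)$, with source $bz^{-1}za=ba=p$ and target $zabz^{-1}=zqz^{-1}=q'$, so $q'\in\Gc(A).p$. Hence each orbit $\Gc(A).p$ is open in $Q(A)$, which is exactly local transitivity. For the \emph{isotropy group at $\1$}: $\Gc(A)(\1)=\{(a,b)\in\Gc(A)\mid ba=\1=ab\}$, and the two equalities $ba=\1=ab$ say precisely that $a\in A^\times$ and $b=a^{-1}$ (the relations $aba=a$, $bab=b$ then being automatic); thus $(a,b)\mapsto a$ is a group isomorphism of $\Gc(A)(\1)$ onto $A^\times$, and in the chart above $\Gc(A)(\1)$ corresponds to $\{(\1,\1)\}\times A^\times$, an open subset of $A$, so this isomorphism is a real analytic diffeomorphism. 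Hence the isotropy group at $\1$ is the Banach-Lie group $A^\times$, as claimed.

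I expect the main obstacle to be the construction and verification of the charts on $\Gc(A)$ in the second step — in particular checking that $\tilde a=w^{-1}az$ really lands in the fixed model corner $q_0Ap_0$ and is invertible there, and that ${\bf s}$ comes out as an honest coordinate projection — since this is where the interplay between the two moving idempotents $ba$ and $ab$ and the fixed corner algebra $q_0Aq_0$ has to be controlled; once these charts are in hand the groupoid axioms and the two extra statements follow easily, and, as noted, most of this analytic work is already available in \cite{ACM05}.
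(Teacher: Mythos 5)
Your proof is correct, but it takes a genuinely different route from the paper's. The paper's proof is essentially a citation argument: it invokes \cite{ACM05} (Cor.\ 1.3, Th.\ 1.5, Prop.\ 1.12) and \cite{CPR90} (Cor.\ 1.5, Th.\ 2.1) for the facts that $Q(A)$ and $\Gc(A)$ are real analytic split submanifolds of $A$ and $A\times A$ and that $({\bf s},{\bf t})$ is a submersion, gets analyticity of the structure maps from \cite[5.8.5]{Bo09}, and deduces local transitivity from Remark~\ref{15Aug2016} together with Theorem~\ref{transitive}. You instead build everything by hand: the conjugation element $z(p',p)=p'p+(\1-p')(\1-p)$ gives the homogeneous-space structure on $Q(A)$, and your corner-algebra chart $(a,b)\leftrightarrow(p,q,\tilde w)\in Q(A)\times Q(A)\times(q_0Aq_0)^\times$ simultaneously exhibits the analytic manifold structure of $\Gc(A)$ and turns $({\bf s},{\bf t})$ into a literal coordinate projection, so the submersion property and local transitivity drop out at once. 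The verifications you flag as the main obstacle do go through: $q_0\tilde a=\tilde a=\tilde a p_0$ follows from $q_0w^{-1}=w^{-1}q$, $zp_0=pz$ and $aba=a$, and $\tilde w=\tilde a b_0$ has inverse $a_0\tilde b$ in $q_0Aq_0$. What your approach buys is self-containedness, an explicit local model, and a transparent treatment of the isotropy group at $\1$ (a point the paper's written proof actually leaves implicit); what the paper's approach buys is brevity and the extra information that $\Gc(A)$ is a split submanifold of $A\times A$ --- your charts endow $\Gc(A)$ with an intrinsic analytic structure compatible with the subspace topology but you do not verify that $T_{(a,b)}\Gc(A)$ splits in $A\times A$, which is however not required by the groupoid axioms as stated.
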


\begin{proof}
It follows by \cite[Cor. 1.3 and Th. 1.5]{ACM05} and \cite[Cor. 1.5]{CPR90} that both 
$\Gc(A)$ and $Q(A)$ are real analytic submanifolds of $A$, 
and in particular their tangent spaces at any point are 
%complemented 
split closed linear subspaces of $A$. 
Since the multiplication map $A\times A\to A$, $(a,b)\mapsto ab$, 
is clearly real analytic, 
it then follows by \cite[5.8.5]{Bo09} that the structure maps of the groupoid $\Gc(A)\tto Q(A)$ 
are real analytic. 

Moreover, the map $({\bf s},{\bf t}):\Gc(A)\to  P(A)\times P(A)$ is a submersion by 
\cite[Prop. 1.12]{ACM05} and \cite[Th. 2.1]{CPR90}, 
hence by Remark~\ref{15Aug2016} we obtain that $\Gc(A)\tto P(A)$ is a locally transitive Banach-Lie groupoid. 
\end{proof}

\begin{remark}%{problem}
	\label{S3.5}
\normalfont
From the perspective of Theorem~\ref{S3}, it is natural to ask if, in the case when $A$ is endowed with a continuous involution, 
the corresponding groupoid $\Jc(A)\tto P(A)$ given by Lemma~\ref{S2} 
is a Banach-Lie groupoid and moreover if it is a Banach-Lie subgroupoid of $\Gc(A)\tto Q(A)$. 
For general associative Banach $*$-algebras, it is not difficult to check that $P(A)$ is a submanifold of $Q(A)$,  
since it is the fixed-point set of the involutive diffeomorphism $a\mapsto a^*$ of $Q(A)$. 
However it is less clear how $\Jc(A)$ should be given a manifold structure with respect to which the source/target maps of the groupoid $\Jc(A)\tto P(A)$ would be submersions. 

We will see below that the above questions can be satisfactorily answered in the important case of $C^*$-algebras, 
but it would be interesting to understand what happens for other important examples of  associative Banach $*$-algebras, 
as for instance the restricted Banach algebra related to the restricted Grassmann manifold from \cite[Sect. 6.2]{PS86}. 
(See also \cite[Sect. 6]{BTR07} for the contrast between the restricted Banach algebra and the $C^*$-algebras.)
\end{remark}%{problem}

\subsection{The special case of $C^*$-algebras}

The following definition was suggested by \cite{OS11} and is a specialization of the construction from Lemma~\ref{S2}. 
(We recall that in the special case of the matrix algebra $\Ag=M_n(\CC)$ or bounded operators on Hilbert space $\Ag=B(\Hc)$ there is a one-to-one correspondence from the orthogonal projections $p=p^2=p^*\in\Ag$ onto the linear subspaces of~$\CC^n$ or $\Hc$, and this why the set of orthogonal projections defined below for any $C^*$-algebra~$\Ag$ is called the Grassmann manifold associated with~$\Ag$, see also e.g. \cite{PoRe87}.)
We slightly change the notation of Lemma~\ref{S2} in order to emphasize the importance of the class of $C^*$-algebras 
among the Banach $*$-algebras. 

\begin{definition}\label{def1}
\normalfont
For any $C^*$-algebra $\Ag$ 
we introduce the following subsets: 
\begin{itemize}
\item $\Pc(\Ag):=\{p\in\Ag\mid p=p^2=p^*\}$ (the \emph{Grassmann manifold} of $\Ag$); 
\item $\Uc(\Ag):=\{a\in\Ag\mid aa^*,a^*a\in\Pc(\Ag)\}$ (the set of \emph{partial isometries} in $\Ag$). 
\end{itemize}
The \emph{groupoid associated to $\Ag$} is $\Uc(\Ag)\tto\Pc(\Ag)$ with the following structure maps: 
\begin{itemize}
\item the target/source maps ${\bf t},{\bf s}\colon \Uc(\Ag)\to\Pc(\Ag)$, ${\bf t}(a)=aa^*$, ${\bf s}(a)=a^*a$; 
\item the inversion map ${\bf i}\colon\Uc(\Ag)\to\Uc(\Ag)$, ${\bf i}(a)=a^*$; 
\item the composition defined on $\Uc(\Ag)^{(2)}:=\{(a,b)\in\Uc(\Ag)\times\Uc(\Ag)\mid {\bf s}(a)={\bf t}(b)\}$ by $\mu\colon\Uc(\Ag)^{(2)}\to\Uc(\Ag)$, $\mu(a,b):=ab$. 
\end{itemize}

If $\Bg$ is another $C^*$-algebra and $\varphi\colon\Ag\to\Bg$ is a $*$-morphism, 
then $\Uc(\varphi):=\varphi\vert_{\Uc(\Ag)}$. 
\end{definition}

\begin{theorem}\label{S4}
For any unital $C^*$-algebra $\Ag$ its corresponding groupoid $\Uc(\Ag)\tto\Pc(\Ag)$ is a real analytic Banach-Lie groupoid. 
Moreover, this groupoid is locally transitive. 
The isotropy group of the above groupoid at $\1\in\Pc(\Ag)$ is the Banach-Lie group of unitary elements of~$\Ag$. 
\end{theorem}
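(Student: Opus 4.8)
The plan is to follow the same strategy as the proof of Theorem~\ref{S3}, replacing the manifolds of idempotents and generalized-inverse pairs by their $C^*$-algebraic counterparts: the Grassmann manifold $\Pc(\Ag)$ of orthogonal projections and the set $\Uc(\Ag)$ of partial isometries. First I would invoke the known results on the real-analytic submanifold structure of these sets inside $\Ag$. The Grassmann manifold $\Pc(\Ag)$ is a real analytic submanifold of $\Ag$ whose tangent space at any point is a split closed subspace; this is classical (it is the orbit of the unitary group acting by conjugation, and can be found e.g. in \cite{PoRe87} or \cite{CPR90}). Likewise, $\Uc(\Ag)$, the set of partial isometries, is a real analytic submanifold of $\Ag$ with split closed tangent spaces --- this follows from the fact that $\Uc(\Ag)$ is a single real-analytic constraint set $\{a\mid aa^*a=a\}$ and the standard analysis of this equation (see \cite{ACM05} and the references therein, or directly the polar-decomposition parametrization of partial isometries). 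I would cite these facts rather than reprove them.

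Next, since the structure maps of the groupoid $\Uc(\Ag)\tto\Pc(\Ag)$ --- namely ${\bf t}(a)=aa^*$, ${\bf s}(a)=a^*a$, ${\bf i}(a)=a^*$, and the multiplication $\mu(a,b)=ab$ --- are all restrictions of real analytic maps on $\Ag$ (the multiplication $A\times A\to A$ and the involution being continuous, hence real analytic, since they are bounded bilinear resp. bounded conjugate-linear), it follows by \cite[5.8.5]{Bo09} that these maps are real analytic as maps between the submanifolds, once one knows that they do take values in the appropriate submanifolds --- but that is exactly the content of Lemma~\ref{S2} (which is purely algebraic and already established). This gives the groupoid axioms (BLG1), (BLG3), (BLG4) in the real analytic category, and in particular in the smooth category. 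It remains to verify (BLG2), that ${\bf s}$ is a submersion; by Remark~\ref{15Aug2016} it suffices to show that the combined map $({\bf s},{\bf t})\colon\Uc(\Ag)\to\Pc(\Ag)\times\Pc(\Ag)$ is a submersion, and this, together with local transitivity, will then follow from Theorem~\ref{transitive} (specifically the equivalence \eqref{transitive_item1}$\Leftrightarrow$\eqref{transitive_item2}).

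The main obstacle, therefore, is to prove that $({\bf s},{\bf t})\colon\Uc(\Ag)\to\Pc(\Ag)\times\Pc(\Ag)$ is a submersion, i.e., that its differential is surjective with split kernel at every point $a\in\Uc(\Ag)$. I expect this to follow from known results in the literature on the geometry of partial isometries and Grassmannians in $C^*$-algebras; concretely, one can use that the unitary group $\Uc_\Ag$ of $\Ag$ acts on $\Uc(\Ag)$ by $u\cdot a\cdot v^*$ (left and right unitary multiplication) in a way that is transitive on the fibers of $({\bf s},{\bf t})$ over a fixed pair of projections whenever that fiber is nonempty, and the orbit map of a Banach-Lie group action into a Banach manifold of this type is a submersion onto its image by the homogeneous-space results of \cite{CPR90} (cf. also \cite[Th. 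I]{Gl15}). The splitting of the kernel of $T({\bf s},{\bf t})$ reduces to the splitting of the tangent space of the fiber $\Uc(\Ag)(p,q)=\{a\in\Uc(\Ag)\mid a^*a=p,\ aa^*=q\}$, which is the homogeneous space $\Uc_\Ag/(\text{stabilizer})$ and hence a split submanifold by the theory of reductive homogeneous spaces of $C^*$-algebra unitary groups. Once $({\bf s},{\bf t})$ is shown to be a surjective-onto-its-image submersion with split kernel, Theorem~\ref{transitive} yields that $\Uc(\Ag)\tto\Pc(\Ag)$ is a locally transitive (split) Banach-Lie groupoid. Finally, the isotropy group at $\1\in\Pc(\Ag)$ is $\Uc(\Ag)(\1)=\{a\in\Ag\mid a^*a=aa^*=\1\}$, which is precisely the unitary group of $\Ag$, and by Theorem~\ref{5.4}\eqref{smooth4_item1} this is a Banach-Lie group --- indeed the standard real analytic Banach-Lie group of unitaries; this completes the proof.
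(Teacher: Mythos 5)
Your proposal is correct and follows essentially the same route as the paper's proof: submanifold structures of $\Uc(\Ag)$ and $\Pc(\Ag)$ taken from the literature (the paper cites \cite[Prop. 3.3]{ACM05} and \cite[(3)]{PoRe87}), real analyticity of the structure maps via \cite[5.8.5]{Bo09}, the submersion property of $({\bf s},{\bf t})$ from the literature (the paper cites \cite[Prop. 3.4]{ACM05}) combined with Remark~\ref{15Aug2016} and Theorem~\ref{transitive} to get local transitivity, and the identification of the isotropy group at $\1$ with the unitary group. The only divergence is that where you sketch a unitary-action/homogeneous-space argument for the submersion of $({\bf s},{\bf t})$ (a sketch that settles fiber transitivity and the split kernel but not the surjectivity of the tangent map onto $T\Pc(\Ag)\times T\Pc(\Ag)$), the paper simply invokes \cite[Prop. 3.4]{ACM05}, which is also the fallback you indicate.
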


\begin{proof}
It follows by \cite[Prop. 3.3]{ACM05} and \cite[(3)]{PoRe87} that 
both $\Uc(\Ag)$ and $\Pc(\Ag)$ are real analytic submanifolds of $\Ag$, 
and in particular their tangent spaces at any point are 
%complemented 
split closed linear subspaces of $\Ag$. 
Since the multiplication map $\Ag\times\Ag\to\Ag$, $(a,b)\mapsto ab$, 
and the adjoint $\Ag\to\Ag$, $a\mapsto a^*$, 
are clearly real analytic, 
it then follows by \cite[5.8.5]{Bo09} that the structure maps of the groupoid $\Uc(\Ag)\tto\Pc(\Ag)$ 
are real analytic. 

Moreover, the map $({\bf s},{\bf t}):\Uc(\Ag)\to  \Pc(\Ag)\times \Pc(\Ag)$ is a submersion by \cite[Prop. 3.4]{ACM05}, 
hence by Remark~\ref{15Aug2016} we obtain that $\Uc(\Ag)\tto\Pc(\Ag)$ is a locally transitive Banach-Lie groupoid. 
%We should still check that    
%the map $r\colon \Uc(\Ag)\to\Pc(\Ag)$ is a submersion, in order to be able to prove  
%(using \cite[5.11.9]{Bo09}) 
%that the set of composable arrows $\Uc(\Ag)^{(2)}$ is a submanifold of $\Ag\times\Ag$.

Finally, the isotropy group of the groupoid $\Uc(\Ag)\tto\Pc(\Ag)$ at $\1\in\Pc(\Ag)$ is 
$$(\Uc(\Ag))(\1)=\{a\in\Uc(\Ag)\mid {\bf s}(a)={\bf t}(a)=\1\}
=\{a\in\Ag\mid a^*a=aa^*=\1\}$$
which is exactly the unitary group of~$\Ag$. 
Since $\Uc(\Ag)\tto\Pc(\Ag)$ is a Banach-Lie groupoid, it follows by Theorem~\ref{5.4}\eqref{smooth4_item1} that all its isotropy groups are Banach-Lie groups. 
However, in the special case of the unitary group of a $C^*$-algebra $\Ag$, it is well known that this is a Banach-Lie group. 
This follows for instance from the fact that the unitary group of $\Ag$ is an algebraic subgroup (of degree $\le 2$) of the group of invertible elements of~$\Ag$, hence one can use \cite[Th. 4.13 and Ex. 2.21]{B06}.
This completes the proof. 
\end{proof}

\begin{remark}
\normalfont
Let $\mathbb{CSTAR}$ be the category of $C^*$-algebras and $\mathbb{GRPD}$ be the category of Banach-Lie groupoids. 
Then it is easily checked that the correspondence 
$\Uc\colon\mathbb{CSTAR}\to\mathbb{GRPD}$ is a functor. 

Since the above functor takes values in the category of Banach-Lie groupoids, 
we can also compose it with the functor that associates to every Banach-Lie groupoid its 
Lie algebroid. 
In this connection we note that the differentiable structures of the source-fibers of the groupoid $\Uc(\Ag)$ 
were discussed in~\cite{AC04}. 
\end{remark}

\subsection{Moore-Penrose inverse in $C^*$-algebras}
The research on Moore-Penrose inverses  in $C^*$-algebras and even in more general Banach algebras 
has been rather active. 
%From the extensive literature that is available, the most relevant references for our present paper include 
%\cite{HM92}, \cite{Ko01}, \cite{AC04}, \cite{ACM05}, \cite{Boa06}, \cite{ArCG08}, \cite{LeRo12}, \cite{ArM13}. 
We will briefly discuss here the relation between some of these recent results and the theory of Banach-Lie groupoids that we developed in this paper. 
In particular, we show that a part of the operator theoretic research in this area can be cast in a natural way in the framework of groupoids. 

For the sake of simplicity we will discuss here Moore-Penrose invertibility only in $C^*$-algebras. 
If $\Ag$ is a unital $C^*$-algebra, its set of regular elements is denoted by 
$$\Ag^\dagger:=\{a\in\Ag\mid a\in a\Ag a\}.$$
It follows by \cite[Th. 6]{HM92} that $\Ag^\dagger$ is exactly the set of all $a\in\Ag$ for which there exists a \emph{Moore-Penrose inverse}, that is, a unique element $a^\dagger\in\Ag$ satisfying 
$$aa^\dagger a=a,\ a^\dagger aa^\dagger=a^\dagger,\ 
(a^\dagger a)^*=a^\dagger a,\ (aa^\dagger)^*=aa^\dagger.$$
It then follows that for every $a\in\Ag^\dagger$ one has $a^\dagger\in\Ag^\dagger$ and $(a^\dagger)^\dagger=a$. 
We recall from \cite[Ex. 1.1]{Ko01} that in general $\Ag^\dagger$ is \emph{not} an open subset of~$\Ag$. 
It is also known that although the mapping 
$\Ag^\dagger\to\Ag^\dagger$, $a\mapsto a^\dagger$, is well-defined, in general it is not continuous (cf. \cite[Sect. 3]{LeRo12}). 
Also, if $a,b\in\Ag^\dagger$ then we may have $ab\not\in\Ag^\dagger$. 
These observations show that both the algebraic and analytic structures of the set~$\Ag^\dagger$ are pathological in some sense, unlike the group $\Ag^\times$ of all invertible elements, which is always an open subset of $\Ag$ and is a Banach-Lie group. 

Despite the above aspects, it is clearly desirable  to have a framework in which  the Moore-Penrose inversion has better continuity and differentiability properties. 
One possible approach to that problem is to understand the  
relation between the Moore-Penrose inversion and the locally transitive Banach-Lie groupoids 
%$\Gc(\Ag)\tto Q(\Ag)$ and $\Uc(\Ag)\tto \Pc(\Ag)$ 
from Theorems \ref{S3}~and~\ref{S4}.  
To conclude this paper, we take a very first step in that direction. 

\begin{proposition}\label{final}
For every unital $C^*$-algebra $\Ag$ the following assertions hold: 

\begin{enumerate}[{\rm(i)}]
	\item\label{final_item1}
The mappings 
$\eta\colon \Ag^\dagger\to \Gc(\Ag)$, $a\mapsto (a,a^\dagger)$, 
and 
$\pi\colon \Gc(\Ag)\to \Ag^\dagger$, $(a,b)\mapsto a$, 
are well defined and $\pi\circ \eta=\id_{\Ag^\dagger}$. 
\item\label{final_item2} 
For any sequence $\{a_n\}_{n\ge 1}$ in $\Ag^\dagger\setminus\{0\}$ and any $a\in \Ag^\dagger\setminus\{0\}$ 
with $\lim\limits_{n\to\infty}a_n=a$ 
one has 
$\lim\limits_{n\to\infty}\eta(a_n)=\eta(a)$ if and only if 
$\lim\limits_{n\to\infty}{\bf s}(\eta(a_n))={\bf s}(\eta(a))$.  
\item\label{final_item3} 
One has $\Uc(\Ag)\subseteq \Ag^\dagger$ and the map $\eta\vert_{\Uc(\Ag)}\colon \Uc(\Ag)\to \Gc(\Ag)$ is an injective morphism of Banach-Lie groupoids. 
\end{enumerate}
\end{proposition}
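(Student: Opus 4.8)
The plan is to verify each of the three assertions in turn, using the characterizations of $\Gc(\Ag)$ and $\Uc(\Ag)$ established in Lemmas~\ref{S1} and \ref{S2} together with the defining identities of the Moore-Penrose inverse recalled just above the statement.

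For assertion \eqref{final_item1}, I would first note that if $a\in\Ag^\dagger$ then the four Moore-Penrose equations give in particular $aa^\dagger a=a$ and $a^\dagger aa^\dagger=a^\dagger$, which is exactly the condition $(a,a^\dagger)\in\Gc(\Ag)$ from Lemma~\ref{S1}; hence $\eta$ is well defined. The map $\pi$ is well defined because $(a,b)\in\Gc(\Ag)$ forces $aba=a$, so $a\in a\Ag a$, i.e.\ $a\in\Ag^\dagger$. The identity $\pi\circ\eta=\id_{\Ag^\dagger}$ is immediate from the formulas.

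For assertion \eqref{final_item2}, the ``only if'' direction is trivial since ${\bf s}\colon\Gc(\Ag)\to Q(\Ag)$ is continuous (indeed it is real analytic by Theorem~\ref{S3}) and ${\bf s}\circ\eta(a_n)={\bf s}(\eta(a_n))$, so $\eta(a_n)\to\eta(a)$ gives ${\bf s}(\eta(a_n))\to{\bf s}(\eta(a))$. The substance is the ``if'' direction: assuming $a_n\to a$ and $a_n^\dagger a_n\to a^\dagger a$, one must show $a_n^\dagger\to a^\dagger$. Here I would use the standard identity $a^\dagger=(a^\dagger a)a^\dagger=(a^\dagger a)(a^*a)^\dagger a^*$ together with the fact that $a^\dagger$ can be recovered from $a$ and the projection $a^\dagger a$ by $a^\dagger=(a^\dagger a)\,a^*\,(aa^*)^{-1}$ on the corner where $aa^*$ is invertible; more robustly, one uses $a^\dagger = \lim_{\varepsilon\downarrow 0}(a^*a+\varepsilon)^{-1}a^*$ and the continuity results for the Moore-Penrose inverse under convergence of the associated range/source projections from \cite{LeRo12} and \cite{Ko01}. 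Concretely: $a^\dagger = (a^\dagger a) a^\dagger$ and $a^\dagger a$ is the support projection of $a^*a$; writing $p_n:={\bf s}(\eta(a_n))=a_n^\dagger a_n$ and $p:=a^\dagger a$, from $p_n\to p$ and $a_n\to a$ one shows $a_n^\dagger = p_n a_n^*(a_na_n^*)^\dagger\to p a^*(aa^*)^\dagger = a^\dagger$ using that the reduced elements $a_np_n$ are bounded below on the relevant corners once the projections converge. This is the step I expect to be the main obstacle, since it is where the genuinely analytic (rather than purely algebraic) content of the Moore-Penrose inverse enters, and it is the reason the hypothesis $a\neq 0$ (and $a_n\neq 0$) is imposed so that one stays away from the degenerate point where all corners collapse.

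For assertion \eqref{final_item3}, the inclusion $\Uc(\Ag)\subseteq\Ag^\dagger$ follows because a partial isometry $a$ satisfies $aa^*a=a$ by Lemma~\ref{S2}, so $a\in a\Ag a$; moreover for $a\in\Uc(\Ag)$ one checks that $a^*$ satisfies all four Moore-Penrose equations (using $a^*a,aa^*\in\Pc(\Ag)$, so they are self-adjoint idempotents), whence $a^\dagger=a^*$ by uniqueness of the Moore-Penrose inverse. Therefore $\eta\vert_{\Uc(\Ag)}$ coincides with the map $a\mapsto(a,a^*)$, which is exactly the injective morphism of groupoids $\Jc(\Ag)\to\Gc(\Ag)$ from Lemma~\ref{S2}. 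It remains to observe that this map is smooth (indeed real analytic): $\Uc(\Ag)\tto\Pc(\Ag)$ and $\Gc(\Ag)\tto Q(\Ag)$ are real analytic Banach-Lie groupoids by Theorems~\ref{S4} and \ref{S3}, and $a\mapsto(a,a^*)$ is the restriction of a real analytic map $\Ag\to\Ag\times\Ag$ to the submanifold $\Uc(\Ag)$, so by \cite[5.8.5]{Bo09} it is a real analytic map into the submanifold $\Gc(\Ag)$; together with the compatibility with source, target, multiplication and inversion already recorded in Lemma~\ref{S2}, this gives that $\eta\vert_{\Uc(\Ag)}$ is an injective morphism of Banach-Lie groupoids, completing the proof.
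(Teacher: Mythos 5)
Your proof is correct and takes essentially the same route as the paper: assertion (i) is the same direct verification, assertion (ii) rests in the end on the continuity criterion for the Moore--Penrose inverse in \cite[Th.~1.6]{Ko01} (which is exactly what the paper cites), and assertion (iii) uses that partial isometries are regular with $a^\dagger=a^*$, together with the groupoid morphism of Lemma~\ref{S2} and smoothness via \cite[5.8.5]{Bo09}. Just note that your hands-on sketch for the ``if'' direction of (ii) is circular as written --- the convergence $(a_na_n^*)^\dagger\to(aa^*)^\dagger$ is essentially the statement to be proved --- so the appeal to \cite{Ko01} is the argument that actually carries that step, exactly as in the paper.
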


\begin{proof}
Assertion~\eqref{final_item1} is straightforward. 

Assertion~\eqref{final_item2} follows by \cite[Th. 1.6]{Ko01}. 
	
Assertion~\eqref{final_item3} follows by the well-known fact that every partial isometry in $\Ag$ is a regular element and, more exactly, one has $\Uc(\Ag)=\{a\in\Ag^\dagger\mid a^\dagger=a^*\}$. 
This completes the proof. 
\end{proof}

\subsection*{Acknowledgment}

We wish to thank Anatol Odzijewicz and Aneta Sli\.zewska for their helpful suggestions at the beginning of this project. 
We extend our thanks to the Referee for numerous suggestions that greatly helped us improve the exposition and even some results of our paper. 
The authors D.~Belti\c t\u a and F.~Pelletier also acknowledge financial support from the Centre Francophone en Math\'ematiques de Bucarest and the GDRI ECO-Math.

%\vfill
%\eject

\end{document}